 \renewcommand{\bcube}{{\overline{\square}}}
\let\Pro=\relax
\let\PSh=\relax
\let\CI=\relax
\let\ch=\relax
\DeclareMathOperator{\CDiv}{CDiv}
\DeclareMathOperator{\Nm}{Nm}
\DeclareMathOperator{\Pro}{Pro}
\DeclareMathOperator{\PSh}{PSh}
\DeclareMathOperator{\Sh}{Sh}
\DeclareMathOperator{\CI}{CI}
\DeclareMathOperator{\Adm}{Adm}
\DeclareMathOperator{\ch}{ch}
\def\int{\mathrm{int}}
\def\intHom{\mathcal{H}\mathrm{om}}
\def\smallcube{{\scaleobj{0.8}{\overline{\square}}}}
\def\Lcube{h_0^{\smallcube}}
\def\Rcube{h^{0,\smallcube}}
\def\aff{\mathrm{aff}}
\def\category#1{\operatorname{\mathrm{#1}}}
\def\Sm{\category{Sm}}
\def\Comp{\category{Comp}}
\def\Cor{\category{Cor}}
\def\RSC{\category{RSC}}
\def\Ab{\category{Ab}}
\def\MCor{\category{MCor}}
\def\ulMCor{\category{\underline{M}Cor}}
\def\quotprojlim{%
  \mathop{``\mathpalette\varlim@{\leftarrowfill@\textstyle}"}\nmlimits@
}
\def\quotinjlim{%
  \mathop{``\mathpalette\varlim@{\rightarrowfill@\textstyle}"}\nmlimits@
}
\begin{document}
\title{A motivic construction of the de Rham-Witt complex}

\author[J. Koizumi]{Junnosuke Koizumi}
\address{Graduate School of Mathematical Sciences, University of Tokyo, 3-8-1 Komaba, Meguro-ku, Tokyo 153-8914, Japan}
\email{jkoizumi@ms.u-tokyo.ac.jp}

\author[H. Miyazaki]{Hiroyasu Miyazaki}
\address{NTT Institute for Fundamental Mathematics, NTT Communication Science Laboratories, Nippon Telegraph and Telephone Corporation, 3-9-11 Midori-cho, Musashino-shi, Tokyo 180-8585, Japan}
\email{hiroyasu.miyazaki@ntt.com}

\date{\today}
\thanks{The first author is supported by JSPS KAKENHI Grant (22J20698).
The second author is supported by JSPS KAKENHI Grant (19K23413, 21K13783). 
}

\subjclass{14F42(primary), 13F35, 14F30, 19E15(secondary).}

\keywords{reciprocity sheaves, cube invariance, modulus pairs, de Rham-Witt complex}

\begin{abstract}
The theory of reciprocity sheaves due to Kahn-Saito-Yamazaki is a powerful framework to study invariants of smooth varieties via invariants of pairs $(X,D)$ of a variety $X$ and a divisor $D$.
We develop a generalization of this theory where $D$ can be a $\mathbb{Q}$-divisor.
As an application, we provide a motivic construction of the de Rham-Witt complex, which is analogous to the motivic construction of the Milnor $K$-theory due to Suslin-Voevodsky.
\end{abstract}


\maketitle
\setcounter{tocdepth}{1}
\tableofcontents

\enlargethispage*{20pt}
\thispagestyle{empty}

\section*{Introduction}

In Voevodsky's theory of mixed motives, the notion of $\mathbb{A}^1$-invariant sheaf played a fundamental role (see \cite{voetri}, \cite{VoeA1}, \cite{MVW} etc.).
An $\mathbb{A}^1$-invariant sheaf is a \emph{sheaf with transfers} satisfying \emph{$\mathbb{A}^1$-invariance}. 
A sheaf with transfers is a Nisnevich sheaf of abelian groups on the category of \emph{finite correspondences}, denoted $\Cor_k$, where $k$ is the fixed base field.
The objects of $\Cor_k$ are smooth schemes over $k$, and a morphism $X \to Y$ in $\Cor_k$ is given by an algebraic cycle on $X \times Y$ whose components are finite surjective over a connected component of $X$.
We say that a sheaf with transfers $F$ is $\mathbb{A}^1$-invariant if $\pr_1^*\colon F(X) \to F(X \times \mathbb{A}^1)$ is an isomorphism for any $X \in \Cor_k$.

On the other hand, in a series of papers \cite{KSY1}, \cite{KSY2}, Kahn-Saito-Yamazaki developed the theory of \emph{reciprocity sheaves}.
This is a vast generalization of the theory of $\mathbb{A}^1$-invariant sheaves over a field, and it captures ramification-theoretic information of invariants of schemes (see e.g. \cite{RS21}).
The class of reciprocity sheaves includes many interesting examples that are not $\mathbb{A}^1$-invariant, such as the sheaf of differential forms, the Hodge-Witt sheaf, and all commutative algebraic groups.

Let us recall what reciprocity sheaves are.
The key idea is to replace smooth schemes by {\it proper modulus pairs}.
A proper modulus pair over $k$ is a pair $(X,D_X)$ of a proper $k$-scheme $X$ and an effective Cartier divisor $D_X$ on $X$ such that $X \setminus |D_X|$ is smooth over $k$.
For example, the pair $\bcube :=(\mathbb{P}^1,[\infty])$ is a proper modulus pair, which we call the \emph{cube}. 
We can define a category of proper modulus pairs $\MCor_k$ similar to $\Cor_k$ by taking into account the information of Cartier divisors.
An additive presheaf $F\colon (\MCor_k)^\op\to \Ab$ is said to be \emph{cube invariant} if for any modulus pair $\sX = (X,D_X)$, the map 
\[
\pr_1^*\colon F(\sX) \to F(\sX \otimes \bcube), \quad \sX \otimes \bcube := (X \times \mathbb{P}^1, \pr_1^*D_X +\pr_2^*[\infty])
\]
is an isomorphism.
A presheaf with transfers (resp. sheaf with transfers) $F$ is said to be a reciprocity presheaf (resp. reciprocity sheaf) if it belongs to the essential image of
\[
\left(
	\begin{tabular}{c}
	\text{cube invariant}\\
	\text{presheaves}
	\end{tabular}
\right)
\hookrightarrow
\PSh(\MCor_k)
\xrightarrow{\omega_!}
\PSh(\Cor_k),
\]
where $\omega_!$ is the left Kan extension of $(X,D_X) \mapsto X \setminus |D_X|$
\footnote{
Originally, the notion of reciprocity sheaf given in \cite{KSY1} looks quite different from the above one. 
However, in \cite{KSY2}, it is shown that the above definition coincides with the original one, provided that the base field $k$ is perfect.
An advantage of the above description is that we can think of a reciprocity sheaf as the ``shadow'' of a cube invariant presheaf. 
Since the definition of cube invariant presheaves is very similar to that of $\mathbb{A}^1$-invariant presheaves, one can prove many important properties of cube invariant presheaves by following Voevodsky's classical methods, at least partly.
}.

In the first half of this paper, we will generalize the theory of modulus pairs and sheaves on them to allow the effective divisor $D_X$ to have \emph{rational coefficients} (see applications below for the reason why we need this generalization).
We will define the category of \emph{proper $\mathbb{Q}$-modulus pairs} $\MCor^\mathbb{Q}_k$ over $k$ (Definition \ref{def_QCor}).
We can define the notion of cube invariant presheaf and the functor $\omega_!\colon \PSh(\MCor^\mathbb{Q}_k)\to \PSh(\Cor_k)$ as before.
In fact, the resulting notion of $\mathbb{Q}$-reciprocity presheaf coincides with the usual one.
This allows us to use $\mathbb{Q}$-modulus pairs in the theory of reciprocity sheaves.

\

In the latter half of this paper, we will give some applications of our theory. 
The main results include a motivic construction of the de Rham-Witt complex, which we will sketch below.

It is well-known that the multiplicative group $\mathbb{G}_m$, which is an important example of an $\mathbb{A}^1$-invariant sheaf, has a motivic presentation. 
That is, there exists a canonical isomorphism $\mathbb{G}_m \simeq h_0^{\mathbb{A}^1}(\mathbb{Z}_\tr(\mathbb{A}^1\setminus\{0\})/\mathbb{Z})$ in $\PSh(\Cor_k^\aff)$, where $\Cor_k^\aff$ is the full subcategory of $\Cor_k$ consisting of affine schemes, $\mathbb{Z}_\tr\colon \Cor_k\to \PSh(\Cor_k)$ is the Yoneda embedding, and $h_0^{\mathbb{A}^1}$ is the $0$-th Suslin homology:
$$
	h_0^{\mathbb{A}^1}F(X)=\Coker(F(X\times \mathbb{A}^1)\xrightarrow{i_0^*-i_1^*} F(X)).
$$
Moreover, the group structure on $\mathbb{G}_m$ is simply induced by 
the multiplication morphism $(\mathbb{A}^1\setminus\{0\}) \times (\mathbb{A}^1\setminus\{0\}) \to \mathbb{A}^1\setminus\{0\}$.
Suslin-Voevodsky proved more generally that the sheaf of unramified Milnor $K$-theory admits a motivic construction \cite[Theorem 3.4]{SV00}.
These results are fundamental for various computations in the theory of mixed motives.

It is a natural idea to extend this to other invariants of schemes.
We start with a motivic presentation of the ring of big Witt vectors $\mathbb{W}_n(A)=1+tA[t]/(t^{n+1})$ of a ring $A$.
For $n\geq 0$, we define $\mathbb{W}_n^+\in \PSh(\MCor^\mathbb{Q}_k)$ by
$$
\mathbb{W}_n^+=\varinjlim_{\varepsilon>0}\mathbb{Z}_\tr(\mathbb{P}^1, (n+\varepsilon)[\infty])/\mathbb{Z},
$$
where $\mathbb{Z}_\tr\colon \MCor^\mathbb{Q}_k\to \PSh(\MCor^\mathbb{Q}_k)$ is the Yoneda embedding.
Then there are operations $F_s, V_s$ on $\mathbb{W}_n^+$ for each $s>0$ which are induced by the morphism $t\mapsto t^s$ on $\mathbb{A}^1$ and its transpose.
These operations are called the Frobenius and the Verschiebung.
Moreover, the multiplication on $\mathbb{A}^1$ induces a multiplication on $\mathbb{W}_n^+$.
The $0$-th Suslin homology $h_0^{\mathbb{A}^1}$ generalizes to $\PSh(\MCor^\mathbb{Q}_k)$:
$$
	\Lcube F(\mathcal{X})=\Coker(F(\mathcal{X}\otimes\bcube)\xrightarrow{i_0^*-i_1^*} F(\mathcal{X})).
$$
We write $h_0=\omega_!\Lcube$.
First we prove the following result.

\begin{introtheorem}[Theorem \ref{thm:ab-comparison}, Proposition \ref{prop:ring-comparison}]\label{thm:intro_1}
There is an isomorphism $h_0\mathbb{W}_n^+\xrightarrow{\sim}\mathbb{W}_n$ in $\PSh(\Cor_k^\aff)$ which preserves the multiplication, Frobenius, and the Verschiebung.
\end{introtheorem}

In the proof of this theorem, we use the fact that the group $h_0\mathbb{Z}_\tr(\mathbb{P}^1, r[\infty])\;(r\in \mathbb{Q}_{>0})$ depends only on $\lceil r \rceil$.
This is a consequence of the \textit{motivic Hasse-Arf theorem} which we prove in Theorem \ref{geometric Hasse-Arf for curves}.
It is an analogue of the classical Hasse-Arf theorem which states that the upper ramification group $G^{(r)}$ of an abelian extension of a local field depends only on $\lceil r \rceil$.

Theorem \ref{thm:intro_1} has a non-trivial application to reciprocity presheaves.
Imitating the construction in \cite{Miyazaki-19}, we define
$
NF(X)=\Ker(i_0^*\colon F(X\times\mathbb{A}^1)\to F(X))
$
for a reciprocity presheaf $F$.
Then we have $NF=0$ if and only if $F$ is $\mathbb{A}^1$-invariant.
Using the above theorem, we can prove that $\mathbb{W}=\varprojlim_{n\geq 0}\mathbb{W}_n$ acts on $NF$.
As a consequence, we obtain a short proof of the following result of Binda-Cao-Kai-Sugiyama \cite[Theorem 1.3]{BCKS}.

\begin{introtheorem}[Corollary \ref{BCKSgen}]
	Let $F$ be a reciprocity presheaf over $k$ and assume that $F$ is separated for the Zariski topology.
	\begin{enumerate}
		\item	If $\ch(k)=0$ and $F\otimes\mathbb{Q}=0$, then $F$ is $\mathbb{A}^1$-invariant.
		\item	If $\ch(k)=p>0$ and $F$ is $p$-torsion-free, then $F$ is $\mathbb{A}^1$-invariant.
	\end{enumerate}
\end{introtheorem}

Next we give a motivic presentation of the ring of $p$-typical Witt vectors $W_n(A)$ of a $\mathbb{Z}_{(p)}$-algebra $A$.
For $n\geq 0$, we define $\widehat{\mathbb{W}}_n^+\in \PSh(\MCor^\mathbb{Q}_k)$ by
$$
\widehat{\mathbb{W}}_n^+=\varinjlim_{\varepsilon>0}\mathbb{Z}_\tr(\mathbb{P}^1, \varepsilon[0]+(n+\varepsilon)[\infty]).
$$
As in the case of $\mathbb{W}_n^+$, we have operations $F_s,V_s$ for $s>0$ and a multiplication on $\widehat{\mathbb{W}}_n^+$.
We define $W_n^+$ to be the quotient of $\widehat{\mathbb{W}}_{p^{n-1}}^+\otimes \mathbb{Z}_{(p)}$ by the images of the idempotents $\ell^{-1}V_\ell F_\ell$ for all prime numbers $\ell\neq p$.
The operations $F_p,V_p$ descend to operations $F,V$ on $W_n^+$ called the Frobenius and the Verschiebung, and the multiplication also descends to $W_n^+$.
We prove the following

\begin{introtheorem}[Corollary \ref{cor:p_typical_comparison}]
There is an isomorphism $h_0W_n^+\xrightarrow{\sim}W_n$ in $\PSh(\Cor_k^\aff)$ which is compatible with the Frobenius, the Verschiebung, and the multiplication.
\end{introtheorem}

In order to treat the de Rham-Witt complex, we have to assume that $k$ is perfect and $p\geq 3$.
We define $\mathbb{G}_m^+\in \PSh(\MCor^\mathbb{Q}_k)$ by
$$
\mathbb{G}_m^+=\varinjlim_{\varepsilon>0}\mathbb{Z}_\tr(\mathbb{P}^1, \varepsilon[0]+\varepsilon[\infty])/\mathbb{Z}.
$$
Using the tensor structure on $\PSh(\MCor^\mathbb{Q}_k)$ (see \S 2), we obtain an object $W_n^+ \otimes \mathbb{G}_m^{+\otimes q}$ of $\PSh(\MCor^\mathbb{Q}_k)$.
Our main result is the following:

\begin{introtheorem}[Theorem \ref{thm:rep-WOmega}]
Let $k$ be a perfect field of characteristic $p\geq 3$.
Then there is an isomorphism
$$a_{\Nis} h_0 (W_n^+ \otimes \mathbb{G}_m^{+\otimes q})\xrightarrow{\sim}W_n\Omega^q$$
in $\Sh_\Nis(\Cor_k)$, where $a_\Nis$ denotes the Nisnevich sheafification.
\end{introtheorem}

The proof goes as follows: first we prove that the left hand side admits a Witt complex structure (Theorem \ref{thm:theta}). 
Since the de Rham-Witt complex is initial in the category of Witt complexes, we obtain a unique morphism from the right hand side to the left hand side.
To prove that this morphism is an isomorphism, we construct an inverse by using the transfer structure on $W_n\Omega^q$. 

We note that a similar motivic presentation for $\Omega^q$ is obtained by R\"ulling-Sugiyama-Yamazaki \cite{RSY} and the first author \cite{Koi2}.
Also, a presentation of the big de Rham-Witt complex $\mathbb{W}_n\Omega^q$ using additive higher Chow groups is obtained by R\"ulling \cite{Rulling-thesis} for fields, and by Krishna-Park \cite{KP11} for regular semilocal algebras over a  field.
We expect that these results will be connected by some comparison isomorphism between Suslin homology and additive higher Chow groups.

\

The structure of the present paper is as follows.
In \S \ref{sec:preliminiary}, 
we prepare preliminary results on $\mathbb{Q}$-Cartier divisors.
In \S \ref{sec:modpair},
we define $\mathbb{Q}$-modulus pairs and prove some basic properties.
In \S \ref{sec:ci-rec},
we define and study the notion of cube invariant presheaf and the notion of reciprocity presheaf following \cite{KSY2}. 
In \S \ref{sec:HA},
we formulate and prove the motivic Hasse-Arf theorem, by comparing the $0$-th Suslin homology group and the Chow group of relative $0$-cycles of a modulus curve. This comparison can be seen as a generalization of the result in \cite{RY16}.
In \S \ref{sec:Witt},
we apply our machinery to give a motivic construction of the ring of Witt vectors and basic operations on it.
As an application, we give a short proof of the result of Binda-Cao-Kai-Sugiyama \cite{BCKS} on torsion and divisibility of a reciprocity sheaf.
In \S \ref{sec:dRWitt},
we give a motivic construction of the de Rham-Witt complex.

\subsection*{Acknowledgements}

We would like to thank Shuji Saito for his interest on this work. We also appreciate his valuable comments on earlier versions of this paper, which encouraged the authors to improve the main results.

\subsection*{Notations and conventions}
\begin{itemize}
	\item	For a scheme $X$ and $x\in X$, we write $k(x)$ for the residue field of $X$ at $x$.
			We write $X^{(d)}$ for the set of points on $X$ of codimension $d$.
			An element of $X^{(0)}$ is called a \emph{generic point}.	
	\item	We say that a morphism of schemes $f\colon X\to Y$ is \emph{pseudo-dominant} if it takes generic points to generic points, i.e., $f(X^{(0)})\subset Y^{(0)}$.
	\item	For an integral scheme $X$, we write $k(X)$ for its function field.
			If $X$ is a noetherian normal integral scheme and $f\in k(X)^\times$, we write $\div(f)$ for the Weil divisor on $X$ defined by $f$.
	\item	We write $\Sm_k$ for the category of smooth separated $k$-schemes of finite type.
			We define $\Sm_k^\aff$ to be the full subcategory of $\Sm_k$ spanned by affine schemes.
	\item	For an additive category $\mathcal{C}$, we write $\PSh(\mathcal{C})$ for the category of additive functors $\mathcal{C}^\op\to \Ab$.
	\item	For a category $\mathcal{C}$, we write $\Pro(\mathcal{C})$ for the category of pro-objects in $\mathcal{C}$.
			A {\it pro-functor} $F\colon \mathcal{C}\dashrightarrow\mathcal{D}$ is a functor $F\colon \mathcal{C}\to \Pro(\mathcal{D})$.
			A pro-functor $F\colon \mathcal{C}\dashrightarrow\mathcal{D}$ is said to be \emph{pro-left adjoint} to $G\colon \mathcal{D}\to \mathcal{C}$ if there is a natural isomorphism
			$\Hom_{\Pro(\mathcal{D})}(F(c),d)\simeq \Hom_{\mathcal{C}}(c,G(d))$.
\end{itemize}

\newpage
\section{Preliminaries}\label{sec:preliminiary}

\subsection{Finite correspondences}

First we recall Suslin-Voevodsky's category of finite correspondences $\Cor_k$.
It is an additive category having the same objects as $\Sm_k$, and its group of morphisms $\Cor_k(X,Y)$ is the group of algebraic cycles on $X\times Y$ whose components are finite pseudo-dominant over $X$.
The fiber product over $k$ gives a symmetric monoidal structure on $\Cor_k$.
For any morphism $f\colon X\to Y$ in $\Sm_k$, the graph of $f$ gives a morphism $f\colon X\to Y$ in $\Cor_k$.
If $f$ is finite pseudo-dominant, then the transpose of the graph of $f$ gives a morphism ${}^tf\colon Y\to X$ in $\Cor_k$.
For $S\in \Sm_k$ and smooth $S$-schemes $X,Y$, we write $\Cor_S(X,Y)\subset \Cor_k(X,Y)$ for the subgroup of cycles supported on $X\times_SY$.

A \emph{presheaf with transfers} is an additive presheaf $F\colon (\Cor_k)^\op\to \Ab$.
If $f\colon X\to Y$ is a finite pseudo-dominant morphism in $\Sm_k$ and $F\in \PSh(\Cor_k)$, then we write $f_*$ or $\Tr_{X/Y}$ for the map $({}^tf)^*\colon F(X)\to F(Y)$.
We say that $F$ is a \emph{Nisnevich sheaf} if for any $X\in \Sm_k$, the presheaf $F_X\colon U\mapsto F(U)$ on $X_\Nis$ is a Nisnevich sheaf.
We write $\Sh_\Nis(\Cor_k)\subset\PSh(\Cor_k)$ for the full subcategory spanned by Nisnevich sheaves.
The inclusion functor $\Sh_\Nis(\Cor_k)\to \PSh(\Cor_k)$ admits an exact left adjoint $a_\Nis\colon \PSh(\Cor_k) \to \Sh_\Nis(\Cor_k)$, so $\Sh_\Nis(\Cor_k)$ is Grothendieck abelian.

\subsection{$\mathbb{Q}$-Cartier divisors}

We introduce the notion of $\mathbb{Q}$-Cartier divisor over a general noetherian scheme, which will play a fundamental role in this paper.

\begin{definition}
For a noetherian scheme $X$, we write $\CDiv(X)$ for the group of Cartier divisors on $X$.
We define the group of \emph{$\mathbb{Q}$-Cartier divisors} on $X$ by
$\CDiv_{\mathbb{Q}} (X) := \CDiv (X) \otimes_{\mathbb{Z}} \mathbb{Q}$.
\end{definition}

If $X$ is a noetherian normal scheme, then the group $\CDiv (X)$ is embedded into the free abelian group of Weil divisors on $X$. In particular, $\CDiv (X)$ is a free abelian group in this case. 
Let $f\colon Y \to X$ be a morphism of noetherian schemes and $D$ be a $\mathbb{Q}$-Cartier divisor on $X$. 
We say that the pullback $f^*D$ of $D$ by $f$ \emph{exists} if there is an ordinary Cartier divisor $E$ on $X$ and $r \in \mathbb{Q}$ with $D = rE$ such that the pullback $f^* E$ exists. 
In this situation, we define $f^* D := rf^*E\in \CDiv_{\mathbb{Q}} (Y)$.
This does not depend on the choice of $E$ and $r$.

Let $X$ be a noetherian scheme and $D$ be a $\mathbb{Q}$-Cartier divisor on $X$.
We say that $D$ is \emph{$\mathbb{Q}$-effective} if there is an ordinary effective Cartier divisor $E$ on $X$ and $r\in \mathbb{Q}_{>0}$ such that $D = rE$ in $\CDiv_{\mathbb{Q}}(X)$.
For $\mathbb{Q}$-Cartier divisors $D,D'$, we write $D\leq D'$ if $D'-D$ is $\mathbb{Q}$-effective.

If an ordinary Cartier divisor $D$ is effective, then it is $\mathbb{Q}$-effective.
The converse is not true in general, but it is true for normal schemes:

\begin{lemma}\label{effective_vs_q-effective}
	Let $X$ be a noetherian normal scheme and $D$ be an ordinary Cartier divisor on $X$.
	Then $D$ is effective if and only if $D$ is $\mathbb{Q}$-effective.
\end{lemma}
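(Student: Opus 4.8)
The statement is local on $X$, so I would begin by reducing to the affine case $X = \Spec A$ with $A$ a normal ring, and in fact — since any finitely reducible normal scheme is a finite disjoint union of normal integral schemes, and a Cartier divisor is effective (resp. $\mathbb{Q}$-effective) iff its restriction to each connected component is — to the case where $A$ is a normal domain with fraction field $K$. Write $D = \cdiv(d)$ for some $d \in K^\times$; then $D$ is effective iff $d \in A$, and the whole point is to show that $\mathbb{Q}$-effectivity of $D$ forces $d \in A$.

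The key observation is what $\mathbb{Q}$-effectivity unwinds to. By definition there is an ordinary effective Cartier divisor $E$ and $r \in \mathbb{Q}_{>0}$ with $D = rE$ in $\CDiv_{\mathbb{Q}}(X)$. Writing $r = a/b$ with $a,b$ positive integers, this means $bD = aE$ in $\CDiv(X)$, i.e. the Cartier divisor $bD - aE$ is torsion in $\CDiv(X)$. But by Lemma \ref{lem:Q-inj}, $\CDiv(X)$ is torsion-free when $X$ is finitely reducible and normal, so in fact $bD = aE$ as honest Cartier divisors. Hence $D$ is a positive rational multiple $a/b$ of the \emph{effective} Cartier divisor $E$, with $bD = aE$. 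In the affine/integral picture: $d^b = u\, e^a$ for some $u \in A^\times$ and some $e \in A$ (a non-zero-divisor representing $E$), where $e \in A$ precisely because $E$ is effective.

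It remains to deduce $d \in A$ from $d^b = u e^a$ with $e \in A$, $u \in A^\times$. This is now a standard normality/integral-closure argument: $d$ satisfies the monic polynomial $x^b - u e^a \in A[x]$, so $d$ is integral over $A$; since $A$ is integrally closed in $K$ and $d \in K^\times$, we get $d \in A$, i.e. $D$ is effective. (One can also phrase this via the valuative criterion, Lemma \ref{lem:valuative}: for every valuation $v$ on $K$ nonnegative on $A$, $b\, v(d) = a\, v(e) \geq 0$ forces $v(d) \geq 0$, hence $d \in A$; but the direct integrality argument is cleaner.)

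The only step requiring genuine care — and the main "obstacle", though a mild one — is the passage from $D = rE$ in $\CDiv_{\mathbb{Q}}(X)$ to the honest equality $bD = aE$ in $\CDiv(X)$: this is exactly where normality (via the torsion-freeness of $\CDiv(X)$ from Lemma \ref{lem:Q-inj}) is used, and it is the place where the hypothesis cannot be dropped, matching the role of normality everywhere else in this subsection. Everything after that is the routine observation that a rational multiple of an effective divisor, once it is known to be an integral divisor, is effective on a normal scheme.
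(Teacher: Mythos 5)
Your proposal matches the paper's proof in essence: reduce to $\Spec A$ with $A$ a normal domain, use the torsion-freeness of $\CDiv(X)$ from Lemma~\ref{lem:Q-inj} to upgrade the $\mathbb{Q}$-equality to an honest equality of Cartier divisors, and then conclude by normality that the representing rational function is integral over $A$, hence in $A$. The only small point of care is that one should reduce all the way to $A$ a \emph{local} normal domain before writing $D = \cdiv(d)$ for a single $d \in K^\times$ (as the paper does); since effectiveness is local this costs nothing, and the rest of your argument is identical up to the cosmetic choice of writing $r = a/b$ rather than absorbing the numerator into $E$ to get $r = 1/n$.
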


\begin{proof}
	We may assume that $X=\Spec A$ with $A$ a normal domain and $D=\div(a)$ with $a\in \Frac(A)^\times$.
	If $D$ is $\mathbb{Q}$-effective, then there is some non-zero-divisor $b\in A$ and $n\geq 1$ such that $\div(a)=(1/n)\div(b)$ in $\CDiv_\mathbb{Q}(X)$.
	Since $X$ is normal, the map $\CDiv(X)\to \CDiv_\mathbb{Q}(X)$ is injective and hence $\div(a^n)=\div(b)$ in $\CDiv(X)$.
	This implies that $a^n-ub=0$ for some $u\in A^\times$, so $a\in A$ since $A$ is normal.
	Therefore $D$ is effective.
\end{proof}

\begin{corollary}\label{ineq_abs}
	Let $X$ be a noetherian scheme and $D,D'$ be ordinary Cartier divisors on $X$. 
	Then we have $D|_{X^N}\leq D'|_{X^N}$ as $\mathbb{Q}$-Cartier divisors if and only if $D|_{X^N}\leq D'|_{X^N}$ as ordinary Cartier divisors.\end{corollary}

\begin{definition}
	Let $X$ be a noetherian scheme and $D$ be a $\mathbb{Q}$-effective $\mathbb{Q}$-Cartier divisor on $X$.
	Suppose that $D=rE$ in $\CDiv_\mathbb{Q}(X)$ where $E$ is an ordinary effective Cartier divisor on $X$ and $r\in \mathbb{Q}_{>0}$.
	Then the support $|D|$ of $D$ is defined to be $|E|$.
	This is well-defined since $|E|=|nE|$ for any ordinary effective Cartier divisor $E$ and $n\geq 1$.
\end{definition}

\section{$\mathbb{Q}$-modulus pairs}\label{sec:modpair}
\subsection{$\mathbb{Q}$-Modulus pairs}

Recall from \cite{KMSY1} that a \emph{modulus pair} (over $k$) is a pair $\mathcal{X}=(X,D_X)$ where $X$ is a separated $k$-scheme of finite type and $D_X$ is an effective Cartier divisor on $X$ such that $X^\circ:=X\setminus|D_X|$ is smooth over $k$.

\begin{definition}\label{def_QCor}
	A {\it $\mathbb{Q}$-modulus pair} $\mathcal{X}$ is a pair $(X,D_X)$ where $X$ is a separated $k$-scheme of finite type and $D_X$ is a $\mathbb{Q}$-effective $\mathbb{Q}$-Cartier divisor on $X$, such that $X^\circ:=X\setminus|D_X|$ is smooth over $k$.
\end{definition}

In order to avoid confusion, we use the word ``$\mathbb{Z}$-modulus pairs'' to indicate modulus pairs in the sense of \cite{KMSY1}.
In what follows, we fix $\Lambda\in \{\mathbb{Z},\mathbb{Q}\}$ and develop the theory of $\Lambda$-modulus pairs.

An \emph{ambient morphism} of $\Lambda$-modulus pairs $f\colon \mathcal{X}\to \mathcal{Y}$ is a morphism of $k$-schemes $f\colon X\to Y$ such that $f(X^\circ)\subset Y^\circ$ and $D_X|_{X^N}\geq f^*D_Y|_{X^N}$ hold.
It is called \emph{minimal} if $D_X=f^*D_Y$ holds.

Let $\mathcal{X},\mathcal{Y}$ be $\Lambda$-modulus pairs over $k$ and let $V\subset X^\circ\times Y^\circ$ be an integral closed subscheme.
We say that $V$ is \emph{left proper} if the closure $\overline{V}$ of $V$ in $X\times Y$ is proper over $X$.
We say that $V$ is \emph{admissible} if $(\pr_1^*D_X)|_{\overline{V}^N}\geq (\pr_2^*D_Y)|_{\overline{V}^N}$ holds.
We write $\ulMCor^\Lambda_k(\mathcal{X},\mathcal{Y})$ for the subgroup of $\Cor_k(X^\circ,Y^\circ)$ consisting of cycles whose components are left proper and admissible.
For any ambient morphism $f\colon \mathcal{X}\to \mathcal{Y}$ over $k$, the graph of $f^\circ$ gives a morphism $f\colon \mathcal{X}\to \mathcal{Y}$ in $\ulMCor^\Lambda_k$.
If $f$ is a proper minimal ambient morphism such that $f^\circ$ is finite pseudo-dominant, then the transpose of the graph of $f^\circ$ gives a morphism ${}^tf\colon \mathcal{Y}\to \mathcal{X}$ in $\ulMCor^\Lambda_k$.
Note that for $\mathbb{Z}$-modulus pairs $\mathcal{X},\mathcal{Y}$, we have
$\ulMCor^\mathbb{Z}_k(\mathcal{X},\mathcal{Y}) = \ulMCor^\mathbb{Q}_k(\mathcal{X},\mathcal{Y})$
by Corollary \ref{ineq_abs}.

\begin{lemma}\label{lem:composition_admissible}
	Let $\mathcal{X},\mathcal{Y},\mathcal{Z}$ be $\Lambda$-modulus pairs over $k$ and $\alpha\in \ulMCor^\Lambda_k(\mathcal{X},\mathcal{Y})$, $\beta\in \ulMCor^\Lambda_k(\mathcal{Y},\mathcal{Z})$.
	Then we have $\beta\circ \alpha\in \ulMCor^\Lambda_k(\mathcal{X},\mathcal{Z})$, where $\circ$ denotes the composition in $\Cor_k$.
\end{lemma}

\begin{proof}
	For $\Lambda = \mathbb{Z}$, this is proved in \cite[Proposition 1.2.4 and Proposition 1.2.7]{KMSY1}.
	For $\Lambda = \mathbb{Q}$, we may replace $D_X,D_Y,D_Z$ by $nD_X,nD_Y,nD_Z$ for $n\in \mathbb{Z}_{>0}$, so we may assume that $\mathcal{X},\mathcal{Y},\mathcal{Z}$ are $\mathbb{Z}$-modulus pairs.
	Then we have $\ulMCor^\mathbb{Z}_k(\mathcal{X},\mathcal{Y}) = \ulMCor^\mathbb{Q}_k(\mathcal{X},\mathcal{Y})$ etc., so the claim follows from the case $\Lambda = \mathbb{Z}$.
\end{proof}

We define $\ulMCor^\Lambda_k$ to be the category of $\Lambda$-modulus pairs over $k$, where the morphisms are given by $\ulMCor^\Lambda_k(\mathcal{X},\mathcal{Y})$.
The canonical functor $\ulMCor^\mathbb{Z}_k\to \ulMCor^\mathbb{Q}_k$ is fully faithful by Corollary \ref{ineq_abs}.
We set
$$
	\mathcal{X}\otimes \mathcal{Y}=(X\times Y, \pr_1^*D_X+\pr_2^*D_Y).
$$
The next lemma shows that this gives a symmetric monoidal structure on $\ulMCor^\Lambda_k$:

\begin{lemma}
	Let $\mathcal{X}_1,\mathcal{Y}_1,\mathcal{X}_2,\mathcal{Y}_2\in \ulMCor^\Lambda_k$ and $\alpha\in \ulMCor^\Lambda_k(\mathcal{X}_1,\mathcal{Y}_1)$, $\beta\in \ulMCor^\Lambda_k(\mathcal{X}_2,\mathcal{Y}_2)$.
	Then we have $\alpha\times \beta \in \ulMCor^\Lambda_k(\mathcal{X}_1\otimes\mathcal{X}_2,\mathcal{Y}_1\otimes\mathcal{Y}_2)$.
\end{lemma}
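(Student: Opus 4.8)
The strategy is to transcribe the proof of Lemma~\ref{lem:composition_admissible}, viewing the external product as a variant of composition (in fact an easier one, since no pushforward is involved). By the construction of the monoidal structure on $\Cor_S$ we already know $\alpha\times\beta\in c_S(X_1^\circ\times_SX_2^\circ,\,Y_1^\circ\times_SY_2^\circ)$, so the only thing left to verify is that every component $V$ of $\alpha\times\beta$, viewed inside $P:=X_1\times_SX_2\times_SY_1\times_SY_2$, is left proper and admissible with respect to the box-product divisors $D_{X_1\boxtimes X_2}=p_{X_1}^*D_{X_1}+p_{X_2}^*D_{X_2}$ and $D_{Y_1\boxtimes Y_2}=p_{Y_1}^*D_{Y_1}+p_{Y_2}^*D_{Y_2}$, where $p_{X_1},p_{X_2},p_{Y_1},p_{Y_2}$ denote the four projections out of $P$.

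The first step is the support estimate $|\alpha\times\beta|\subseteq|\alpha|\times_S|\beta|$ inside $P$ (after the evident reordering of factors). Unwinding the definition of $\alpha\times\beta$ as the iterated pullback $(\alpha\otimes_{X_1}(X_1\times_SX_2))\otimes_{X_1\times_SX_2}(\beta\otimes_{X_2}(X_1\times_SX_2))$ and applying Lemma~\ref{lem:pullback_lemma}~(\ref{lem:support}) twice: the first factor has support in $|\alpha|\times_SX_2$, the second in $X_1\times_S|\beta|$, and the pullback of the two over $X_1\times_SX_2$ has support in their fibre product, which is exactly $|\alpha|\times_S|\beta|$. Passing to scheme-theoretic closures gives $\overline V\subseteq\overline{|\alpha\times\beta|}\subseteq\overline{|\alpha|}\times_S\overline{|\beta|}$. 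Since the closure of each component of $\alpha$ is proper over $X_1$ and the closure of each component of $\beta$ is proper over $X_2$, the scheme $\overline{|\alpha|}\times_S\overline{|\beta|}$ (a finite union of products of such closures) is proper over $X_1\times_SX_2$; hence so is its closed subscheme $\overline V$. This proves left properness.

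For admissibility, fix a component $V$. On $\overline V$ we must compare $(p_{X_1}^*D_{X_1}+p_{X_2}^*D_{X_2})|_{\overline V}$ with $(p_{Y_1}^*D_{Y_1}+p_{Y_2}^*D_{Y_2})|_{\overline V}$; since the weak inequality $\prec$ is additive (pull back to the normalization, where $\le$ is additive), it suffices to prove $(p_{X_i}^*D_{X_i})|_{\overline V}\succ(p_{Y_i}^*D_{Y_i})|_{\overline V}$ for $i=1,2$. I will do $i=1$, the case $i=2$ being symmetric (use $\beta$ and the projection onto $X_2\times_SY_2$ in place of $\alpha$). By the valuative criterion Lemma~\ref{lem:we-valuative}, valid also for $\mathbb{Q}$-Cartier divisors, it is enough to show that for every valuation ring $R$ and every $\rho\colon\Spec R\to P$ sending the generic point into $V$, one has $\rho^*p_{X_1}^*D_{X_1}\ge\rho^*p_{Y_1}^*D_{Y_1}$. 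Composing $\rho$ with the projection $q_1:=(p_{X_1},p_{Y_1})\colon P\to X_1\times_SY_1$ and using the support estimate, the generic point of $\Spec R$ lands in $|\alpha|$, hence in some component of $\alpha$, which is admissible; applying Lemma~\ref{lem:admissibility_comparison} to $q_1\circ\rho$ then yields precisely $\rho^*p_{X_1}^*D_{X_1}\ge\rho^*p_{Y_1}^*D_{Y_1}$, as required.

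All Cartier-divisor pullbacks and restrictions appearing above exist by Lemma~\ref{lem:pb-criteria-3}, since $V$ is scheme-theoretically dense in $\overline V$ and maps into the interiors, and likewise the generic point of $\Spec R$ is dense in $\Spec R$ and maps into the interiors; I would record this only briefly. The one step requiring genuine care is the support computation $|\alpha\times\beta|\subseteq|\alpha|\times_S|\beta|$, where one must keep track of the iterated-pullback structure together with the shuffling of the four factors $X_1,X_2,Y_1,Y_2$; once that is in hand, the rest is a direct copy of the argument for composition of modulus correspondences, and I do not expect any real obstacle.
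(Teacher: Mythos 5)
Your proof is correct and takes essentially the same route as the paper's: the support bound $|\alpha\times\beta|\subseteq|\alpha|\times_S|\beta|$ via Lemma~\ref{lem:pullback_lemma}~(\ref{lem:support}) gives left properness, and admissibility is reduced through the valuative criterion to the admissibility of the components of $\alpha$ and $\beta$ individually, by projecting to $X_1\times_SY_1$ and $X_2\times_SY_2$. The only cosmetic difference is that you first pass each factor to the weak inequality $\prec$ and add using additivity, whereas the paper adds the two valuative inequalities first and applies Lemma~\ref{lem:admissibility_comparison} once at the end; these are interchangeable.
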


\begin{proof}
	For $\Lambda = \mathbb{Z}$, this is proved in \cite[Lemma 2.1.3]{KMSY3}.
	For $\Lambda = \mathbb{Q}$, we may reduce to the case $\Lambda = \mathbb{Z}$ as in the proof of Lemma \ref{lem:composition_admissible}.
\end{proof}

We define $\MCor^\Lambda_k\subset\ulMCor^\Lambda_k$ to be the full subcategory consisting of \emph{proper} $\Lambda$-modulus pairs, i.e., $\mathcal{X}=(X,D_X)$ with $X$ proper over $k$.
There are natural functors
\begin{align*}
&\tau\colon \MCor^\Lambda_k\to \ulMCor^\Lambda_k;\quad \mathcal{X}\mapsto \mathcal{X},\\
&\underline{\omega}\colon \ulMCor^\Lambda_k\to \Cor_k;\quad \mathcal{X}\to X^\circ.
\end{align*}

A \emph{$\Lambda$-modulus presheaf} is an additive presheaf $F\colon (\ulMCor^\Lambda_k)^\op\to \Ab$.
The category $\PSh(\ulMCor^\Lambda_k)$ of $\Lambda$-modulus presheaves admits a symmetric monoidal structure $\otimes$ which extends $\otimes$ on $\ulMCor^\Lambda_k$ by colimits.
The functor $\tau\colon \MCor^\Lambda_k\to \ulMCor^\Lambda_k$ induces an adjunction
$$
	\tau_! \colon \PSh(\MCor^\Lambda_k)\rightleftarrows \PSh(\ulMCor^\Lambda_k) \colon \tau^*
$$
where $\tau^*$ is the restriction functor and $\tau_!$ is the left Kan extension of $\tau$.
Similarly, the functor $\omega\colon \ulMCor^\Lambda_k\to \Cor_k$ induces an adjunction
$$
	\underline{\omega}_! \colon \PSh(\ulMCor^\Lambda_k)\rightleftarrows \PSh(\Cor_k) \colon \underline{\omega}^*
$$
where $\underline{\omega}^*F(\mathcal{X})=F(X^\circ)$ and $\underline{\omega}_!F(X) = F(X,\emptyset)$.
We write $\omega_!:=\underline{\omega}_!\tau_!$ and $\omega^* = \tau^*\underline{\omega}^*$:
$$
	\omega_! \colon \PSh(\MCor^\Lambda_k)\rightleftarrows \PSh(\Cor_k) \colon \omega^*.
$$

In order to write down $\tau_!$ and $\omega_!$ explicitly, we use the notion of \emph{compactification}.
A compactification of $\mathcal{X}\in \ulMCor^\Lambda_k$ is a triple $(\overline{X},\overline{D},\Sigma)$ where $\overline{X}$ is a proper $k$-scheme and $\overline{D},\Sigma$ are $\Lambda$-effective $\Lambda$-Cartier divisors on $\overline{X}$, equipped with an isomorphism $X\simeq \overline{X}\setminus |\Sigma|$ over $k$ such that $D_X=\overline{D}|_X$.
We say that a compactification $(\overline{X}_1,\overline{D}_1,\Sigma_1)$ \emph{dominates} $(\overline{X}_2,\overline{D}_2,\Sigma_2)$ if the diagonal $\Delta_{X^\circ}\subset X^\circ\times X^\circ$ gives a morphism $(\overline{X}_1,\overline{D}_1+\Sigma_1)\to (\overline{X}_2, \overline{D}_2+\Sigma_2)$ in $\MCor^\Lambda_k$.
This defines a poset $\Comp^\Lambda(\mathcal{X})$ of compactifications of $\mathcal{X}$.

\begin{lemma}\label{Comp_is_cofiltered}\label{cofinal_in_Comp}\label{cor:pre_embedding}
	Let $\mathcal{X},\mathcal{Y}\in \ulMCor^\Lambda_k$.
	\begin{enumerate}
	\item	The poset $\Comp^\Lambda(\mathcal{X})$ is cofiltered.
	\item	If $(\overline{X},\overline{D},\Sigma)\in \Comp^\Lambda(\mathcal{X})$, then $\{(\overline{X},\overline{D},n\Sigma)\}_{n>0}$ is cofinal in $\Comp^\Lambda(\mathcal{X})$.
	\item	For any $\alpha\in \ulMCor^\Lambda_k(\mathcal{X},\mathcal{Y})$ and $(\overline{Y},\overline{E},\Xi)\in \Comp^\Lambda(\mathcal{Y})$, there exists $(\overline{X},\overline{D},\Sigma)\in \Comp^\Lambda(\mathcal{X})$ such that 
	$$
		\alpha\in \MCor^\Lambda_k((\overline{X},\overline{D}+\Sigma),(\overline{Y},\overline{E}+\Xi)).
	$$
	\end{enumerate}
\end{lemma}

\begin{proof}
	For $\Lambda = \mathbb{Z}$, this is proved in \cite[Section 1.8]{KMSY1}.
	For $\Lambda = \mathbb{Q}$, we may reduce to the case $\Lambda = \mathbb{Z}$ as in the proof of Lemma \ref{lem:composition_admissible}.
\end{proof}

By Lemma \ref{cor:pre_embedding}, we get a fully faithful symmetric monoidal functor
$$
	\ulMCor^\Lambda_k\to \Pro(\MCor^\Lambda_k);\quad \mathcal{X}\mapsto \quotprojlim_{(\overline{X},\overline{D},\Sigma)\in \Comp^\Lambda(\mathcal{X})}(\overline{X},\overline{D}+\Sigma)
$$
which is pro-left adjoint to the inclusion $\tau\colon \MCor^\Lambda_k\to \ulMCor^\Lambda_k$.
Therefore the functor $\tau_!$ can be written explicitly as
$$
	\tau_! F(\mathcal{X}) = \varinjlim_{(\overline{X},\overline{D},\Sigma)\in \Comp^\Lambda(\mathcal{X})} F(\overline{X},\overline{D}+\Sigma).
$$
For $X\in \Sm_k$, we can choose $\mathcal{Y}\in \MCor^\Lambda_k$ with $Y^\circ\simeq X$.
Then $\{(Y,\emptyset,nD_Y)\}_{n>0}$ is cofinal in $\Comp^\Lambda(X,\emptyset)$, so we get
$$
	\omega_!F(X) = \varinjlim_{n>0}F(Y,nD_Y).
$$

\begin{proposition}\label{omega_star_and_shriek}
	The following assertions hold:
	\begin{enumerate}
		\item	The functor $\omega^*$ is exact and fully faithful.
		\item	The functor $\omega_!$ is exact and symmetric monoidal.
	\end{enumerate}
\end{proposition}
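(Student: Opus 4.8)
The plan is to deduce both statements from the explicit description of $\omega_!$ obtained above, namely that $\omega_!F(X)=\varinjlim_{n>0}F(Y,nD_Y)$ for $X\in\Sm_S$ and any $\mathcal{Y}\in\QSm_S^\prop$ with $Y^\circ\simeq X$, together with the observation that $\omega\colon\QCor_S^\prop\to\Cor_S$ is strictly symmetric monoidal.

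For (1): the functor $\omega^*$ is precomposition with $\omega$, and kernels, cokernels and images in $\PSh(\QCor_S^\prop)$ and $\PSh(\Cor_S)$ are computed objectwise, so $\omega^*$ is exact. For full faithfulness it suffices, by $\omega_!\dashv\omega^*$, to show that the counit $\omega_!\omega^*G\to G$ is an isomorphism for every $G\in\PSh(\Cor_S)$. Evaluating at $X\in\Sm_S$ we have $\omega(Y,nD_Y)=Y^\circ\simeq X$, and the transition morphisms of the pro-object $\comp(X)=\quotprojlim_{n>0}(Y,nD_Y)$ restrict to the identity on interiors; hence the formula above yields $\omega_!\omega^*G(X)=\varinjlim_{n>0}G(Y^\circ)=G(X)$, the colimit of a constant system, and one checks directly that the counit realizes this identification.

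For (2): being a left adjoint, $\omega_!$ preserves colimits and is in particular right exact; left exactness follows from the same formula, since a short exact sequence in $\PSh(\QCor_S^\prop)$ is objectwise exact, $\Comp(X)$ is cofiltered (Lemma~\ref{Comp_is_cofiltered}) with $\{(Y,\emptyset,nD_Y)\}_{n>0}$ cofinal (Lemma~\ref{cofinal_in_Comp}), and filtered colimits of abelian groups are exact. For the monoidal statement I would first record that $\omega$ is strictly symmetric monoidal: for $\mathcal{X},\mathcal{Y}\in\QSm_S^\prop$,
\[
\omega(\mathcal{X}\boxtimes\mathcal{Y})=(X\times_SY)\setminus|\pr_1^*D_X+\pr_2^*D_Y|=(X\setminus|D_X|)\times_S(Y\setminus|D_Y|)=\omega(\mathcal{X})\times_S\omega(\mathcal{Y}),
\]
$\omega(S)=S$, and on morphisms $\omega$ merely forgets the modulus condition and so preserves external products, while the associativity, symmetry and unit constraints on either side are induced by the canonical isomorphisms of fibre products. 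Since $\otimes$ on each presheaf category is the colimit-extension of the corresponding product on representables and $\omega_!$ preserves colimits, the structure morphisms $\omega_!\mathbb{Z}\to\mathbb{Z}$ and $\omega_!F\otimes\omega_!G\to\omega_!(F\otimes G)$ are isomorphisms: the first because $\omega_!\mathbb{Z}_\tr(S)=\mathbb{Z}_\tr(S)$, and the second by reducing, via colimit-preservation in $F$ and in $G$ separately, to the representable case $F=\mathbb{Z}_\tr(\mathcal{X})$, $G=\mathbb{Z}_\tr(\mathcal{Y})$, where it becomes the identity $\mathbb{Z}_\tr(\omega(\mathcal{X}\boxtimes\mathcal{Y}))=\mathbb{Z}_\tr(\omega(\mathcal{X}))\otimes\mathbb{Z}_\tr(\omega(\mathcal{Y}))$.

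The whole argument is formal; the only step that requires attention is verifying that these structure isomorphisms are compatible with the associativity, symmetry and unit constraints, which again reduces to the representable case and there follows from the strict monoidality of $\omega$. Alternatively, one may cite the universal property of presheaf categories with Day convolution as the free symmetric monoidal cocompletion, which upgrades the symmetric monoidal functor $\omega$ directly to a strong symmetric monoidal $\omega_!$. Either way, no serious obstacle arises.
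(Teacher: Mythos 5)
Your proof is correct and takes essentially the same route as the paper, only spelled out in more detail: the paper observes that $\omega\circ\comp\simeq\id$ gives $\omega_!\omega^*\simeq\id$ (since $\omega_!$ and $\omega^*$ are precomposition with $\comp$ and $\omega$ respectively), deduces full faithfulness of $\omega^*$ from that, and notes that $\omega_!$ is symmetric monoidal because it is the colimit extension of the symmetric monoidal $\omega$. Your unpacking of the counit via the explicit filtered-colimit formula, your exactness argument for $\omega_!$ via filtered colimits over $\Comp(X)^{\op}$, and your verification that $\omega$ is strictly monoidal on objects all fill in exactly what the paper leaves implicit.
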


\begin{proof}
	Both functors are clearly exact.
	Moreover, the above formula for $\omega_!$ implies that $\omega_!\omega^*\simeq\id$ and hence $\omega^*$ is fully faithful.
	Since $\omega$ is symmetric monoidal and $\omega_!$ is its extension by colimits, $\omega_!$ is also symmetric monoidal.
\end{proof}

\section{Cube invariance and reciprocity}\label{sec:ci-rec}
Fix $\Lambda\in \{\mathbb{Z},\mathbb{Q}\}$.
Following \cite{KSY2}, we introduce a class of $\Lambda$-modulus presheaves called \emph{cube invariant presheaves} which is an analogue of the class of $\mathbb{A}^1$-invariant presheaves used in the classical theory of motives.
This leads to the notion of \emph{$\Lambda$-reciprocity presheaf}, which is a presheaf with transfers that can be ``lifted'' to a cube invariant presheaf.
We show that the notion of $\mathbb{Q}$-reciprocity presheaf is actually the same as the notion of ($\mathbb{Z}$-)reciprocity presheaf.

\subsection{Cube invariance}

The object $\bcube:=(\mathbb{P}^1,[\infty])\in \MCor^\Lambda_k$ is called the {\it cube} over $k$.
We write $\pi\colon \bcube\to (\Spec k, \emptyset)$ for the ambient morphism given by the canonical projection $\mathbb{P}^1\to \Spec k$, and $i_\varepsilon\colon (\Spec k, \emptyset)\to \bcube\;(\varepsilon = 0,1)$ for the ambient morphism given by $\varepsilon\colon \Spec k\to \mathbb{P}^1$.

Let $\mathcal{X},\mathcal{Y}\in \MCor^\Lambda_k$.
Two morphisms $\alpha_0,\alpha_1\in \MCor^\Lambda_k(\mathcal{X}, \mathcal{Y})$ are called {\it cube homotopic} if there is some $\gamma\in \MCor^\Lambda_k(\mathcal{X}\otimes \bcube, \mathcal{Y})$ such that $\gamma\circ (\id\otimes i_\varepsilon)=\alpha_\varepsilon\;(\varepsilon = 0,1)$.
In this case we write $\alpha_0\sim \alpha_1$ and call $\gamma$ a \emph{cube homotopy} between $\alpha_0$ and $\alpha_1$.
We say that $\alpha\in \MCor^\Lambda_k(\mathcal{X}, \mathcal{Y})$ is a {\it cube homotopy equivalence} if there is some $\beta\in \MCor^\Lambda_k(\mathcal{Y},\mathcal{X})$ such that $\beta\circ \alpha\sim \id_{\mathcal{X}}$ and $\alpha\circ \beta\sim \id_{\mathcal{Y}}$ hold.
We call such $\beta$ a \emph{cube homotopy inverse} of $\alpha$.

\begin{lemma}
	The relation $\sim$ is an equivalence relation.
\end{lemma}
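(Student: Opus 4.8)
The plan is to verify the three axioms of an equivalence relation for $\sim$ on $c_S(\mathcal{X},\mathcal{Y})$, the only nontrivial one being transitivity. For reflexivity, given $\alpha\in c_S(\mathcal{X},\mathcal{Y})$, take $\gamma := \alpha\circ (\id_{\mathcal{X}}\boxtimes\pi)\in c_S(\mathcal{X}\boxtimes\bcube,\mathcal{Y})$; since $\pi\circ i_\varepsilon = \id_S$ for $\varepsilon=0,1$, we get $\gamma\circ(\id\boxtimes i_\varepsilon) = \alpha$, so $\alpha\sim\alpha$. For symmetry, suppose $\alpha_0\sim\alpha_1$ via $\gamma\in c_S(\mathcal{X}\boxtimes\bcube,\mathcal{Y})$. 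One wants to ``swap the endpoints'' $0$ and $1$ of $\bcube$. The obstacle here is that there is no ambient automorphism of $\bcube = (\mathbb{P}^1_S,\{\infty\})$ exchanging $0$ and $1$ while fixing $\infty$ (the M\"obius transformation $t\mapsto 1-t$ does this, and it preserves $\infty$, hence is a minimal ambient automorphism of $\bcube$). So in fact symmetry is easy: let $\sigma\colon\bcube\xrightarrow{\sim}\bcube$ be the ambient automorphism induced by $t\mapsto 1-t$; then $\gamma' := \gamma\circ(\id_{\mathcal{X}}\boxtimes\sigma)$ satisfies $\gamma'\circ(\id\boxtimes i_\varepsilon) = \gamma\circ(\id\boxtimes(\sigma\circ i_\varepsilon)) = \gamma\circ(\id\boxtimes i_{1-\varepsilon}) = \alpha_{1-\varepsilon}$, witnessing $\alpha_1\sim\alpha_0$.

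The main step is transitivity, and this is where I expect the real work to lie. Suppose $\alpha_0\sim\alpha_1$ via $\gamma\in c_S(\mathcal{X}\boxtimes\bcube,\mathcal{Y})$ and $\alpha_1\sim\alpha_2$ via $\delta\in c_S(\mathcal{X}\boxtimes\bcube,\mathcal{Y})$. The classical idea is to ``concatenate'' the two homotopies: reparametrize $\gamma$ to live on $[0,1/2]$ and $\delta$ on $[1/2,1]$ and glue. In the modulus-pair setting the clean way to do this is to use a correspondence on $\bcube\boxtimes\bcube$ (or a suitable subvariety) realizing a ``square'' whose four sides are $\alpha_0$, $\alpha_1$, $\alpha_2$, and one can contract. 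Concretely, I would form $\Theta := \gamma\boxtimes\delta$-type data and use the standard trick: consider the morphism (as modulus correspondence) $\mathbb{P}^1\times\mathbb{P}^1\dashrightarrow\mathbb{P}^1$ classifying the concatenation; one checks this gives a well-defined element of $c_S(\mathcal{X}\boxtimes\bcube,\mathcal{Y})$ by verifying the admissibility (weak-inequality) condition on the closure of each component via Lemma~\ref{lem:admissibility_comparison} and Lemma~\ref{lem:composition_admissible}. The divisor bookkeeping — that $\pr_1^*D_X + \pr_2^*\{\infty\}$ on the source still dominates $\pr^*D_Y$ on the closure after the reparametrization — is the crux; it works because the two pieces $\gamma,\delta$ each already satisfy admissibility and the gluing map is an isomorphism away from finitely many points lying in the modulus.

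Assembling: once the concatenated homotopy $\eta\in c_S(\mathcal{X}\boxtimes\bcube,\mathcal{Y})$ is constructed, one computes $\eta\circ(\id\boxtimes i_0) = \alpha_0$ and $\eta\circ(\id\boxtimes i_1) = \alpha_2$ directly from the construction (the endpoints of the concatenation are the outer endpoints of $\gamma$ and $\delta$), giving $\alpha_0\sim\alpha_2$. I expect the bulk of the writeup to be the verification that the concatenation correspondence is admissible and left proper, i.e. genuinely lies in $c_S(\mathcal{X}\boxtimes\bcube,\mathcal{Y})$; reflexivity and symmetry are one-line observations using the structure morphism $\pi$ and the involution $t\mapsto 1-t$ of $\bcube$ respectively. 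An alternative, possibly cleaner, route for transitivity is to avoid explicit reparametrization and instead use a cube-invariance-style argument: but since cube-invariance of presheaves is only introduced later, the direct geometric concatenation is the appropriate tool here.
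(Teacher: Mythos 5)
Your reflexivity and symmetry arguments are fine (the $t\mapsto 1-t$ involution is indeed a minimal ambient automorphism of $\bcube$ swapping $i_0$ and $i_1$), but your transitivity step is a plan, not a proof, and it is aimed at the wrong target. You acknowledge yourself that ``the bulk of the writeup'' would be checking admissibility and left-properness of a concatenation correspondence $\mathbb{P}^1\times\mathbb{P}^1\dashrightarrow\mathbb{P}^1$ --- but no such verification appears, and it is genuinely unclear how to carry it out: there is no morphism of schemes realizing topological concatenation on $\mathbb{A}^1$, and building the required correspondence by hand while controlling the modulus on its closure in $\mathbb{P}^1\times_S\mathbb{P}^1\times_SY$ is exactly the kind of bookkeeping that the framework is designed to avoid. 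So as written this is a gap.

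The point you missed is that $c_S(\mathcal{X}\boxtimes\bcube,\mathcal{Y})$ is an abelian group, so you can \emph{subtract} homotopies rather than glue them. The paper's proof of transitivity is a one-liner: if $\gamma_{01}$ witnesses $\alpha_0\sim\alpha_1$ and $\gamma_{12}$ witnesses $\alpha_1\sim\alpha_2$, then
$$
\gamma_{01}+\gamma_{12}-\alpha_1\circ(\id\boxtimes\pi)\in c_S(\mathcal{X}\boxtimes\bcube,\mathcal{Y})
$$
is automatically a modulus correspondence (being a $\mathbb{Z}$-linear combination of ones), and precomposing with $\id\boxtimes i_0$ gives $\alpha_0+\alpha_1-\alpha_1=\alpha_0$ while precomposing with $\id\boxtimes i_1$ gives $\alpha_1+\alpha_2-\alpha_1=\alpha_2$. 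The same trick also disposes of symmetry ($\alpha_0\circ(\id\boxtimes\pi)+\alpha_1\circ(\id\boxtimes\pi)-\gamma$ reverses a homotopy), so your $t\mapsto 1-t$ argument, though correct, is unnecessary. The moral is that in an additive ambient category, homotopy should be treated additively from the start; importing the topological concatenation picture here creates difficulties that do not exist.
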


\begin{proof}
	Let $\mathcal{X},\mathcal{Y}\in \MCor^\Lambda_k$ and $\alpha_i\in \MCor^\Lambda_k(\mathcal{X},\mathcal{Y})\;(i=0,1,2)$.
	Let $\gamma_{01}\in \MCor^\Lambda_k(\mathcal{X}\otimes \bcube, \mathcal{Y})$ (resp. $\gamma_{12}\in \MCor^\Lambda_k(\mathcal{X}\otimes \bcube, \mathcal{Y})$) be a cube homotopy between $\alpha_0$ and $\alpha_1$ (resp. $\alpha_1$ and $\alpha_2$).
	Then
		$\gamma_{01}+\gamma_{12}-\alpha_1\circ (\id\otimes \pi)\colon \mathcal{X}\otimes \bcube\to \mathcal{Y}$
	gives a cube homotopy between $\alpha_0$ and $\alpha_2$.
\end{proof}

\begin{lemma}\label{cube_multiplication_admissible}
	Consider the multiplication map
	$\mu\colon \mathbb{A}^1\times \mathbb{A}^1\to \mathbb{A}^1;\;(x,y)\mapsto xy$.
	Then the graph of $\mu$ gives an element of $\MCor^\mathbb{Z}_k(\bcube\otimes\bcube,\bcube) = \MCor^\mathbb{Q}_k(\bcube\otimes\bcube,\bcube)$.
\end{lemma}

\begin{proof}
	See \cite[Lemma 5.1.1]{KMSY3}. 
\end{proof}

\begin{lemma}\label{cube_equivalence}
	For any $\mathcal{X}\in \MCor^\Lambda_k$, the modulus correspondence
	$\id\otimes \pi \in \MCor^\Lambda_k(\mathcal{X}\otimes \bcube, \mathcal{X})$ is a cube homotopy equivalence.
\end{lemma}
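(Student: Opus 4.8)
The plan is to exhibit an explicit cube-homotopy inverse, namely the section $\id_{\mathcal{X}}\boxtimes i_0\in c_S(\mathcal{X},\mathcal{X}\boxtimes\bcube)$ at $0\in\mathbb{A}^1\subset\mathbb{P}^1$, and to check the two relations in the definition of a cube-homotopy equivalence.

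First I would dispatch the easy relation: since $\pi\circ i_0=\id_S$ as ambient morphisms, functoriality of $\boxtimes$ gives $(\id_{\mathcal{X}}\boxtimes\pi)\circ(\id_{\mathcal{X}}\boxtimes i_0)=\id_{\mathcal{X}}\boxtimes(\pi\circ i_0)=\id_{\mathcal{X}}$, so this composite is equal to the identity on the nose, a fortiori cube-homotopic to it.

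The real content is the other relation: the composite $(\id_{\mathcal{X}}\boxtimes i_0)\circ(\id_{\mathcal{X}}\boxtimes\pi)\colon\mathcal{X}\boxtimes\bcube\to\mathcal{X}\boxtimes\bcube$, which forgets the $\mathbb{P}^1$-coordinate and reinserts $0$, must be cube-homotopic to $\id_{\mathcal{X}\boxtimes\bcube}$. The homotopy will come from the multiplication on the cube: by Lemma \ref{cube_multiplication_admissible}, the graph of $\mu\colon\mathbb{A}^1_S\times_S\mathbb{A}^1_S\to\mathbb{A}^1_S$ is a modulus correspondence $\bcube\boxtimes\bcube\to\bcube$, and since $\boxtimes$ is a symmetric monoidal structure on $\QCor_S$ the box product $\gamma:=\id_{\mathcal{X}}\boxtimes\mu$ is a modulus correspondence from $(\mathcal{X}\boxtimes\bcube)\boxtimes\bcube$ to $\mathcal{X}\boxtimes\bcube$, where I read the first cube factor as a coordinate of the source modulus pair and the second as the homotopy parameter. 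Using $\mu\circ(\id_{\bcube}\boxtimes i_1)=\id_{\bcube}$ and $\mu\circ(\id_{\bcube}\boxtimes i_0)=i_0\circ\pi$ together with functoriality of $\boxtimes$, I would then compute $\gamma\circ(\id_{\mathcal{X}\boxtimes\bcube}\boxtimes i_1)=\id_{\mathcal{X}\boxtimes\bcube}$ and $\gamma\circ(\id_{\mathcal{X}\boxtimes\bcube}\boxtimes i_0)=(\id_{\mathcal{X}}\boxtimes i_0)\circ(\id_{\mathcal{X}}\boxtimes\pi)$. Thus $\gamma$ is a cube-homotopy between the two maps in question, and combining this with the first step shows that $\id_{\mathcal{X}}\boxtimes\pi$ is a cube-homotopy equivalence with cube-homotopy inverse $\id_{\mathcal{X}}\boxtimes i_0$.

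What remains is bookkeeping, and I do not expect a genuine obstacle. One checks that $\gamma$ really lies in $c_S$: left-properness of its components is automatic because the target $\mathcal{X}\boxtimes\bcube$ is proper over $S$ (here we use $\mathcal{X}\in\QSm_S^\prop$), and admissibility is inherited from that of $\mu$ through the box-product construction; composability of the various composites is Lemma \ref{lem:composition_admissible}. The mildest subtlety is to keep track of the associativity and symmetry isomorphisms of $\boxtimes$ so that ``$\id_{\mathcal{X}}\boxtimes\mu$'' is parsed with the homotopy variable in the last slot and the endpoint evaluations $\gamma\circ(\id_{\mathcal{X}\boxtimes\bcube}\boxtimes i_\varepsilon)$ come out as asserted; this is purely formal, and the argument is the usual ``contraction via multiplication'' used throughout the theory of modulus sheaves.
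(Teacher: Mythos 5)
Your proof is correct and takes essentially the same approach as the paper: exhibit $\id_{\mathcal{X}}\boxtimes i_0$ as cube-homotopy inverse, observe that one composite is the identity on the nose, and use $\gamma=\id_{\mathcal{X}}\boxtimes\mu$ (from Lemma \ref{cube_multiplication_admissible}) as the cube-homotopy connecting the other composite to the identity, checking the two endpoint evaluations.
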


\begin{proof}
	Let us prove that $\id\otimes i_0\colon \mathcal{X}\to \mathcal{X}\otimes\bcube$ gives a cube homotopy inverse.
	The composition $(\id\otimes \pi)\circ (\id\otimes i_0)$ is the identity.
	Set
	$\gamma:=\id\otimes \mu\colon \mathcal{X}\otimes \bcube\otimes \bcube \to \mathcal{X}\otimes \bcube$.
	Then $\gamma\circ (\id\otimes i_0)=(\id\otimes i_0)\circ (\id\otimes \pi)$ and $\gamma\circ (\id\otimes i_1)=\id$, so we have $(\id\otimes i_0)\circ (\id\otimes \pi)\sim \id$.
\end{proof}

\begin{definition}
We say that $F\in \PSh(\MCor^\Lambda_k)$ is {\it cube invariant} if the map
$(\id \otimes \pi)^*\colon F(\mathcal{X})\to F(\mathcal{X}\otimes \bcube)$
is an isomorphism for all $\mathcal{X}\in \MCor^\Lambda_k$.
\end{definition}

\begin{lemma}\label{cube_invariance_characterization}
	For $F\in \PSh(\MCor^\Lambda_k)$, the following conditions are equivalent:
	\begin{enumerate}
		\item $F$ is cube invariant.
		\item For any $\mathcal{X}\in \MCor^\Lambda_k$, the map $(\id \otimes i_0)^*\colon F(\mathcal{X}\otimes \bcube)\to F(\mathcal{X})$ is injective.
		\item For any $\mathcal{X}\in \MCor^\Lambda_k$, the map $(\id \otimes i_0)^*-(\id\otimes i_1)^*\colon F(\mathcal{X}\otimes \bcube)\to F(\mathcal{X})$ is $0$.
		\item For any two cube homotopic morphisms $\alpha_0,\alpha_1$ in $\MCor^\Lambda_k$, we have $\alpha_0^*=\alpha_1^*$ on $F$.
	\end{enumerate}
\end{lemma}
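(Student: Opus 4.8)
The plan is to establish the cycle $(1)\Leftrightarrow(2)$, $(1)\Rightarrow(3)$, $(3)\Leftrightarrow(4)$, and $(4)\Rightarrow(1)$. Apart from the last implication, which invokes Lemma~\ref{cube_equivalence}, every step is a formal consequence of the retraction identity $(\id\boxtimes\pi)\circ(\id\boxtimes i_\varepsilon)=\id_{\mathcal{X}}$ (coming from $\pi\circ i_\varepsilon=\id_S$ and the fact that $S$ is the monoidal unit) together with the contravariant functoriality of $\mathcal{X}\mapsto F(\mathcal{X})$. I do not expect a genuine obstacle: the only non-formal ingredient is the contractibility of the cube via the multiplication map (Lemma~\ref{cube_multiplication_admissible}), which is already packaged into Lemma~\ref{cube_equivalence}; everything else amounts to bookkeeping of which $\bcube$-factor serves as the homotopy parameter.

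For $(1)\Leftrightarrow(2)$ and $(1)\Rightarrow(3)$: the identity above shows that $(\id\boxtimes i_\varepsilon)^*$ is a retraction of $(\id\boxtimes\pi)^*$, hence surjective, for $\varepsilon=0,1$. If $(2)$ holds then $(\id\boxtimes i_0)^*$ is also injective, so bijective, and the relation $(\id\boxtimes i_0)^*\circ(\id\boxtimes\pi)^*=\id$ forces $(\id\boxtimes\pi)^*=\bigl((\id\boxtimes i_0)^*\bigr)^{-1}$ to be an isomorphism, i.e.\ $(1)$. Conversely, if $(1)$ holds then each $(\id\boxtimes i_\varepsilon)^*$, being a one-sided inverse of the isomorphism $(\id\boxtimes\pi)^*$, is itself an isomorphism, in particular injective (which is $(2)$) and equal to $\bigl((\id\boxtimes\pi)^*\bigr)^{-1}$; in particular $(\id\boxtimes i_0)^*=(\id\boxtimes i_1)^*$, which is $(3)$.

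For $(3)\Leftrightarrow(4)$: given cube-homotopic $\alpha_0,\alpha_1\in c_S(\mathcal{X},\mathcal{Y})$ with cube-homotopy $\gamma\in c_S(\mathcal{X}\boxtimes\bcube,\mathcal{Y})$, functoriality gives $\alpha_\varepsilon^*=(\id_{\mathcal{X}}\boxtimes i_\varepsilon)^*\circ\gamma^*$ on $F$, so $(3)$ applied to $\mathcal{X}$ yields $\alpha_0^*=\alpha_1^*$, which is $(4)$. For the converse, observe that $\id_{\mathcal{X}}\boxtimes i_0$ and $\id_{\mathcal{X}}\boxtimes i_1$, viewed as morphisms $\mathcal{X}\to\mathcal{X}\boxtimes\bcube$ in $\QCor_S^\prop$, are cube-homotopic via $\id_{\mathcal{X}\boxtimes\bcube}$ (its restrictions to the two endpoints of the homotopy parameter are exactly these two maps), so $(4)$ gives $(\id\boxtimes i_0)^*=(\id\boxtimes i_1)^*$, which is $(3)$. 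Finally, for $(4)\Rightarrow(1)$: by Lemma~\ref{cube_equivalence} the morphism $\id\boxtimes\pi\in c_S(\mathcal{X}\boxtimes\bcube,\mathcal{X})$ is a cube-homotopy equivalence, say with cube-homotopy inverse $\beta$; applying $F$ and using $(4)$ to convert the cube-homotopies $\beta\circ(\id\boxtimes\pi)\sim\id$ and $(\id\boxtimes\pi)\circ\beta\sim\id$ into equalities $(\id\boxtimes\pi)^*\circ\beta^*=\id$ and $\beta^*\circ(\id\boxtimes\pi)^*=\id$ of pullback maps, we conclude that $(\id\boxtimes\pi)^*$ is an isomorphism, i.e.\ $(1)$.

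If one prefers to avoid Lemma~\ref{cube_equivalence}, the only implication requiring real work is $(3)\Rightarrow(1)$, which is the step where the geometry actually enters: set $\gamma:=\id_{\mathcal{X}}\boxtimes\mu\in c_S(\mathcal{X}\boxtimes\bcube\boxtimes\bcube,\mathcal{X}\boxtimes\bcube)$, which is a modulus correspondence by Lemma~\ref{cube_multiplication_admissible} together with the compatibility of $\boxtimes$ with modulus correspondences; using $\mu(x,1)=x$ and $\mu(x,0)=0$ one gets $\gamma\circ(\id\boxtimes\id\boxtimes i_1)=\id_{\mathcal{X}\boxtimes\bcube}$ and $\gamma\circ(\id\boxtimes\id\boxtimes i_0)=(\id\boxtimes i_0)\circ(\id\boxtimes\pi)$. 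Applying $(3)$ to the modulus pair $\mathcal{X}\boxtimes\bcube$ (with the last $\bcube$ as interval) then yields, for every $a\in F(\mathcal{X}\boxtimes\bcube)$, the identity $a=(\id\boxtimes\pi)^*(\id\boxtimes i_0)^*a$; hence $(\id\boxtimes\pi)^*$ is surjective, and it is injective by the retraction identity, so it is an isomorphism.
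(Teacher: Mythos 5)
Your proof is correct and follows the same structure as the paper's: the retraction identity $(\id\boxtimes\pi)\circ(\id\boxtimes i_\varepsilon)=\id$ drives $(1)\Leftrightarrow(2)$ and $(1)\Rightarrow(3)$, the homotopy $\gamma$ drives $(3)\Rightarrow(4)$, and Lemma~\ref{cube_equivalence} drives $(4)\Rightarrow(1)$. The extra observations you add -- the direct proof of $(4)\Rightarrow(3)$ via the homotopy $\id_{\mathcal{X}\boxtimes\bcube}$, and the lemma-free proof of $(3)\Rightarrow(1)$ via $\id\boxtimes\mu$ -- are both correct, but the latter is essentially a restatement of the computation inside Lemma~\ref{cube_equivalence} rather than a genuinely different route.
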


\begin{proof}
	(1) $\iff$ (2) follows from $\pi\circ i_0=\id$.
	(1)$\implies$(3) follows from $\pi\circ i_0=\pi\circ i_1 = \id$.
	Let us prove (3)$\implies$(4).
	Let $\gamma\colon \mathcal{X}\otimes \bcube\to \mathcal{Y}$ be a cube homotopy between $\alpha_0$ and $\alpha_1$.
	Then we have $\alpha_0^*=(\id\otimes i_0)^*\gamma^*=(\id\otimes i_1)^*\gamma^*=\alpha_1^*$ on $F$.
	This shows that (3)$\implies$(4).
	If (4) holds, then for any cube homotopy equivalence $\alpha\colon \mathcal{X}\to\mathcal{Y}$ in $\MCor^\Lambda_k$, the map $\alpha^*\colon F(\mathcal{Y})\to F(\mathcal{X})$ is an isomorphism.
	Therefore (1) follows from Lemma \ref{cube_equivalence}.
\end{proof}

\begin{lemma}
	The class of cube invariant objects in $\PSh(\MCor^\Lambda_k)$ is closed under taking subobjects, quotients and extensions.
\end{lemma}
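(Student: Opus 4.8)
The plan is to reduce everything to the equivalent formulations of cube-invariance established in Lemma \ref{cube_invariance_characterization}, exploiting that $\PSh(\QCor_S^\prop)$ is an abelian category in which kernels, cokernels and short exact sequences are computed objectwise, being a category of additive functors valued in $\Ab$.

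First I would handle subobjects. If $G$ is a subobject of a cube-invariant $F$, then for every $\mathcal{X}\in \QSm_S^\prop$ the inclusion $G(\mathcal{X}\boxtimes\bcube)\hookrightarrow F(\mathcal{X}\boxtimes\bcube)$ is compatible with $(\id\boxtimes i_0)^*$, so the map $(\id\boxtimes i_0)^*\colon G(\mathcal{X}\boxtimes\bcube)\to G(\mathcal{X})$ is the restriction of an injective map and is therefore injective; condition (2) of Lemma \ref{cube_invariance_characterization} then gives that $G$ is cube-invariant. For quotients $F\twoheadrightarrow H$ I would instead use condition (3): since $F(\mathcal{X}\boxtimes\bcube)\twoheadrightarrow H(\mathcal{X}\boxtimes\bcube)$ is surjective and $(\id\boxtimes i_0)^*-(\id\boxtimes i_1)^*$ vanishes on $F$, a one-line diagram chase (lift a section of $H$ to one of $F$) shows this difference vanishes on $H$ as well, so $H$ is cube-invariant, again by Lemma \ref{cube_invariance_characterization}.

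For extensions, given a short exact sequence $0\to F'\to F\to F''\to 0$ with $F'$ and $F''$ cube-invariant, I would evaluate at $\mathcal{X}$ and at $\mathcal{X}\boxtimes\bcube$ and form the commutative ladder of two short exact sequences joined by the maps $(\id\boxtimes\pi)^*$; the two outer vertical arrows are isomorphisms by hypothesis, so the five lemma forces the middle arrow $(\id\boxtimes\pi)^*\colon F(\mathcal{X})\to F(\mathcal{X}\boxtimes\bcube)$ to be an isomorphism, i.e.\ $F$ is cube-invariant.

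None of the three steps presents a genuine difficulty; the only point requiring care — and the closest thing to an obstacle — is the observation that exactness in $\PSh(\QCor_S^\prop)$ may be checked objectwise, which is what permits translating each closure property into a statement about the abelian groups $F(\mathcal{X})$ and $F(\mathcal{X}\boxtimes\bcube)$ and then invoking Lemma \ref{cube_invariance_characterization} or the five lemma.
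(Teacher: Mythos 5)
Your proof is correct and takes essentially the same route as the paper: the paper also derives closure under subobjects from the equivalence (1)$\iff$(2) of Lemma \ref{cube_invariance_characterization} and then handles the remaining cases by the five lemma. The only cosmetic difference is that for quotients you argue directly via condition (3), whereas the paper subsumes this into the five-lemma step after having established the subobject case.
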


\begin{proof}
	The claim for subobjects follows from the equivalence of (1) and (2) in Lemma \ref{cube_invariance_characterization}.
	The remaining assertions then follow by the five lemma.
\end{proof}

There are two canonical ways to make a presheaf on $\MCor^\Lambda_k$ cube invariant: one is to take the maximal cube invariant quotient, and the other is to take the maximal cube invariant subobject.
The former is called the \emph{cube-localization} and the latter is called the \emph{cube invariant part}.

\begin{definition}
	The \emph{cube-localization} of $F\in \PSh(\MCor^\Lambda_k)$ is defined by
	$$
		\Lcube F (\mathcal{X}) := \Coker (F(\mathcal{X}\otimes \bcube)\xrightarrow{i_0^*-i_1^*}F(\mathcal{X})).
	$$
	There is a canonical epimorphism $F\twoheadrightarrow \Lcube F$.
	We write $\Lcube(\mathcal{X})$ for $\Lcube \mathbb{Z}_\tr(\mathcal{X})$.
\end{definition}

\begin{lemma}\label{properties_of_lower_h0}
	The following assertions hold.
	\begin{enumerate}
		\item For any $F\in \PSh(\MCor^\Lambda_k)$, the quotient $\Lcube F$ of $F$ is cube invariant.
		\item Let $F,G\in \PSh(\MCor^\Lambda_k)$.
		If $G$ is cube invariant, then the canonical homomorphism
		$\Hom(\Lcube F,G)\to \Hom(F,G)$
		is an isomorphism.
		In other words, $\Lcube F$ is the maximal cube invariant quotient of $F$.
		\item For any $\mathcal{X}\in \MCor^\Lambda_k$, the morphism $\Lcube(\mathcal{X}\otimes \bcube)\to \Lcube (\mathcal{X})$ induced by $\id\otimes \pi$ is an isomorphism.
	\end{enumerate}
\end{lemma}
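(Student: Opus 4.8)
The three assertions are all formal consequences of Lemma \ref{cube_invariance_characterization} and Lemma \ref{cube_equivalence}, and I would prove them in this order. For (1), I would verify that $\Lcube F$ satisfies criterion (3) of Lemma \ref{cube_invariance_characterization}. Note first that $d=(\id\boxtimes i_0)^*-(\id\boxtimes i_1)^*$ is a morphism of presheaves $F(-\boxtimes\bcube)\to F$ (by functoriality of $\boxtimes$), so that $\Lcube F=\Coker(d)$ is a presheaf and $F\twoheadrightarrow\Lcube F$ is the cokernel projection. Now fix $\mathcal{X}\in\QSm_S^\prop$. An element of $\Lcube F(\mathcal{X}\boxtimes\bcube)$ lifts to some $a\in F(\mathcal{X}\boxtimes\bcube)$, and, the transition maps of $\Lcube F$ being induced by those of $F$, its image under $(\id\boxtimes i_0)^*-(\id\boxtimes i_1)^*$ is the class of $d_{\mathcal{X}}(a)\in F(\mathcal{X})$ in $\Lcube F(\mathcal{X})=\Coker(d_{\mathcal{X}})$, which is zero.

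For (2), let $\phi\colon F\to G$ with $G$ cube-invariant. Uniqueness of a factorization through the epimorphism $F\twoheadrightarrow\Lcube F$ is automatic, so it suffices to show that the composite $F(-\boxtimes\bcube)\xrightarrow{d}F\xrightarrow{\phi}G$ vanishes. Naturality of $\phi$ gives $\phi_{\mathcal{X}}\circ(\id\boxtimes i_\varepsilon)^*=(\id\boxtimes i_\varepsilon)^*\circ\phi_{\mathcal{X}\boxtimes\bcube}$, whence $\phi_{\mathcal{X}}\circ d_{\mathcal{X}}=\bigl((\id\boxtimes i_0)^*-(\id\boxtimes i_1)^*\bigr)\circ\phi_{\mathcal{X}\boxtimes\bcube}$, which is $0$ because $G$ is cube-invariant (criterion (3) of Lemma \ref{cube_invariance_characterization} applied to $G$). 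Together with (1), this exhibits $\Lcube F$ as the maximal cube-invariant quotient of $F$.

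For (3), I would combine (1), (2) and Lemma \ref{cube_equivalence}. By (2) and the Yoneda lemma, $\Hom(\Lcube(\mathcal{Y}),G)\simeq G(\mathcal{Y})$ naturally in $\mathcal{Y}\in\QSm_S^\prop$ for every cube-invariant $G$; under this identification, $\Lcube(\id\boxtimes\pi)^*\colon\Hom(\Lcube(\mathcal{X}),G)\to\Hom(\Lcube(\mathcal{X}\boxtimes\bcube),G)$ becomes $(\id\boxtimes\pi)^*\colon G(\mathcal{X})\to G(\mathcal{X}\boxtimes\bcube)$. Since $\id\boxtimes\pi$ is a cube-homotopy equivalence (Lemma \ref{cube_equivalence}), this last map is an isomorphism for every cube-invariant $G$ (as recorded in the proof of Lemma \ref{cube_invariance_characterization}). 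By (1), both $\Lcube(\mathcal{X})$ and $\Lcube(\mathcal{X}\boxtimes\bcube)$ are themselves cube-invariant, so applying the previous sentence with $G$ equal to each of them and running the usual Yoneda argument forces $\Lcube(\id\boxtimes\pi)$ to be an isomorphism.

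I do not expect any genuine obstacle here: the content is entirely formal, with no geometric input. The one place that calls for care is the final step, where one must not conflate "$\Lcube(\id\boxtimes\pi)^*$ is an isomorphism on all cube-invariant $G$" with "$\Lcube(\id\boxtimes\pi)$ is an isomorphism"; the bridge between the two is the Yoneda lemma applied inside the full subcategory of cube-invariant presheaves, which is legitimate precisely because $\Lcube$ sends representables to cube-invariant objects by (1).
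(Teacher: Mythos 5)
Your proposal is correct and follows essentially the same route as the paper: part (1) verifies criterion (3) of Lemma \ref{cube_invariance_characterization}, and part (3) is exactly the Yoneda argument the paper uses, with the same caveat you flag (that Yoneda must be run inside the full subcategory of cube-invariant presheaves, which is legitimate because (1) shows both sides are cube-invariant). The only minor stylistic difference is in (2): you verify $\phi\circ d=0$ directly from naturality, whereas the paper instead notes that $G\to\Lcube G$ is an isomorphism and invokes functoriality of $\Lcube$; both are elementary and equivalent.
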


\begin{proof}
		(1) follows from the equivalence of (1) and (3) in Lemma \ref{cube_invariance_characterization}.
		(2)	The canonical morphism $G\to \Lcube G$ is an isomorphism by Lemma \ref{cube_invariance_characterization}.
		The claim follows from this and (1).	
		(3)	By the Yoneda lemma, it suffices to prove that if $F\in \PSh(\MCor^\Lambda_k)$ is cube invariant then
				$$
					\Hom(\Lcube(\mathcal{X}),F)\to \Hom(\Lcube(\mathcal{X}\otimes \bcube),F)
				$$
				is an isomorphism.
				By (2), this map can be identified with $(\id\otimes \pi)^*\colon F(\mathcal{X})\to F(\mathcal{X}\otimes \bcube)$,
				which is an isomorphism since $F$ is cube invariant.
\end{proof}

\begin{definition}
	The \emph{cube invariant part} of $F\in \PSh(\MCor^\Lambda_k)$ is defined by
	$$
		\Rcube  F (\mathcal{X}) := \Hom(\Lcube(\mathcal{X}), F).
	$$
\end{definition}

\begin{lemma}\label{properties_of_upper_h0}
	The following assertions hold.
	\begin{enumerate}
		\item	For any $F\in \PSh(\MCor^\Lambda_k)$, the subobject $\Rcube F$ of $F$ is cube invariant.
		\item	Let $F,G\in \PSh(\MCor^\Lambda_k)$.
		If $F$ is cube invariant, then the canonical homomorphism
		$\Hom(F,\Rcube G)\to \Hom(F,G)$
		is an isomorphism.
		In other words, $\Rcube F$ is the maximal cube invariant subobject of $F$.
		\item	The functor $\Rcube\colon \PSh(\MCor^\Lambda_k)\to \PSh(\MCor^\Lambda_k)$ is right adjoint to $\Lcube$.
	\end{enumerate}
\end{lemma}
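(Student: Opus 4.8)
The plan is to establish the three assertions in the order (1), (3), (2), since part~(2) comes out most transparently as a formal consequence of the adjunction in~(3). As a preliminary remark, $\Lcube(-)=\Lcube\mathbb{Z}_\tr(-)$ is a covariant functor $\QCor_S^\prop\to\PSh(\QCor_S^\prop)$, so $\Rcube F=\Hom(\Lcube(-),F)$ is genuinely a presheaf on $\QCor_S^\prop$. For~(1) I would first exhibit the natural monomorphism $\Rcube F\hookrightarrow F$: the canonical epimorphism $\mathbb{Z}_\tr(\mathcal{X})\twoheadrightarrow\Lcube(\mathcal{X})$ induces, after applying $\Hom(-,F)$ and using the Yoneda lemma, a homomorphism
\[
\Rcube F(\mathcal{X})=\Hom(\Lcube(\mathcal{X}),F)\longrightarrow\Hom(\mathbb{Z}_\tr(\mathcal{X}),F)=F(\mathcal{X}),
\]
which is injective precisely because $\mathbb{Z}_\tr(\mathcal{X})\twoheadrightarrow\Lcube(\mathcal{X})$ is an epimorphism, and which is natural in $\mathcal{X}$. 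Cube-invariance of $\Rcube F$ is then immediate from Lemma~\ref{properties_of_lower_h0}~(3): applying $\Hom(-,F)$ to the isomorphism $\Lcube(\mathcal{X}\boxtimes\bcube)\xrightarrow{\sim}\Lcube(\mathcal{X})$ identifies $(\id\boxtimes\pi)^*\colon\Rcube F(\mathcal{X})\to\Rcube F(\mathcal{X}\boxtimes\bcube)$ with an isomorphism.

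For~(3), the plan hinges on the observation that $\Lcube$ preserves all colimits: colimits in $\PSh(\QCor_S^\prop)$ are computed sectionwise and commute with the cokernel defining $\Lcube$. Consequently both functors $F\mapsto\Hom(\Lcube F,G)$ and $F\mapsto\Hom(F,\Rcube G)$ carry colimits in $F$ to limits, so, writing an arbitrary $F$ as a colimit of representables, it suffices to produce a natural isomorphism $\Hom(\Lcube\mathbb{Z}_\tr(\mathcal{X}),G)\cong\Hom(\mathbb{Z}_\tr(\mathcal{X}),\Rcube G)$ for $\mathcal{X}\in\QSm_S^\prop$. But the left-hand side is $\Hom(\Lcube(\mathcal{X}),G)$ and the right-hand side is, by Yoneda, $\Rcube G(\mathcal{X})=\Hom(\Lcube(\mathcal{X}),G)$, so the two coincide on the nose; naturality in $\mathcal{X}$ (hence in $F$) and in $G$ is clear. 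This yields $\Lcube\dashv\Rcube$.

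For~(2), injectivity of $\Hom(F,\Rcube G)\to\Hom(F,G)$ is formal from the monomorphism of~(1). For surjectivity I would take $f\colon F\to G$ with $F$ cube-invariant, factor it as $F\twoheadrightarrow F'\hookrightarrow G$ through its image $F'$ (which is cube-invariant, since cube-invariant presheaves are closed under quotients), and then check that every cube-invariant subobject $F'\subseteq G$ is contained in $\Rcube G$: given $a\in F'(\mathcal{X})$, the corresponding map $\mathbb{Z}_\tr(\mathcal{X})\to F'$ factors uniquely through $\Lcube(\mathcal{X})$ by the universal property of $\Lcube$ (Lemma~\ref{properties_of_lower_h0}~(2), applicable since $F'$ is cube-invariant), and composing with $F'\hookrightarrow G$ displays $a$ as an element of $\Rcube G(\mathcal{X})\subseteq G(\mathcal{X})$. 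Alternatively, using~(3) directly: the composite $\Hom(\Lcube F,G)\xrightarrow{\sim}\Hom(F,\Rcube G)\to\Hom(F,G)$ is restriction along the canonical epimorphism $F\twoheadrightarrow\Lcube F$, which is an isomorphism when $F$ is cube-invariant by Lemma~\ref{cube_invariance_characterization}.

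I expect the only genuine friction to be bookkeeping rather than mathematical content: verifying that the abstract arrows produced above really are the ``canonical'' maps named in the statement — in particular that the adjunction bijection of~(3), precomposed with $F\to\Lcube F$, agrees with post-composition by the inclusion $\Rcube G\hookrightarrow G$ — and being careful that the reduction to representables in~(3) is legitimate, which is exactly the point where colimit-preservation of $\Lcube$ is used.
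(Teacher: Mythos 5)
Your proof is correct, and it reaches all three assertions, but it organizes the argument differently from the paper. The paper proves (1), then (2), then (3): it establishes (1) from Lemma~\ref{properties_of_lower_h0}~(3) exactly as you do; for (2) it notes that the monomorphism $\Rcube F\hookrightarrow F$ of (1) is an \emph{isomorphism} when $F$ is cube-invariant (unwinding the definition, this is just Lemma~\ref{properties_of_lower_h0}~(2) applied sectionwise), and then deduces the universal property from functoriality of $\Rcube$ and naturality of that inclusion; finally it gets (3) for free by chaining the two universal properties $\Hom(\Lcube F,G)\simeq\Hom(\Lcube F,\Rcube G)\simeq\Hom(F,\Rcube G)$. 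You instead prove (3) directly by colimit-extension from representables (using that $\Lcube$ preserves colimits, since colimits of presheaves are computed sectionwise and commute with the defining cokernel), and then obtain (2) by the image-factorization argument. Both routes are sound. The paper's ordering is a bit slicker because (3) becomes a purely formal consequence; yours is more self-contained on (3) and avoids the somewhat terse step ``the claim follows from this and~(1),'' at the cost of the mild bookkeeping you already flag — confirming that the adjunction bijection you build by extension along Yoneda is implemented by the unit $F\twoheadrightarrow\Lcube F$ and the counit $\Rcube G\hookrightarrow G$ from (1). That compatibility does hold and is a routine check, so there is no gap.
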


\begin{proof}
	(1) follows from Lemma \ref{properties_of_lower_h0} (3).
	(2) If $F\in \PSh(\MCor^\Lambda_k)$ is cube invariant, then the canonical morphism $\Rcube F\to F$ is an isomorphism.
		The claim follows from this and (1).	
	(3) follows from (2) and Lemma \ref{properties_of_lower_h0} (2).
\end{proof}

\begin{remark}
	In \cite{KSY2}, $\Lcube$ and $\Rcube$ are defined to be functors taking values in the category of cube invariant presheaves.
	On the other hand, we define $\Lcube$ and $\Rcube$ as \emph{endofunctors} of $\PSh(\MCor^\Lambda_k)$.
	This has the advantage that $\Rcube$ becomes right adjoint to $\Lcube$.
\end{remark}

\begin{lemma}\label{lem:h0_tensor}
	Let $F,G\in \PSh(\MCor^\Lambda_k)$.
	\begin{enumerate}
		\item	If $G$ is cube invariant, then $\intHom(F,G)$ is also cube invariant.
		\item	The canonical epimorphism $\Lcube(F\otimes G)\twoheadrightarrow \Lcube((\Lcube F)\otimes G)$ is an isomorphism.
	\end{enumerate}
\end{lemma}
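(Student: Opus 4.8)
The plan is to deduce both parts formally, from the tensor--hom adjunction of the closed symmetric monoidal structure on $\PSh(\QCor_S^\prop)$ together with the defining property of cube-invariance (isomorphy of $(\id\boxtimes\pi)^*$) and the universal property of $\Lcube$ recorded in Lemma~\ref{properties_of_lower_h0}(2).

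For (1), the key move is to rewrite the presheaf $\mathcal{X}\mapsto\intHom(F,G)(\mathcal{X}\boxtimes\bcube)$. Since $\mathbb{Z}_\tr(\mathcal{X}\boxtimes\bcube)=\mathbb{Z}_\tr(\mathcal{X})\otimes\mathbb{Z}_\tr(\bcube)$, the adjunction yields, naturally in $\mathcal{X}$,
\[
\intHom(F,G)(\mathcal{X}\boxtimes\bcube)=\Hom\bigl(\mathbb{Z}_\tr(\mathcal{X})\otimes F\otimes\mathbb{Z}_\tr(\bcube),\,G\bigr)=\intHom\bigl(F,\,\intHom(\mathbb{Z}_\tr(\bcube),G)\bigr)(\mathcal{X}),
\]
hence an isomorphism of presheaves $\intHom(F,G)(-\boxtimes\bcube)\simeq\intHom\bigl(F,\intHom(\mathbb{Z}_\tr(\bcube),G)\bigr)$ under which the structure map $(\id\boxtimes\pi)^*$ becomes $\intHom(F,\psi)$, where $\psi\colon G\to\intHom(\mathbb{Z}_\tr(\bcube),G)$ is induced by $\mathbb{Z}_\tr(\pi)\colon\mathbb{Z}_\tr(\bcube)\to\mathbb{Z}_\tr(S)=\mathbb{Z}$. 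Evaluated at $\mathcal{Y}\in\QSm_S^\prop$, the map $\psi$ is exactly $(\id\boxtimes\pi)^*\colon G(\mathcal{Y})\to G(\mathcal{Y}\boxtimes\bcube)$, an isomorphism because $G$ is cube-invariant; so $\psi$, and therefore $(\id\boxtimes\pi)^*$ on $\intHom(F,G)$, is an isomorphism, which is (1).

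For (2), let $\phi\colon\Lcube(F\otimes G)\twoheadrightarrow\Lcube((\Lcube F)\otimes G)$ denote the canonical epimorphism induced by $F\otimes G\twoheadrightarrow(\Lcube F)\otimes G$. By the Yoneda lemma, $\phi$ is an isomorphism once we know that for every cube-invariant $H\in\PSh(\QCor_S^\prop)$ the map $\phi^*\colon\Hom\bigl(\Lcube((\Lcube F)\otimes G),H\bigr)\to\Hom\bigl(\Lcube(F\otimes G),H\bigr)$ is bijective. By Lemma~\ref{properties_of_lower_h0}(2) and the tensor--hom adjunction, the source identifies with $\Hom\bigl(\Lcube F,\intHom(G,H)\bigr)$, which, since $\intHom(G,H)$ is cube-invariant by part (1), identifies via Lemma~\ref{properties_of_lower_h0}(2) once more with $\Hom\bigl(F,\intHom(G,H)\bigr)$; the target identifies with $\Hom(F\otimes G,H)=\Hom\bigl(F,\intHom(G,H)\bigr)$. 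These identifications being natural, $\phi^*$ becomes the identity of $\Hom\bigl(F,\intHom(G,H)\bigr)$, proving (2).

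The one step deserving genuine care---and, I expect, the only nontrivial one---is checking that the abstract strings of adjunction isomorphisms used above are compatible with the concrete structure maps, i.e.\ that $(\id\boxtimes\pi)^*$ really corresponds to $\intHom(F,\psi)$ in (1), and that $\phi^*$ really becomes the identity in (2). Both come down to a routine diagram chase with the coherence isomorphisms of the closed symmetric monoidal structure and the naturality of the Yoneda and tensor--hom adjunctions; there is no conceptual difficulty, only bookkeeping.
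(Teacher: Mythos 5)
Your proof is correct and takes essentially the same route as the paper: part (2) is the same adjunction calculus, where the paper phrases the universal property of $\Lcube$ via $\Lcube\dashv\Rcube$ and runs Yoneda over all $H$, while you invoke Lemma~\ref{properties_of_lower_h0}(2) and restrict to cube-invariant $H$ — an inconsequential variant. For part (1) the paper reduces to representable $F$ and then uses that $\intHom(-,G)$ sends colimits to limits, whereas you identify $(\id\boxtimes\pi)^*$ with $\intHom(F,\psi)$ for the comparison map $\psi\colon G\to\intHom(\mathbb{Z}_\tr(\bcube),G)$ directly; both are standard unwindings of the closed monoidal structure and amount to the same observation.
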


\begin{proof}
		(1)	For any $\mathcal{X}\in \MCor^\Lambda_k$, we have
				$\intHom(F,G)(\mathcal{X}) =\Hom(\mathbb{Z}_\tr(\mathcal{X})\otimes F, G)$.
				Since $G$ is cube invariant, $\intHom(F,G)$ is cube invariant when $F$ is representable.
				Since $\intHom({-},G)$ turns colimits into limits, the same is true for a general $F$.
				
		(2)	For any $H\in \PSh(\MCor^\Lambda_k)$, we have
				\begin{align*}
					\Hom(\Lcube(F\otimes G),H)
					\simeq{}&\Hom(F,\intHom(G,\Rcube H))&\text{($\Lcube\dashv \Rcube$)}\\
					\simeq{}&\Hom(\Lcube F, \intHom(G,\Rcube H))&\text{(by (1))}\\
					\simeq{}&\Hom(\Lcube((\Lcube F)\otimes G),H).&\text{($\Lcube\dashv \Rcube$)}
				\end{align*}
				Therefore we get the desired result by the Yoneda lemma.\qedhere
\end{proof}

\begin{definition}\label{def:Lh0}
	We define two functors $h_0$ and $h^0$ as follows.
	\begin{enumerate}
	\item	We define $h_0 := \omega_! \Lcube \colon \PSh(\MCor^\Lambda_k)\to \PSh(\Cor_k)$.
	Since $\Lcube$ is lax symmetric monoidal and $\omega_!$ is symmetric monoidal, it follows that $h_0$ is lax symmetric monoidal.
	We write $h_0(\mathcal{X})$ for $h_0\mathbb{Z}_\tr(\mathcal{X})$.
	\item	We define $h^0:=\Rcube\omega^*\colon \PSh(\Cor_k)\to \PSh(\MCor^\Lambda_k)$.
	It follows from the adjunctions $\Lcube\dashv \Rcube$ and $\omega_!\dashv \omega^*$ that $h^0$ is right adjoint to $h_0$.
	\end{enumerate}
\end{definition}

Note that, by applying $\omega_!$ to the canonical epimorphism $\mathbb{Z}_\tr(\mathcal{X})\twoheadrightarrow \Lcube (\mathcal{X})$, we obtain a canonical epimorphism $\mathbb{Z}_\tr(X^\circ)\twoheadrightarrow h_0 (\mathcal{X})$.

\begin{remark}\label{h0_lax_monoidal}
	By Lemma \ref{lem:h0_tensor}, we have $\Lcube(F\otimes G)\simeq \Lcube((\Lcube F)\otimes (\Lcube G))$ for any $F,G\in \PSh(\MCor^\Lambda_k)$.
	The latter is a quotient of $\Lcube F\otimes \Lcube G$, so we get an epimorphism
	$\Lcube F\otimes \Lcube G \twoheadrightarrow \Lcube(F\otimes G)$.
	In particular, $\Lcube$ admits a canonical lax symmetric monoidal structure.
	Applying the exact symmetric monoidal functor $\omega_!$, we also see that there is a canonical epimorphism $h_0 F\otimes h_0 G\twoheadrightarrow h_0(F\otimes G)$.
\end{remark}

\begin{remark}
	In \cite{KSY2}, a functor named $\omega^{\CI}$ is defined by the same formula as $h^0$, but it is regarded as a functor taking values in the category of cube invariant presheaves rather than $\PSh(\MCor^\Lambda_k)$.
\end{remark}

\begin{lemma}\label{Lh0_description}
	Let $\mathcal{X}\in \MCor^\Lambda_k$ and $Y\in \Sm_k$.
	The canonical epimorphism $\mathbb{Z}_\tr(X^\circ)\twoheadrightarrow h_0(\mathcal{X})$ induces an isomorphism
	$h_0(\mathcal{X})(Y) \simeq \Coker (\ulMCor^\Lambda_k(Y\otimes \bcube, \mathcal{X})\xrightarrow{i_0^*-i_1^*} \Cor_k(Y,X^\circ))$.
\end{lemma}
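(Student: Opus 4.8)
The plan is to unwind the definitions $h_0 = \omega_!\Lcube$ and then evaluate using the explicit compactification description of $\omega_!$. Fix a Nagata compactification $\overline{Y}$ of $Y$ together with an effective Cartier divisor $D_{\overline{Y}}$ such that $Y = \overline{Y}\setminus|D_{\overline{Y}}|$, and set $\overline{Y}_n := (\overline{Y}, nD_{\overline{Y}})\in\QSm_S^\prop$. By Lemma \ref{cofinal_in_Comp}, the family $\{\overline{Y}_n\}_{n>0}$ is cofinal in $\Comp(Y)$, so $\comp(Y)=\quotprojlim_{n>0}\overline{Y}_n$; since $\omega_!$ is given by precomposition with $\comp$, together with the definition of $\Lcube$ this gives
\[
h_0(\mathcal{X})(Y) = (\omega_!\Lcube\mathbb{Z}_\tr(\mathcal{X}))(Y) = \varinjlim_{n>0}\bigl(\Lcube\mathbb{Z}_\tr(\mathcal{X})\bigr)(\overline{Y}_n) = \varinjlim_{n>0}\Coker\bigl(\mathbb{Z}_\tr(\mathcal{X})(\overline{Y}_n\boxtimes\bcube)\xrightarrow{d}\mathbb{Z}_\tr(\mathcal{X})(\overline{Y}_n)\bigr),
\]
where $d = (\id\boxtimes i_0)^*-(\id\boxtimes i_1)^*$. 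As filtered colimits are exact, the right-hand side is the cokernel of the colimit map $\varinjlim_n\mathbb{Z}_\tr(\mathcal{X})(\overline{Y}_n\boxtimes\bcube)\to\varinjlim_n\mathbb{Z}_\tr(\mathcal{X})(\overline{Y}_n)$.

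It remains to identify the two colimits with the groups appearing in the statement. For the source: $\comp$ is symmetric monoidal, so $\comp(Y)\boxtimes\bcube = \quotprojlim_n(\overline{Y}_n\boxtimes\bcube)$ in $\Pro(\QCor_S^\prop)$, and by the convention $F(\quotprojlim_\lambda\mathcal{X}_\lambda):=\varinjlim_\lambda F(\mathcal{X}_\lambda)$ we get $\varinjlim_n\mathbb{Z}_\tr(\mathcal{X})(\overline{Y}_n\boxtimes\bcube) = \mathbb{Z}_\tr(\mathcal{X})(Y\boxtimes\bcube)$, which is precisely how the symbol $Y\boxtimes\bcube$ is to be read. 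For the target: $\varinjlim_n\mathbb{Z}_\tr(\mathcal{X})(\overline{Y}_n) = (\omega_!\mathbb{Z}_\tr(\mathcal{X}))(Y)$, and since $\omega_!$ is the extension of $\omega$ by colimits it carries the representable $\mathbb{Z}_\tr(\mathcal{X})$ to the representable $\mathbb{Z}_\tr(\omega\mathcal{X}) = \mathbb{Z}_\tr(X^\circ)$; hence the target is $\mathbb{Z}_\tr(X^\circ)(Y)$. Under these identifications the colimit of the cokernel projections is the homomorphism induced by the canonical epimorphism $\mathbb{Z}_\tr(X^\circ) = \omega_!\mathbb{Z}_\tr(\mathcal{X})\twoheadrightarrow\omega_!\Lcube\mathbb{Z}_\tr(\mathcal{X}) = h_0(\mathcal{X})$, and $d$ becomes the map $d$ of the statement (pullback along the two unit sections of the cube), which yields the desired isomorphism.

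The same computation can be organized more compactly: $\Lcube F$ is the presheaf-level cokernel of $\intHom(\mathbb{Z}_\tr(\bcube),F)\xrightarrow{i_0^*-i_1^*}F$ (using $\intHom(\mathbb{Z}_\tr(\bcube),F)(\mathcal{Y})=F(\mathcal{Y}\boxtimes\bcube)$), so since $\omega_!$ is exact (Proposition \ref{omega_star_and_shriek}) one obtains $h_0(\mathcal{X}) = \Coker\bigl(\omega_!\intHom(\mathbb{Z}_\tr(\bcube),\mathbb{Z}_\tr(\mathcal{X}))\to\mathbb{Z}_\tr(X^\circ)\bigr)$ in $\PSh(\Cor_S)$, and evaluation at $Y$ via the compactification formula recovers the claim. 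I do not expect a genuine obstacle here; the only points requiring care are the pro-object bookkeeping that lets one read $\mathbb{Z}_\tr(\mathcal{X})(Y\boxtimes\bcube)$ as $\varinjlim_n\mathbb{Z}_\tr(\mathcal{X})(\overline{Y}_n\boxtimes\bcube)$, and the fact — built into the identification $\omega_!\mathbb{Z}_\tr(\mathcal{X}) = \mathbb{Z}_\tr(X^\circ)$, and also visible directly from Lemma \ref{lem:ineq_domination} since the restriction of $\pr_2^*D_X$ to the closure of any finite correspondence into $X^\circ$ is supported in the boundary $|\pr_1^*D_{\overline{Y}}|$ — that every finite correspondence $Y\to X^\circ$ becomes an admissible modulus correspondence from $\overline{Y}_n$ to $\mathcal{X}$ once $n\gg 0$, so that the target of $d$ is the full group $c_S(Y,X^\circ)$ rather than the subgroup of modulus correspondences.
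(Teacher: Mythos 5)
Your proposal is correct and takes essentially the same approach as the paper, which disposes of the lemma in one line by citing the definition of $h_0=\omega_!\Lcube$ together with the identification $\mathbb{Z}_\tr(\mathcal{X})(Y)\simeq\mathbb{Z}_\tr(X^\circ)(Y)$. You have merely spelled out what the paper leaves implicit: the compactification formula for $\omega_!$, exactness of filtered colimits, the fact that $\{\overline{Y}_n\boxtimes\bcube\}_n$ is cofinal among compactifications of $Y\boxtimes\bcube$, and (equivalent to Corollary \ref{cor:pre_embedding}) that every finite correspondence $Y\to X^\circ$ becomes admissible from $\overline{Y}_n$ once $n\gg 0$.
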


\begin{proof}
	This follows from the definition of $h_0$ and the isomorphism $\mathbb{Z}_\tr(\mathcal{X})(Y)\simeq \mathbb{Z}_\tr(X^\circ)(Y)$.
\end{proof}

The functors we have defined so far can be summarized as follows.
$$
\xymatrix{
	\PSh(\MCor^\Lambda_k)
		\ar@<1ex>[r]^-{\Lcube}
		\ar@/^23pt/[rr]^-{h_0}
	&
	\PSh(\MCor^\Lambda_k)
		\ar@<1ex>[r]^-{\omega_!}
		\ar@<1ex>[l]^-{\Rcube}
		\ar@{}[l]|-{\text{\rotatebox{-90}{$\dashv$}}}
	&
	\PSh(\Cor_k)
		\ar@<1ex>[l]^-{\omega^*}	
		\ar@{}[l]|-{\text{\rotatebox{-90}{$\dashv$}}}
		\ar@/^23pt/[ll]^-{h^0}
}
$$

\subsection{Reciprocity sheaves}

We define the notion of $\Lambda$-reciprocity sheaf following \cite{KSY2}.

\begin{definition}
Let $X\in \Sm_k$, $F\in \PSh(\Cor_k)$ and $a\in F(X)$.
Take $\mathcal{Y}\in \MCor^\Lambda_k$ with $Y^\circ \simeq X$.
We say that $\mathcal{Y}$ is a {\it $\Lambda$-modulus} for $a$ if $a\colon \mathbb{Z}_\tr(X)\to F$ factors through the canonical epimorphism
$\mathbb{Z}_\tr(X)\twoheadrightarrow h_0(\mathcal{Y})$.
This is equivalent to saying that $a\colon \mathbb{Z}_\tr(\mathcal{Y})\to \omega^*F$ factors through $h^0F\subset \omega^*F$.
\end{definition}

\begin{lemma}\label{reciprocity_equivalent}
	Let $F\in \PSh(\Cor_k)$.
	The following conditions are equivalent:
	\begin{enumerate}
		\item	The counit morphism $h_0h^0F\to F$ is an isomorphism.
		\item	For any $X\in \Sm_k$ and any $a\in F(X)$, there exists $\mathcal{Y}\in \MCor^\Lambda_k$ with $Y^\circ \simeq X$ such that
				$a\in h^0F(\mathcal{Y})$.
		\item	Every section of $F$ admits a $\Lambda$-modulus.
	\end{enumerate}
\end{lemma}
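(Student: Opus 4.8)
The plan is to recognize all three conditions as reformulations of the surjectivity of the counit morphism $h_0h^0F\to F$. First I would observe that this counit is always a monomorphism: since $h^0F=\Rcube\omega^*F$ is cube-invariant (Lemma \ref{properties_of_upper_h0}(1)), the canonical epimorphism $h^0F\twoheadrightarrow\Lcube h^0F$ is an isomorphism, so $h_0h^0F=\omega_!\Lcube\Rcube\omega^*F\simeq\omega_!\Rcube\omega^*F$, and under this identification the counit is $\omega_!$ applied to the inclusion $\Rcube\omega^*F\hookrightarrow\omega^*F$ followed by the isomorphism $\omega_!\omega^*F\xrightarrow{\sim}F$ (Proposition \ref{omega_star_and_shriek}). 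Since $\omega_!$ is exact it preserves monomorphisms, so the counit is a monomorphism; hence condition $(1)$ is equivalent to saying that $h_0h^0F\to F$ is surjective on sections over every $X\in\Sm_S$.

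Next I would make this surjectivity explicit via the colimit description of $\omega_!$. Fix $X\in\Sm_S$. For any $\mathcal{Y}=(Y,D_Y)\in\QSm_S^\prop$ with an isomorphism $Y^\circ\simeq X$ over $S$, the triple $(Y,\emptyset,D_Y)$ belongs to $\Comp(X)$, and by Lemma \ref{cofinal_in_Comp} the subfamily $\{(Y,\emptyset,nD_Y)\}_{n>0}$ is cofinal in $\Comp(X)$; therefore $h_0h^0F(X)=\omega_!h^0F(X)=\varinjlim_{n>0}h^0F(Y,nD_Y)$, the transition maps being the restrictions along the evident ambient morphisms $(Y,n'D_Y)\to(Y,nD_Y)$ for $n'\ge n$, and the map to $F(X)$ being induced by the inclusions $h^0F(Y,nD_Y)\hookrightarrow\omega^*F(Y,nD_Y)=F(X)$. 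Granting $(1)$: for $a\in F(X)$, choose by Nagata's theorem some $\mathcal{Y}_0\in\QSm_S^\prop$ with $Y_0^\circ\simeq X$; surjectivity of the counit gives $a\in h^0F(Y_0,nD_{Y_0})$ for some $n$, and $(Y_0,nD_{Y_0})$ then witnesses $(2)$. Conversely, granting $(2)$: for $a\in F(X)$ there is $\mathcal{Y}=(Y,D_Y)$ as above with $a\in h^0F(\mathcal{Y})$; since $\mathcal{Y}$ is the $n=1$ term of the cofinal system $\{(Y,\emptyset,nD_Y)\}_{n>0}$ for $\Comp(X)$, the class of $a$ in $\varinjlim_n h^0F(Y,nD_Y)=h_0h^0F(X)$ maps to $a\in F(X)$, so the counit is surjective at every $X$, hence (being a monomorphism) an isomorphism, i.e. $(1)$ holds.

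Finally, $(2)\Leftrightarrow(3)$ is a direct comparison of definitions. By the definition of a modulus, for $X\in\Sm_S$, $a\in F(X)$, and $\mathcal{Y}\in\QSm_S^\prop$ with $Y^\circ\simeq X$, the pair $\mathcal{Y}$ is a modulus for $a$ exactly when the morphism $a\colon\mathbb{Z}_\tr(\mathcal{Y})\to\omega^*F$ classifying $a\in\omega^*F(\mathcal{Y})=F(X)$ factors through $h^0F$. On the other hand $h^0F(\mathcal{Y})=\Rcube\omega^*F(\mathcal{Y})=\Hom(\Lcube(\mathcal{Y}),\omega^*F)$, and the inclusion $h^0F(\mathcal{Y})\hookrightarrow\omega^*F(\mathcal{Y})$ is precomposition with the canonical epimorphism $\mathbb{Z}_\tr(\mathcal{Y})\twoheadrightarrow\Lcube(\mathcal{Y})$; because $\Lcube(\mathcal{Y})$ is cube-invariant, a morphism $\Lcube(\mathcal{Y})\to\omega^*F$ factors uniquely through $\Rcube\omega^*F=h^0F$ by Lemma \ref{properties_of_upper_h0}(2). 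Hence ``$a\in h^0F(\mathcal{Y})$'' and ``$\mathcal{Y}$ is a modulus for $a$'' are the same statement, so $(2)$ and $(3)$ coincide. The only genuinely laborious point I anticipate is the bookkeeping in the middle paragraph: checking that a proper $\mathbb{Q}$-modulus pair with interior $X$ indexes a cofinal subsystem of $\Comp(X)$ and that the induced maps to $F(X)$ really are the evident inclusions, so that surjectivity of the counit transcribes verbatim into condition $(2)$.
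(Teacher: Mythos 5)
Your proof is correct and follows essentially the same route as the paper's (terse) argument: both rest on the identification $h_0 h^0 F \simeq \omega_! h^0 F$, the explicit colimit formula $\omega_! G(X)=\varinjlim_n G(Y,nD_Y)$ over a cofinal system of compactifications, and unwinding the definition of modulus. You simply spell out the details — including the monomorphism property of the counit, which the paper uses implicitly here and proves en route to Lemma \ref{omega_shriek_CI_is_RSC} — that the paper dismisses as ``clear from the definition.''
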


\begin{proof}
	The equivalence of (1) and (2) follows from the formula
	$$
		h_0h^0F (X)=\omega_!h^0F(X)=\varinjlim_{\mathcal{Y}\in \MCor^\Lambda_k,Y^\circ\simeq X} h^0F(\mathcal{Y}) \subset F(X).
	$$
	The equivalence of (2) and (3) is clear from the definition.
\end{proof}

\begin{definition}
We say that $F$ has {\it $\Lambda$-reciprocity} or $F$ is a \emph{$\Lambda$-reciprocity presheaf} if it satisfies the equivalent conditions in Lemma \ref{reciprocity_equivalent}.
If $F$ is moreover a Nisnevich sheaf, then we say that $F$ is a \emph{reciprocity sheaf}.
We define $\RSC^\Lambda_k$ (resp. $\RSC^\Lambda_{k,\Nis}$) to be the full subcategory of $\PSh(\Cor_k)$ spanned by $\Lambda$-reciprocity presheaves (resp. $\Lambda$-reciprocity sheaves).
\end{definition}

The category $\RSC^\Lambda_k$ is closed under taking subobjects and quotients in $\PSh(\Cor_k)$.
In particular, $\RSC^\Lambda_k$ is an abelian category and the inclusion functor $\RSC^\Lambda_k\to \PSh(\Cor_k)$ is exact.

\begin{lemma}
	We have $\RSC^\mathbb{Z}_k=\RSC^\mathbb{Q}_k$.
\end{lemma}

\begin{proof}
	Let $X\in \Sm_k$, $F\in \PSh(\Cor_k)$ and $a\in F(X)$.
	Then it is clear that every $\mathbb{Z}$-modulus $\mathcal{Y}$ for $a$ is also a $\mathbb{Q}$-modulus for $a$, since the description of $h_0(\mathcal{Y})$ (Lemma \ref{Lh0_description}) is the same for $\Lambda = \mathbb{Z},\mathbb{Q}$.
	Conversely, if $\mathcal{Y}$ is a $\mathbb{Q}$-modulus for $a$, then so is $(Y,nD_Y)$ for $n\in \mathbb{Z}_{>0}$, so there exists a $\mathbb{Z}$-modulus for $a$.
\end{proof}

We thus write $\RSC_k$ for $\RSC^\mathbb{Z}_k=\RSC^\mathbb{Q}_k$ and call its objects \emph{reciprocity presheaves} over $k$.

\begin{lemma}\label{omega_shriek_CI_is_RSC}
	If $F\in \PSh(\MCor^\Lambda_k)$ is cube invariant, then $\omega_!F\in \RSC_k$.
\end{lemma}

\begin{proof}
	The composition $h_0F\xrightarrow{h_0(\text{unit})} h_0h^0h_0F\xrightarrow{\text{counit}} h_0F$ is the identity.
	Since the counit morphism $h_0h^0G\to G$ is a monomorphism for any $G\in \PSh(\Cor_S)$, it follows that the counit morphism $h_0h^0h_0F\to h_0F$ is an isomorphism.
	Therefore $\omega_!F = h_0F$ has reciprocity.
\end{proof}

\begin{remark}
	Let $F\in \RSC_k$ and $\mathcal{X}\in \MCor^\Lambda_k$.
	The group $h^0 F(\mathcal{X})$ can be thought of as the subgroup of $\omega^*F(\mathcal{X})=F(X^\circ)$ consisting of elements whose ``ramification'' is bounded by $D_X$.
	This actually recovers several classical notions in ramification theory such as the Artin conductor or the irregularity \cite{RS21}.
	For further developments of the ramification theory of reciprocity sheaves, see \cite{RS21}, \cite{RSram1}, \cite{RSram2}, and \cite{RSram3}.
\end{remark}

\section{Modulus curves and motivic Hasse-Arf theorem}\label{sec:HA}

Fix $\Lambda\in \{\mathbb{Z},\mathbb{Q}\}$.
In this section, we describe $h_0(\mathcal{X})$ for a $\Lambda$-modulus curve (Definition \ref{def:modulus_curve}) $\mathcal{X}$ over $k$, using the Chow group of relative $0$-cycles (Definition \ref{def:CH0}).
As a corollary, we prove a motivic analogue of the Hasse-Arf theorem.

\subsection{Admissible rational functions}

\begin{definition}
	A $\Lambda$-modulus pair $\mathcal{X}\in \ulMCor^\Lambda_k$ is called \emph{normal integral} if $X$ is a normal integral scheme.
\end{definition}

\begin{definition}\label{def:admissible_functions}
	Let $\mathcal{X}\in \ulMCor^\Lambda_k$ be a normal integral $\Lambda$-modulus pair.
	We say that $f\in k(X)^\times$ is \emph{admissible} with respect to $D_X$ if there is some open neighborhood $U$ of $|D_X|$ such that 
	\begin{enumerate}
		\item	$f$ is regular and invertible on $U$, and
		\item	$\div(f-1)\geq D_X$ holds on $U$ (including the case $f=1$).
	\end{enumerate}
	We define $\Adm(\mathcal{X})\subset k(X)^\times$ to be the subset consisting of rational functions which are admissible with respect to $D_X$.
	It follows from the identity $fg-1=(f-1)g+(g-1)$ that $\Adm(\mathcal{X})$ is a subgroup of $k(X)^\times$.
\end{definition}

\begin{lemma}\label{lem:G_valuative}
	Let $\mathcal{X}\in \ulMCor^\Lambda_k$ be a normal integral $\Lambda$-modulus pair.
	Then $f\in k(X)^\times$ is admissible with respect to $D_X$ if and only if for any discrete valuation ring $R$ with fraction field $k(X)$ and any morphism $\rho\colon \Spec R\to X$ extending $\Spec k(X)\to X$ such that $\rho^*D_X\neq\emptyset$, the inequality $v_R(f-1)\geq v_R(\rho^*D_X)$ holds in $\Lambda$.
\end{lemma}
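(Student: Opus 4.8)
The plan is to reduce to the case where $X$ is normal and then read both conditions as valuative conditions, so that the only genuine content is recovering regularity and invertibility of $f$ near $|D_X|$ from a bound on $f-1$ alone. First I would dispose of $f=1$: then admissibility holds by fiat and $v(f-1)=v(0)$ dominates everything, so assume $f\neq 1$. Next I would reduce to $X$ normal integral. Writing $\pi\colon X^N\to X$ for the normalization — integral, surjective, pseudo-dominant, an isomorphism on function fields (Lemma \ref{lem:normalization}) — any $\rho$ as in the statement is pseudo-dominant with $\Spec R$ normal, hence factors uniquely through $\pi$ by Lemma \ref{lem:normalization}(\ref{normalization_universal_property}), and $\rho^*D_X=(\rho^N)^*\pi^*D_X$; so the valuative condition for $(X,D_X)$ is literally that for $(X^N,\pi^*D_X)$. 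On the admissibility side, since $\succ$ and $|{\cdot}|$ are defined through $\pi$ and $\pi$ is closed, a witnessing open $U\ni|D_X|$ for $(X,D_X)$ corresponds to an open $V=\pi^{-1}(U)\supseteq|\pi^*D_X|$ of $X^N$ with $f\in\mathcal{O}_V^\times$ and $\cdiv(f-1)\ge\pi^*D_X$, and conversely any such $V$ can be shrunk to one of the form $\pi^{-1}(U)$ by replacing it with $\pi^{-1}(X\setminus\pi(X^N\setminus V))$. So from now on $X$ is normal integral and $\succ$ is just $\le$. I would also record the elementary fact that $\rho^*D_X$ is non-trivial exactly when the image of the closed point of $\Spec R$ lies in $|D_X|$ (if it avoids $|D_X|$ the map factors through $X^\circ$; if it lies in $|D_X|$, write $D_X$ locally as $\tfrac1n\cdiv(d)$ with $d$ in the maximal ideal there, so $v(\rho^*D_X)=\tfrac1n v(d)>0$).

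For the "only if" direction: given a witnessing $V$ and $\rho$ with $\rho^*D_X$ non-trivial, the closed point of $\Spec R$ lands in $|D_X|\subseteq V$ and the generic point in the generic point of $X$, so $\rho$ factors through $V$; then $f-1\in\mathcal{O}_V$ with $\cdiv(f-1)\ge D_X$ on $V$, and the valuative criterion for inequality of $\mathbb{Q}$-Cartier divisors on normal schemes (Lemma \ref{lem:valuative} together with the remark that it holds for $\mathbb{Q}$-Cartier divisors) gives $v(f-1)\ge v(\rho^*D_X)$.

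For the "if" direction, the heart of the matter is to show $f\in\mathcal{O}_{X,x}^\times$ for every $x\in|D_X|$. If $f\notin\mathcal{O}_{X,x}$, I would argue as in the proof of Lemma \ref{lem:valuative}: the ideal $\mathfrak{m}_x A[f^{-1}]+f^{-1}A[f^{-1}]$ with $A=\mathcal{O}_{X,x}$ is proper (else $f$ is integral over the normal ring $A$, so $f\in A$), hence contained in a maximal ideal $\mathfrak{P}$ with $\mathfrak{P}\cap A=\mathfrak{m}_x$; any valuation ring $R$ of $k(X)$ dominating $A[f^{-1}]_{\mathfrak{P}}$ then dominates $A$ and has $v(f)<0$, so $v(f-1)=v(f)<0<v(\rho^*D_X)$, contradicting the hypothesis. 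If instead $f\in\mathfrak{m}_x$, any valuation ring $R$ dominating $A$ has $v(f)>0$ and hence $v(f-1)=0<v(\rho^*D_X)$, again a contradiction. Thus $V:=\{x\in X\mid f\in\mathcal{O}_{X,x}^\times\}$ is an open neighbourhood of $|D_X|$ with $f\in\mathcal{O}_V^\times$. It then remains to verify $\cdiv(f-1)\ge D_X$ on $V$, for which (by Lemma \ref{lem:valuative}) it suffices that $v(f-1)\ge v(\rho^*D_X)$ for all $\rho\colon\Spec R\to V$ extending the generic point; this is automatic when $\rho^*D_X=0$ (since $f-1\in\mathcal{O}_V$ forces $v(f-1)\ge 0$) and follows from the hypothesis, after composing with $V\hookrightarrow X$, when $\rho^*D_X$ is non-trivial. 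Hence $f$ is admissible, witnessed by $V$.

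The main obstacle is this last claim in the "if" direction: the hypothesis only controls $v(f-1)$, so one genuinely has to work to conclude that $f$ itself is regular and invertible near $|D_X|$, and the pole case needs the construction of a suitable dominating valuation ring, reprising the argument in the proof of Lemma \ref{lem:valuative}. Everything else is bookkeeping with the definitions and the valuative criteria already in place.
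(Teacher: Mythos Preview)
Your argument is correct and follows essentially the same route as the paper. The paper's proof works directly on $X^N$ rather than first reducing to the normal case, and compresses your two-case contradiction for $f\in\mathcal{O}_{X,x}^\times$ into the single observation that $v(f-1)\ge v(\rho^*D_X)>0$ forces $f\in 1+\mathfrak{m}_R\subset R^\times$ for every valuation ring dominating $\mathcal{O}_{X^N,y}$ (whence $f\in\mathcal{O}_{X^N,y}^\times$ by normality); the shrinking of the resulting open $V\subset X^N$ to one of the form $\pi^{-1}(U)$ and the final verification via the valuative criterion are identical to yours.
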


\begin{proof}
	The ``only if'' part is easy.
	Let us prove the ``if'' part.
	Let $x\in |D_X|$.
	Our assumption implies that for any discrete valuation ring $R$ with fraction field $k(X)$ dominating $\mathcal{O}_{X,x}$, we have $f\in R^\times$.
	Since $\mathcal{O}_{X,x}$ is a noetherian normal domain, this shows that $f\in \mathcal{O}_{X,x}^\times$.
	Therefore $f$ is invertible on some open neighborhood $U$ of $|D_X|$.
	The condition (1) in Definition \ref{def:admissible_functions} is satisfied for this $U$.
	Let us verify the condition (2).
	Let $R$ be a valuation ring with fraction field $k(X)$ and $\rho\colon \Spec R\to U$ be a morphism extending $\Spec k(X)\to U$.
	If $\rho^*D_X$ is trivial, then we have $\rho^*\div(f-1)\geq 0=\rho^*D_X$, since $f-1$ is regular on $U$.
	Otherwise, our assumption implies that $\rho^*\div(f-1)\geq \rho^*D_X$.
	Since $U$ is normal, we get $\div(f-1)\geq D_X$ on $U$.
\end{proof}

\begin{lemma}\label{higher_unit_norm}
	Let $(K,v)$ be a discrete valuation field and $L/K$ be a finite extension.
	Let $\gamma\in \mathbb{Q}_{>0}$, $f\in L^\times$ and suppose that for every discrete valuation $w\colon L^\times\to \mathbb{Q}$ extending $v$, we have
	$w(f-1)\geq \gamma$.
	Then we have
	$v(\Nm_{L/K}(f) - 1) \geq \gamma$.
\end{lemma}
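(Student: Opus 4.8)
The plan is to pass to an algebraic closure of $K$ and reduce the assertion to the elementary fact that, when $\gamma>0$, the elements congruent to $1$ modulo the ideal of valuation $\geq\gamma$ form a subgroup of the multiplicative group.

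First I would fix an algebraic closure $\overline K$ of $K$ with $L\subset\overline K$ and extend $v$ to a valuation $\overline v$ on $\overline K$ (such an extension exists by the general extension theorem for valuations, see \cite[Chapter VI]{Bourbaki-Commutative-Algebra}). Since $\overline K$ is algebraically closed its value group is divisible, and being torsion over $\Gamma=v(K^\times)$ it coincides with the divisible hull $\Gamma\otimes_{\mathbb{Z}}\mathbb{Q}$; in particular $\gamma$ makes sense as an element of it, and $\overline v|_K=v$. Let $\sigma_1,\dots,\sigma_r\colon L\hookrightarrow\overline K$ be the distinct $K$-embeddings and let $q=[L:K]_{\mathrm{insep}}$, so that $\Nm_{L/K}(f)=\bigl(\prod_{i=1}^{r}\sigma_i(f)\bigr)^{q}$. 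For each $i$ the composite $\overline v\circ\sigma_i$ is a valuation on $L$ extending $v$ (it fixes the restriction to $K$, and its value group lies in $\Gamma\otimes_{\mathbb{Z}}\mathbb{Q}$ as $\sigma_i(L)/K$ is finite), so the hypothesis of the lemma applies to it and gives
\[
\overline v\bigl(\sigma_i(f)-1\bigr)=\overline v\bigl(\sigma_i(f-1)\bigr)=(\overline v\circ\sigma_i)(f-1)\ \geq\ \gamma\qquad(i=1,\dots,r).
\]

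Next I would invoke the purely formal observation that, writing $\mathfrak{m}=\{x\in\overline K:\overline v(x)\geq\gamma\}$, the subset $1+\mathfrak{m}$ is a subgroup of $\overline K^{\times}$: it is stable under multiplication since $\overline v\bigl((1+x)(1+y)-1\bigr)=\overline v(x+y+xy)\geq\min(\gamma,\gamma,2\gamma)=\gamma$ because $\gamma>0$, and stable under inversion since for $x\in\mathfrak{m}$ one has $\overline v(1+x)=0$, whence $(1+x)^{-1}-1=-x/(1+x)$ has valuation $\overline v(x)\geq\gamma$. In particular $1+\mathfrak{m}$ is closed under products and $q$-th powers. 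By the displayed inequalities each $\sigma_i(f)$ lies in $1+\mathfrak{m}$, hence so does $\Nm_{L/K}(f)=\bigl(\prod_{i}\sigma_i(f)\bigr)^{q}$, i.e.\ $\overline v\bigl(\Nm_{L/K}(f)-1\bigr)\geq\gamma$. Since $\Nm_{L/K}(f)\in K$ and $\overline v|_K=v$, this is precisely $v\bigl(\Nm_{L/K}(f)-1\bigr)\geq\gamma$, which yields the claim.

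The computation is immediate; the only points needing a moment of care — rather than any genuine obstacle — are bookkeeping ones. One must read the hypothesis $w(f-1)\geq\gamma$ correctly when $w$ has value group strictly larger than $\Gamma$, which is harmless because $L/K$ is finite, so all the value groups in sight embed into the common divisible hull $\Gamma\otimes_{\mathbb{Z}}\mathbb{Q}$ in which $\gamma$ lives. One must also keep track of the inseparable factor $q=[L:K]_{\mathrm{insep}}$ in the norm formula, which is exactly why it is convenient to work with $1+\mathfrak{m}$ as a genuine subgroup (closed under all integer powers) rather than a merely multiplicatively closed set. Alternatively, one could run the same estimate through the characteristic polynomial of $f-1$ over $K$: its non-leading coefficients are, up to sign, the elementary symmetric functions of the conjugates $\sigma_i(f-1)\in\mathfrak{m}$, hence all have valuation $\geq\gamma$, and $\Nm_{L/K}(f)-1$ is a signed integer combination of them.
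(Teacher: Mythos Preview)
Your proof is correct and follows essentially the same approach as the paper: both pass to an algebraic closure, extend $v$, write $\Nm_{L/K}(f)=\prod_\sigma\sigma(f)^i$ with $i$ the inseparable degree, observe that each $\overline v\circ\sigma$ is an extension of $v$ to $L$ so that $\overline v(\sigma(f)-1)\geq\gamma$, and conclude from the product expansion. You simply spell out more of the bookkeeping (the subgroup property of $1+\mathfrak m$, the identification of the value group of $\overline K$ with $\Gamma\otimes_{\mathbb Z}\mathbb Q$) than the paper does.
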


\begin{proof}
	Let $i$ denote the inseparable degree of $L/K$.
	Fix an algebraic closure $\overline{K}$ of $K$.
	Then $v$ can be extended to a valuation on $\overline{K}$.
	Let us fix such an extension $\widetilde{v}$.
	We have
	$$
		\Nm_{L/K}(f)=\textstyle \prod_{\sigma} \sigma(f)^i =\prod_{\sigma} (1+\sigma(f-1))^i
	$$
	where $\sigma$ runs over the set of distinct $K$-embeddings of $L$ into $\overline{K}$.
	Now $\widetilde{v}\circ \sigma$ gives a valuation on $L$ extending $v$ for each $\sigma$, so we have
	$\widetilde{v}(\sigma(f-1))\geq \gamma$ by our assumption.
	By the above expression for $\Nm_{L/K}(f)$, we get the desired inequality.
\end{proof}

\begin{lemma}\label{principal_divisor_push}
	Let $\mathcal{X}, \mathcal{C} \in \MCor^\Lambda_k$ be normal integral $\Lambda$-modulus pairs and $q\colon \mathcal{X}\to \mathcal{C}$ be an ambient morphism such that $X \to C$ is proper and generically finite.
	Then for any $f\in \Adm(\mathcal{X})$, we have $\Nm_{k(X)/k(C)}(f)\in \Adm(\mathcal{C})$.
\end{lemma}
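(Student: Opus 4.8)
The plan is to apply the valuative criterion of Lemma \ref{lem:G_valuative} to the $\mathbb{Q}$-modulus pair $\mathcal{C}$ and the rational function $g := \Nm_{k(X)/k(C)}(f)\in k(C)^\times$ (which is indeed a nonzero rational function, being the norm of a unit along a finite field extension). So fix a valuation ring $R$ with fraction field $k(C)$, with valuation $v\colon k(C)^\times\to\Gamma$, and a morphism $\rho\colon\Spec R\to C$ extending the generic point $\Spec k(C)\to C$ such that $\rho^*D_C$ is non-trivial; we must show $v(g-1)\geq v(\rho^*D_C)$. Put $\gamma := v(\rho^*D_C)$, which is a well-defined element of $(\Gamma\otimes_{\mathbb{Z}}\mathbb{Q})_{>0}$ since $\CDiv_{\mathbb{Q}}(\Spec R)\simeq\Gamma\otimes_{\mathbb{Z}}\mathbb{Q}$ and $\rho^*D_C$ is a non-trivial $\mathbb{Q}$-effective $\mathbb{Q}$-Cartier divisor.

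The core of the argument is to transport the admissibility of $f$ along every valuation of $k(X)$ lying over $v$. Let $w$ be an arbitrary valuation on $k(X)$ extending $v$, and let $R_w$ be its valuation ring, so that $R_w$ has fraction field $k(X)$ and dominates $R$; write $\iota\colon\Spec R_w\to\Spec R$ for the resulting morphism and $\Gamma_w\supseteq\Gamma$ for the value group of $w$. Since $q$ is proper, the valuative criterion for properness produces a unique $\rho_w\colon\Spec R_w\to X$ extending the generic point $\Spec k(X)\to X$ and making the square with bottom edge $\rho\circ\iota\colon\Spec R_w\to C$ and right edge $q$ commute; in particular $q\circ\rho_w=\rho\circ\iota$. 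As $\Spec R_w$ is a normal integral scheme and $\rho_w$ is pseudo-dominant, $\rho_w$ factors through the normalization $X^N$ by Lemma \ref{lem:normalization} (\ref{normalization_universal_property}). Pulling back the weak inequality $D_X\succ q^*D_C$ — valid because $q$ is an ambient morphism — along this factorization, and using functoriality of pullbacks of $\mathbb{Q}$-Cartier divisors, we obtain $\rho_w^*D_X\geq\rho_w^*q^*D_C=\iota^*\rho^*D_C$ in $\CDiv_{\mathbb{Q}}(\Spec R_w)$. In particular $\rho_w^*D_X$ is non-trivial, since $\iota^*\rho^*D_C$ is the image of $\gamma>0$ under the injective order-preserving map $\Gamma\otimes_{\mathbb{Z}}\mathbb{Q}\to\Gamma_w\otimes_{\mathbb{Z}}\mathbb{Q}$. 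Feeding the pair $(R_w,\rho_w)$ into the valuative criterion of Lemma \ref{lem:G_valuative} for $f\in\Adm(\mathcal{X})$ then gives $w(f-1)\geq w(\rho_w^*D_X)\geq w(\iota^*\rho^*D_C)=\gamma$ in $\Gamma_w\otimes_{\mathbb{Z}}\mathbb{Q}$.

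Since $w$ was arbitrary among valuations of $k(X)$ extending $v$, Lemma \ref{higher_unit_norm} (applied with $K=k(C)$, $L=k(X)$, and $\gamma$ as above) yields $v(g-1)=v(\Nm_{k(X)/k(C)}(f)-1)\geq\gamma=v(\rho^*D_C)$, which is exactly the inequality demanded by Lemma \ref{lem:G_valuative}. Hence $\Nm_{k(X)/k(C)}(f)\in\Adm(\mathcal{C})$, as claimed.

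I expect the main obstacle to be purely a matter of bookkeeping: one must check that all the Cartier-divisor pullbacks occurring ($\rho^*D_C$, $q^*D_C$, $\rho_w^*q^*D_C=\iota^*\rho^*D_C$, and the pullback of $D_X$ along $\rho_w$ through $X^N$) exist and compose correctly — invoking Proposition \ref{lem:pb-criteria} and Lemma \ref{lem:pb-criteria-3} where needed — and that the identity $w(\iota^*\rho^*D_C)=\gamma$ is read off correctly under the canonical inclusion of value groups $\Gamma\hookrightarrow\Gamma_w$, so that the hypotheses of Lemma \ref{higher_unit_norm} are literally met. The genuinely geometric inputs, namely lifting $\rho$ across the proper morphism $q$ and factoring through the normalization, are immediate from the results already established.
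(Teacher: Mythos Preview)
Your proof is correct and follows essentially the same approach as the paper's: apply the valuative criterion of Lemma~\ref{lem:G_valuative} on $\mathcal{C}$, lift an arbitrary extension $w$ of $v$ across the proper morphism $q$ via the valuative criterion, use the ambient inequality $D_X\succ q^*D_C$ together with Lemma~\ref{lem:G_valuative} for $f$ to bound $w(f-1)$, and conclude with Lemma~\ref{higher_unit_norm}. Your version is simply more explicit about the bookkeeping (factoring through $X^N$, checking non-triviality of $\rho_w^*D_X$) that the paper leaves implicit.
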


\begin{proof}
	By Lemma \ref{lem:G_valuative}, it suffices to show that for any discrete valuation ring $R$ with fraction field $k(C)$ and any morphism $\rho\colon \Spec R\to C$ extending $\Spec k(C)\to C$ such that $\rho^*D_C\neq\emptyset$, we have $v_R(\Nm_{k(X)/k(C)}(f)-1)\geq v_R(\rho^*D_C)$.
	Let $w\colon k(X)^\times \to \mathbb{Q}$ be an arbitrary extension of $v_R$ and $R'$ be its valuation ring.
	Since $q\colon X\to C$ is proper, there is a unique dashed arrow $\varphi$ in the following diagram which makes it commute:
	$$
		\xymatrix{
			\Spec k(X)\ar[r]\ar[d]		&\Spec R'\ar@{-->}[r]^-{\varphi}\ar[d]		&X\ar[d]^-q\\
			\Spec k(C)\ar[r]			&\Spec R\ar[r]^-{\rho}						&C.
		}
	$$
	By Lemma \ref{lem:G_valuative}, we have $w(f-1)\geq w(\varphi^*D_X)\geq w(\varphi^*q^*D_C)=v_R(\rho^*D_C)$.
	By Lemma \ref{higher_unit_norm}, we get $v_R(\Nm_{k(X)/k(C)}(f)-1)\geq v_R(\rho^*D_C)$, as was to be shown.
\end{proof}

\subsection{Modulus curves}

In this subsection, we fix $S\in \Sm_k$ which is connected.

\begin{definition}\label{def:modulus_curve}
	A {\it $\Lambda$-modulus curve} over $S$ is a modulus pair $\mathcal{C}\in \ulMCor^\Lambda_k$ equipped with a proper smooth morphism $C\to S$ of relative dimension $1$ such that $C^\circ \to S$ is quasi-affine.
\end{definition}

\begin{lemma}\label{modulus_curve_every_fiber}
	Let $\mathcal{C}$ be a $\Lambda$-modulus curve over $S$.
	If $V$ is a closed subscheme of $C$ contained in $C^\circ$, then $V$ is finite over $S$.
\end{lemma}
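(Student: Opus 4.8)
The plan is to show that the structure morphism $V\to S$ is simultaneously proper and quasi-affine, and then to invoke the standard fact that a morphism which is both proper and quasi-affine is finite.

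For properness, I would note that $C$ is proper over $S$ because $\mathcal{C}\in\QSm_S^\prop$, so the closed subscheme $V\subset C$ is proper over $S$ as well. For quasi-affineness, I would use that the underlying space of $V$ is contained in the open subscheme $C^\circ=C\setminus|D_C|$: the closed immersion $V\to C$ then factors through the open immersion $C^\circ\to C$, and the induced morphism $V\to C^\circ$ is an immersion whose image $|V|$ is closed in $C$, hence closed in $C^\circ$. Since an immersion with closed image is a closed immersion, $V\to C^\circ$ is a closed immersion, in particular (quasi-)affine. As $\mathcal{C}$ is a modulus curve, $C^\circ\to S$ is quasi-affine by definition, and quasi-affine morphisms are stable under composition (closed immersions being affine); hence $V\to S$ is quasi-affine.

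It then remains to recall that a morphism which is at once proper and quasi-affine is finite: a quasi-affine morphism factors as a quasi-compact open immersion into an affine $S$-scheme followed by an affine (hence separated) morphism, and properness together with the cancellation property forces that open immersion to be a clopen immersion, so $V$ is an affine $S$-scheme; a proper affine morphism is finite (cf.\ \cite[\href{https://stacks.math.columbia.edu/tag/01WN}{Tag 01WN}]{stacks-project}). I do not anticipate any real obstacle; the only point requiring a moment's attention is the identification of $V\to C^\circ$ as a closed immersion, via the elementary fact that an immersion with closed image is closed. Note also that the hypothesis that $C^\circ\to S$ has purely $1$-dimensional fibres plays no role in this statement — only the properness of $C/S$ and the quasi-affineness of $C^\circ/S$ enter.
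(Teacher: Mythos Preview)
Your proof is correct and takes a slightly different route from the paper. The paper also begins with properness of $V\to S$, but then argues fiberwise: for each $x\in S$ the fiber $V_x$ is a closed subscheme of the quasi-affine $k(x)$-scheme $C^\circ_x$ and is proper over $k(x)$, hence finite; this gives that $V\to S$ is quasi-finite, and proper plus quasi-finite implies finite. You instead observe that $V\to S$ is globally quasi-affine (being a closed immersion into $C^\circ$ followed by the quasi-affine $C^\circ\to S$) and invoke the fact that proper plus quasi-affine implies finite. Your route is marginally more direct, avoiding the passage through fibers and the appeal to ``proper $+$ quasi-finite $\Rightarrow$ finite'' (which ultimately rests on Zariski's Main Theorem), at the cost of spelling out why proper quasi-affine morphisms are affine. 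Your remark that the $1$-dimensionality of the fibers is irrelevant here is also correct, and applies equally to the paper's argument. One small simplification: rather than arguing via ``an immersion with closed image is closed'', you can note that $V\to C^\circ$ is simply the base change of the closed immersion $V\to C$ along the open immersion $C^\circ\hookrightarrow C$, hence is a closed immersion.
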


\begin{proof}
	Since $C\to S$ is proper, $V\to S$ is also proper.
	For any $x\in S$, the fiber $V_x$ of $V$ is a closed subscheme of $C^\circ_x$ which is proper over $\Spec k(x)$.
	By our assumption that $C^\circ_x$ is quasi-affine, there is an open immersion $C^\circ_x\hookrightarrow B$ into an affine scheme $B$.
	The properness of $V_x$ implies that the image of $V_x$ in $B$ is closed.
	It follows that $V_x$ is finite over $k(x)$ and hence $V\to S$ is quasi-finite.
	By Zariski's main theorem, we conclude that $V\to S$ is finite.
\end{proof}

\begin{lemma}\label{lem:modulus_curve_Weil}
	Let $\mathcal{C}$ be a $\Lambda$-modulus curve over $S$.
	For an integral closed subscheme $V\subset C^\circ$, the following conditions are equivalent:
	\begin{enumerate}
	\item	$V\to S$ is finite surjective.
	\item	$V$ has codimension $1$ and is closed in $C$.
	\end{enumerate}
\end{lemma}

\begin{proof}
	(1)$\implies$(2): Suppose that $V\to S$ is finite surjective.
	Then $V$ is finite over $C$ and hence is closed in $C$.
	Let $\xi$ be the generic point of $V$ and $\eta$ its image in $S$.
	Then the fiber dimension theorem for flat morphisms \cite[Corollary 14.95]{Goertz-Wedhorn} implies
	$\codim_{C^\circ} (\xi) = \codim_S (\eta) + \codim_{C^\circ_\eta} (\xi)$.
	Since $V\to S$ is finite surjective, we have $\codim_S (\eta) = 0$ and $\codim_{C^\circ_\eta} (\xi)=1$.
	This implies $\codim_{C^\circ} (\xi)=1$.
	
	(2)$\implies$(1): Suppose that $V$ has codimension $1$ and is closed in $C$.
	Then $V$ is finite over $S$ by Lemma \ref{modulus_curve_every_fiber}.
	We have $\codim_{C^\circ} (\xi) = 1$ and $\codim_{C^\circ_\eta} (\xi)=1$, so the fiber dimension theorem implies $\codim_S(\eta) = 0$.
	Therefore $V\to S$ is surjective.
\end{proof}

\begin{definition}\label{def:CH0}
	Let $\mathcal{C}$ be a $\Lambda$-modulus curve over $S$ with $C$ connected.
	Then for any $f\in \Adm(\mathcal{C})$, the components of the Weil divisor $\div(f)$ on $C^\circ$ are closed in $C$ and hence $\div(f)\in \Cor_S(S,C^\circ)$ by Lemma \ref{lem:modulus_curve_Weil}.
	We define the \emph{Chow group of relative 0-cycles} of $\mathcal{C}$ by
	$$
		\CH_0(\mathcal{C}/S):=\Coker(\Adm(\mathcal{C})\xrightarrow{\div}\Cor_S(S,C^\circ)).
	$$
	If $\mathcal{C}$ is a $\Lambda$-modulus curve over $S$ with $C$ non-connected, then we define $\CH_0(\mathcal{C}/S)$ by taking a direct sum over connected components.
\end{definition}

\begin{remark}
	When $S=\Spec k$ and $\Lambda = \mathbb{Z}$, the above definition of $\CH_0(\mathcal{C}/S)$ coincides with the definition of the \emph{relative Chow group of $0$-cycles} $\mathrm{C}(C,D_C)$ from \cite[Definition 1.6]{Kerz-Saito}.
	The latter group is also defined for higher dimensional varieties, and it was used to establish ramified higher dimensional class field theory.
\end{remark}

When $\Lambda = \mathbb{Z}$, we have the following comparison result between the Chow group of relative 0-cycles and the relative Picard group.

\begin{lemma}\label{relative_Picard_comparison}
	Let $\mathcal{C}$ be a $\mathbb{Z}$-modulus curve over $S$ such that $D_C$ is contained in some affine open subset of $C$.
	Then we have $\CH_0(\mathcal{C}/S)\simeq \Pic(C,D_C)$.
\end{lemma}

\begin{proof}
	We may assume that $C$ is connected.
	Then $\Adm(\mathcal{C})$ consists of rational functions $f$ which is regular and invertible on some neighborhood of $D_C$ and $f|_{D_C}=1$.
	On the other hand, $\Pic(C,D_C)$ is by definition the group of isomorphism classes of pairs $(L,\alpha)$ where $L$ is a line bundle on $C$ and $\alpha$ is a nowhere-vanishing section of $L|_{D_C}$.
	By our assumption that $D_C$ has an affine open neighborhood, any such $\alpha$ can be extended to a rational section of $L$ which is regular and invertible on some neighborhood of $|D_C|$.
	For two such extensions $\widetilde{\alpha}_1,\widetilde{\alpha}_2$, the quotient $\widetilde{\alpha}_1/\widetilde{\alpha}_2$ is admissible with respect to $D_C$.
	Therefore $\Pic(C,D_C)$ can be identified with the cokernel of $\div\colon \Adm(\mathcal{C})\to Q$ where $Q$ is the group of Cartier divisors on $C$ whose support is disjoint from $|D_C|$.
	We have $Q = \Cor_S(S,C^\circ)$ by Lemma \ref{lem:modulus_curve_Weil}, so the claim follows.
\end{proof}

The following result generalizes \cite[Theorem 1.1]{RY16} to $\Lambda = \mathbb{Q}$.

\begin{lemma}\label{modulus_curve_suslin_homology_lemma}
	Let $\mathcal{C}$ be a $\Lambda$-modulus curve over $S$.
	Let $\ulMCor^\Lambda_S(\bcube\times S, \mathcal{C})$ be the intersection of $\ulMCor^\Lambda_k(\bcube\times S,\mathcal{C})$ and $\Cor_S(\mathbb{A}^1\times S, C^\circ)$.
	Then we have
	$$
		\CH_0(\mathcal{C}/S) \simeq \Coker(
		\ulMCor^\Lambda_S(\bcube\times S, \mathcal{C})
		\xrightarrow{i_0^*-i_1^*}
		\Cor_S(S,C^\circ)
		).
	$$
\end{lemma}

\begin{proof}
	We may assume that $C$ is connected.
	Throughout this proof we identify $\bcube$ with $(\mathbb{P}^1,[1])$ via the isomorphism $t\mapsto \frac{t}{t-1}$, and set $\square:=\mathbb{P}^1\setminus\{1\}$.
	It suffices to prove that
	$$
		\Im(\Adm(\mathcal{C})\xrightarrow{\div} \Cor_S(S,C^\circ))=
		\Im(\ulMCor^\Lambda_S(\bcube\times S, \mathcal{C})\xrightarrow{i_0^*-i_\infty^*}\Cor_S(S,C^\circ)).
	$$
	We will prove a stronger statement that there is a surjective homomorphism
	$\Phi\colon \ulMCor^\Lambda_S(\bcube\times S, \mathcal{C})\twoheadrightarrow \Adm(\mathcal{C})$
	which makes the following diagram commutative:
	\begin{align}\label{modulus_curve_diagram}
		\xymatrix{
			\ulMCor^\Lambda_S(\bcube\times S, \mathcal{C})
			\ar@{->>}[r]^-{\Phi}\ar[rd]	_-{i_0^*-i_\infty^*}	&\Adm(\mathcal{C})\ar[d]^-{\div}\\
																	&\Cor_S(S,C^\circ).
		}
	\end{align}
	
	Let $[V]\in \ulMCor^\Lambda_S(\bcube\times S, \mathcal{C})$.
	Let $\overline{V}$ denote the scheme-theoretic closure of $V$ in $\mathbb{P}^1\times S\times C$ and $p\colon \overline{V}\to \mathbb{P}^1$, $q\colon \overline{V}\to C$ be the canonical projections.
	Then we have $p^*[1]|_{\overline{V}^N}\geq q^*D_C|_{\overline{V}^N}$ and hence $p\in \Adm(\overline{V}^N,q^*D_C|_{\overline{V}^N})$.
	We define
	$$
		\Phi([V])=\begin{cases}
			\Nm_{k(V)/k(C)}(p)		&(V\to C\text{ is generically finite}),\\
			1								&(\text{otherwise}).
		\end{cases}
	$$
	Lemma \ref{principal_divisor_push} shows that this gives a homomorphism $\Phi\colon \ulMCor^\Lambda_S(\bcube\times S, \mathcal{C})\to \Adm(\mathcal{C})$.
	
	Let us prove that (\ref{modulus_curve_diagram}) is commutative.
	Let $K$ be the function field of $S$.
	Since the operations appearing in (\ref{modulus_curve_diagram}) are compatible with base change to $\Spec K$ and the map $\Cor_S(S,C^\circ)\to \Cor_K(\Spec K,C^\circ_K)$ is injective, we may assume that $S=\Spec K$.
	In this case the claim follows from a standard computation of cycles.
	
	It remains to show that $\Phi$ is surjective.
	Let $f\in \Adm(\mathcal{C})$.
	If $f=1$, then $\Phi(0)=f$.
	Otherwise, we take an open neighborhood $U$ of $|D_C|$ satisfying the conditions (1) and (2) in Definition \ref{def:admissible_functions}.
	Then $f$ is regular and invertible on $U$.
	Define $\overline{V}\subset \mathbb{P}^1\times S\times C$ to be the closure of the graph of $f|_U\colon U\to \mathbb{A}^1\times S$.
	Let $p\colon \overline{V}\to \mathbb{P}^1$ and $q\colon \overline{V}\to C$ be the canonical projections.
	Let $V\subset \overline{V}$ denote the inverse image of $\square=\mathbb{P}^1\setminus\{1\}$ under $p$.
	Since $f=1$ holds on $|D_C|$, $V$ is contained in $\square\times C^\circ$.
	We will prove that $[V]$ gives an element of $\ulMCor^\Lambda_S(\bcube\times S, \mathcal{C})$ such that $\Phi([V])=f$.
		
	Since $f=1$ holds on $|D_C|$ and $f\neq 1$, $V$ is dominant over $\square\times S$.
	Applying Lemma \ref{modulus_curve_every_fiber} to the modulus curve $\square\times \mathcal{C}$ over $\square\times S$, we see that $V$ is finite over $\square\times S$.
	The admissibility of $V$ follows from the assumption that $\div(f-1)\geq\pi^*D_C$ holds on $U$.
	Finally, we have $\Phi([V])=\Nm_{k(V)/k(C)}(p)=f$ by construction.
\end{proof}

\begin{theorem}\label{modulus_curve_suslin_homology}
	Let $\mathcal{C}$ be a $\Lambda$-modulus curve over $k$.
	Then for any connected $S\in \Sm_k$, the canonical surjection $\Cor_k(S,C^\circ)=\Cor_S(S,S\times C^\circ)\twoheadrightarrow \CH_0(S\times \mathcal{C}/S)$ induces an isomorphism
	$$h_0(\mathcal{C})(S)\xrightarrow{\sim} \CH_0(S\times \mathcal{C}/S).$$
	Moreover, if $\Lambda = \mathbb{Z}$ and $S$ is affine, then we also have
	$\CH_0(S\times \mathcal{C}/S)\simeq \Pic(S\times C,S\times D_C)$.
\end{theorem}

\begin{proof}
	Applying Lemma \ref{modulus_curve_suslin_homology_lemma} to the $\Lambda$-modulus curve $S\times\mathcal{C}$ over $S$, we get
	\begin{align*}
	\CH_0(\mathcal{C}/S) \simeq {} &\Coker(\ulMCor^\Lambda_S(\bcube_S, S\times \mathcal{C})\xrightarrow{i_0^*-i_1^*}\Cor_S(S,S\times C^\circ))\\
	\simeq {} &\Coker(\ulMCor^\Lambda_k(\bcube_S,\mathcal{C})\xrightarrow{i_0^*-i_1^*}\Cor_k(S,C^\circ)).
	\end{align*}
	The last term is isomorphic to $h_0(\mathcal{C})(S)$ by Lemma \ref{Lh0_description}.
	The second assertion follows from Lemma \ref{relative_Picard_comparison}.
\end{proof}

\subsection{Motivic Hasse-Arf theorem}

\begin{theorem}\label{geometric Hasse-Arf for curves}
	Let $\mathcal{C}$ be a $\mathbb{Q}$-modulus curve over $k$.
	Then we have
	$$
		h_0(C,\lceil D_C\rceil)\xrightarrow{\sim} h_0(C,D_C).
	$$
\end{theorem}

\begin{proof}
	We want to prove that for any connected $S\in \Sm_k$, the canonical map
	$h_0(C,\lceil D_C\rceil)(S)\to h_0(C,D_C)(S)$
	is an isomorphism.
	By Theorem \ref{modulus_curve_suslin_homology}, it suffices to show that
	$$
		\CH_0((S\times C,S\times \lceil D_C\rceil)/S)\to \CH_0((S\times C,S\times D_C)/S)
	$$
	is an isomorphism.
	We have $S\times\lceil D_C\rceil=\lceil S\times D_C\rceil$ since $S$ is smooth over $k$.
	Now the claim follows from the fact that $\div(f-1)$ has integral coefficients for any rational function $f$ on $S\times C$.
\end{proof}

The following corollary can be seen as a motivic analogue of the Hasse-Arf theorem.

\begin{corollary}\label{Motivic Hasse-Arf for curves}
	Let $\mathcal{C}$ be a $\mathbb{Q}$-modulus curve over $k$.
	Then for any $F\in \RSC_k$ we have
	$$
		h^0F(C,D_C)\xrightarrow{\sim} h^0F(C,\lceil D_C\rceil).
	$$
\end{corollary}

\begin{proof}
	This follows from Theorem \ref{geometric Hasse-Arf for curves}.
\end{proof}

\begin{remark}
	We have $\mathbb{Z}_\tr(X,D_X)\neq \mathbb{Z}_\tr(X,\lceil D_X\rceil)$ in general, so Corollary \ref{Motivic Hasse-Arf for curves} is not obvious from the definition.
	Actually, Corollary \ref{Motivic Hasse-Arf for curves} is false for modulus pairs of higher dimensions.
\end{remark}

\section{Motivic construction of the ring of Witt vectors}\label{sec:Witt}

In this section, we present a construction of the ring of Witt vectors using $\mathbb{Q}$-modulus pairs.

\subsection{Usual construction}
First we recall the usual definition of the ring of Witt vectors; see \cite{Hes15} for details.
Let $A$ be a ring and $n\geq 0$.
The \emph{group of big Witt vectors of length $n$} of $A$ is defined by
	$$\mathbb{W}_n(A):=1+tA[t]/(t^{n+1}) \subset (A[t]/(t^{n+1}))^\times.$$
For $a\in A$, we write $[a]$ for the element $1-at\in \mathbb{W}_n(A)$.
The presheaf $X\mapsto\mathbb{W}_n(\mathcal{O}(X))$ on $\Sm_k$ is represented by an algebraic group whose underlying scheme is isomorphic to $\mathbb{A}^n$, so it can be regarded as an object of $\PSh(\Cor_k)$ (see \cite[Lemma 1.4.4]{BVK16}).
There are several important morphisms in $\PSh(\Cor_k)$:
\begin{align*}
	U&\colon \mathbb{Z} \to \mathbb{W}_n,& \star&\colon \mathbb{W}_n\otimes\mathbb{W}_n\to \mathbb{W}_n,\\
	F_s&\colon \mathbb{W}_{sn}\to \mathbb{W}_n\quad (s\geq 1),& V_s&\colon \mathbb{W}_n\to \mathbb{W}_{sn}\quad (s\geq 1).
\end{align*}
They are called the unit, the multiplication, the Frobenius, and the Verschiebung.
The unit $U$ is given by $1\mapsto [1]$, and the Verschiebung $V_s$ is induced by $t\mapsto t^s$.
The multiplication $\star$ is characterized by $[a]\star[b]=[ab]$, and the Frobenius $F_s$ is characterized by $F_s([a])=[a^s]$.
We define
$$\widehat{\mathbb{W}}_n:=\mathbb{W}_n\oplus \mathbb{G}_m\oplus \mathbb{Z} \in \PSh(\Cor_k).$$
For a ring $A$ and $a\in A^\times$, we write $[a]$ for the element $([a],a,1)\in \widehat{\mathbb{W}}_n(A)$.
We extend the natural transformations $U$, $\star$, $F_s$, and $V_s$ to $\widehat{\mathbb{W}}_n$ by setting
\begin{align*}
	&U(1)=[1],\quad (\alpha,a,m)\star(\alpha',a',m')=(\alpha\star\alpha',a^{m'}{a'}^m,mm'),\\
	&F_s(\alpha,a,m)=(F_s(\alpha),a^s,m),\quad V_s(\alpha,a,m)=(V_s(\alpha),a,sm).
\end{align*}

\begin{lemma}\label{lem:Witt_properties}
	The following assertions hold for both $\mathbb{W}_n$ and $\widehat{\mathbb{W}}_n$:
	\begin{enumerate}
		\item	$\star$ is commutative, associative, and unital with unit $U$.
		\item	$F_1=V_1=\id$, $F_sF_r=F_{sr}$, $V_sV_r=V_{sr}$.
		\item	$F_sV_s=s\cdot \id$. If $(s,r)=1$, then $F_sV_r=V_rF_s$.
		\item	$V_s\circ \star\circ (\id\otimes F_s) = \star\circ (V_s\otimes\id)$.
		\item	$\star\circ (F_s\otimes F_s)=F_s\circ \star$.
	\end{enumerate}
\end{lemma}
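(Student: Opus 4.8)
The plan is to reduce every identity in the statement to a classical identity in the theory of big Witt vectors. The main tool is the observation that the restriction functor $\PSh(\Cor_S)\to\PSh(\Sm_S)$ along $\Sm_S\hookrightarrow\Cor_S$ is faithful: this inclusion is the identity on objects, so a morphism in $\PSh(\Cor_S)$ is determined by its components $F(X)\to G(X)$ indexed by $X\in\Sm_S$, and two morphisms with the same components coincide. Consequently, any equality between two morphisms in $\PSh(\Cor_S)$ whose common source is a single object may be checked on the underlying presheaves of abelian groups, i.e.\ on the ring-valued functors $A\mapsto\mathbb{W}_n(A)$ and $A\mapsto\widehat{\mathbb{W}}_n(A)$. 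This already settles (2) and (3), whose sources are $\mathbb{W}_n$, $\widehat{\mathbb{W}}_n$, or a Verschiebung/Frobenius thereof: the relations $F_1=V_1=\id$, $F_sF_r=F_{sr}$, $V_sV_r=V_{sr}$, $F_sV_s=s\cdot\id$, and $F_sV_r=V_rF_s$ for coprime $s,r$ are standard for the big Witt ring (see \cite{Hes15}), and the extension to $\widehat{\mathbb{W}}_n=\mathbb{W}_n\oplus\mathbb{G}_m\oplus\mathbb{Z}$ follows by a direct computation on the $\mathbb{G}_m\oplus\mathbb{Z}$-summand from the defining formulas for $F_s$ and $V_s$.

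For the identities involving the tensor product --- the commutativity, associativity, and unitality of $\star$ in (1), and the relations (4) and (5) --- I will use the following form of the co-Yoneda lemma. Since $\mathbb{Z}_\tr(X)\otimes\mathbb{Z}_\tr(Y)\simeq\mathbb{Z}_\tr(X\times_S Y)$ and $\PSh(\Cor_S)$ is generated under colimits by representables, $F_1\otimes\dots\otimes F_n$ is a colimit of representables of the form $\mathbb{Z}_\tr(X_1\times_S\dots\times_S X_n)$, and the image of $\id$ under the tautological map $\mathbb{Z}_\tr(X_1\times_S\dots\times_S X_n)\to F_1\otimes\dots\otimes F_n$ is the pure tensor $\pr_1^*a_1\otimes\dots\otimes\pr_n^*a_n$, where $a_i\in F_i(X_i)$ corresponds to the given map to $F_i$. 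Hence a morphism out of $F_1\otimes\dots\otimes F_n$ in $\PSh(\Cor_S)$ is uniquely determined by its values on the pure tensors $a_1\otimes\dots\otimes a_n\in(F_1\otimes\dots\otimes F_n)(X)$ for $X\in\Sm_S$ and $a_i\in F_i(X)$. Applying this with the $F_i$ among $\mathbb{W}_n$, $\widehat{\mathbb{W}}_n$, and $\mathbb{Z}$, and using that $U$ corresponds to $[1]$, that $[a]\star[b]=[ab]$, that $F_s([a])=[a^s]$, and that $V_s$ is induced by $t\mapsto t^s$, each of (1), (4), (5) turns into the corresponding identity for the operations on $\mathbb{W}_n(A)$ and $\widehat{\mathbb{W}}_n(A)$: commutativity, associativity, and unitality of Witt multiplication; the projection formula $V_s(x\star F_s(y))=V_s(x)\star y$; and multiplicativity of the Frobenius $F_s(x\star y)=F_s(x)\star F_s(y)$. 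All of these are classical; for $\widehat{\mathbb{W}}_n$ one again reduces to the $\mathbb{W}_n$-case on the first summand and checks the elementary exponent identities (e.g.\ $(a^{m'}{a'}^{m})^{m''}{a''}^{mm'}=a^{m'm''}{a'}^{mm''}{a''}^{mm'}$) on the $\mathbb{G}_m\oplus\mathbb{Z}$-part.

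I do not anticipate a genuine obstacle: the content is essentially the translation of known Witt-vector identities into $\PSh(\Cor_S)$. The only point needing care is the tensor bookkeeping for (1), (4), (5) --- justifying that testing on pure tensors $a_1\otimes\dots\otimes a_n$ detects equality of morphisms out of a tensor product --- which rests on $\mathbb{Z}_\tr(X)\otimes\mathbb{Z}_\tr(Y)\simeq\mathbb{Z}_\tr(X\times_S Y)$ and on the fact that $\star$, $F_s$, $V_s$ (also on $\widehat{\mathbb{W}}_n$) are morphisms in $\PSh(\Cor_S)$ in the first place; the latter is why $\star$ must be seen to be biadditive and the Frobenius and Verschiebung to be homomorphisms of group schemes, which is where the transfer structure of Theorem \ref{thm:app_summary} enters.
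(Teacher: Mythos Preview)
Your proposal is correct and takes essentially the same approach as the paper: reduce to the classical identities for big Witt vectors. The paper's proof is a one-liner --- ``The statement for $\mathbb{W}_n$ is well-known (see e.g.\ \cite{Hes15}), and the statement for $\widehat{\mathbb{W}}_n$ follows easily from this'' --- while you supply the reduction mechanism explicitly (faithfulness of restriction to $\Sm_S$, and determination of tensor-source morphisms by their values on pure tensors); the content is the same.
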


\begin{proof}
	The statement for $\mathbb{W}_n$ is well-known (see e.g. \cite{Hes15}), and the statement for $\widehat{\mathbb{W}}_n$ follows easily from this.
\end{proof}

\begin{lemma}
	For any $n\geq 0$, $s\geq 1$, and $0\leq m\leq sn$, the morphism $V_sF_s\colon \mathbb{W}_{sn}\to \mathbb{W}_{sn}$ descends to $\mathbb{W}_{m}\to \mathbb{W}_{m}$.
	The same also holds for $\widehat{\mathbb{W}}_n$.
\end{lemma}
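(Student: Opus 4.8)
The plan is to reduce the statement to a calculation with group schemes. Since $m\le sn$, the truncation $\pi\colon\mathbb{W}_{sn}\to\mathbb{W}_m$ (forgetting the coefficients of $t^i$ for $i>m$) is a morphism of commutative group schemes over $S$, and it admits a section as a morphism of schemes (pad by zeros). Hence, to prove the lemma it suffices to produce a morphism of group schemes $\psi\colon\mathbb{W}_m\to\mathbb{W}_m$ with $\psi\circ\pi=\pi\circ V_sF_s$ and then apply the functor of Theorem \ref{thm:app_summary}. Because $\pi$ is a split epimorphism of schemes and $\pi,V_sF_s$ are group homomorphisms, such a $\psi$ exists — automatically as a morphism of group schemes — precisely when $\pi\circ V_sF_s$ is constant on the fibres of $\pi$ over every ring $R$; equivalently, when $V_sF_s\colon\mathbb{W}_{sn}(R)\to\mathbb{W}_{sn}(R)$ sends the kernel of $\pi(R)$, namely $1+t^{m+1}R[t]/(t^{sn+1})$, into itself, for every $R$.

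The key input is the product formula
\[
	(V_sF_sf)(t)=\prod_{j=0}^{s-1}f(\zeta_s^jt),
\]
valid for $f\in\mathbb{W}_{sn}(R)$ whenever $R$ contains a primitive $s$-th root of unity $\zeta_s$. I would either cite this from \cite{Hes15}, or prove it directly: both sides define morphisms $\mathbb{W}_{sn}\to\mathbb{W}_{sn}$ of group schemes over $\mathbb{Z}[\zeta_s]$, so the identity may be checked over the torsion-free ring $\mathbb{Q}(\zeta_s)[a_1,\dots,a_{sn}]$ with $f=1+\sum_ia_it^i$, where the ghost map is injective; there one uses $w_k(f(\zeta t))=\zeta^kw_k(f)$ together with $\sum_{j=0}^{s-1}\zeta_s^{jk}=s$ if $s\mid k$ and $0$ otherwise, which exhibits both sides as having ghost components equal to $sw_k(f)$ for $s\mid k$ and to $0$ for $s\nmid k$.

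Granting the formula, the kernel statement is immediate. For an arbitrary ring $R$, set $R'=R\otimes_{\mathbb{Z}}\mathbb{Z}[\zeta_s]$; since $\mathbb{Z}[\zeta_s]$ is free over $\mathbb{Z}$ with $1$ part of a basis, $R\to R'$ is split injective, hence so is $\mathbb{W}_{sn}(R)\to\mathbb{W}_{sn}(R')$. Over $R'$, if $f\equiv1\pmod{t^{m+1}}$ then $f(\zeta_s^jt)\equiv1\pmod{t^{m+1}}$ for each $j$ (as $\zeta_s^j\in(R')^\times$), so $V_sF_sf=\prod_jf(\zeta_s^jt)\equiv1\pmod{t^{m+1}}$; restricting along $\mathbb{W}_{sn}(R)\hookrightarrow\mathbb{W}_{sn}(R')$ gives $V_sF_sf\in1+t^{m+1}R[t]/(t^{sn+1})$, as needed. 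This produces $\psi$ and settles the case of $\mathbb{W}_n$.

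For $\widehat{\mathbb{W}}_n=\mathbb{W}_n\oplus\mathbb{G}_m\oplus\mathbb{Z}$ the claim is then formal. Under this decomposition the truncation $\widehat{\mathbb{W}}_{sn}\to\widehat{\mathbb{W}}_m$ is $\pi\oplus\mathrm{id}\oplus\mathrm{id}$, while the defining formulas give that $V_sF_s$ acts as $(V_sF_s)\oplus(x\mapsto x^s)\oplus(\text{multiplication by }s)$ on the three summands; the first was just handled, and the remaining two commute trivially with the identity, so $\psi\oplus(x\mapsto x^s)\oplus(\text{multiplication by }s)$ is the desired descent. The only substantive ingredient is the product formula for $V_sF_s$, together with the reduction that makes it suffice to check $V_sF_s$ preserves $\ker\pi$; the rest is routine, so I do not anticipate a genuine obstacle.
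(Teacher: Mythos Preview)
Your argument is correct, but the paper takes a much shorter and more elementary route that avoids roots of unity entirely. The paper simply factors $V_sF_s$ through $\mathbb{W}_{\lfloor m/s\rfloor}$: since $F_s$ is defined as a map $\mathbb{W}_{sk}\to\mathbb{W}_k$ for every $k$, one has the composite $\mathbb{W}_m\twoheadrightarrow\mathbb{W}_{s\lfloor m/s\rfloor}\xrightarrow{F_s}\mathbb{W}_{\lfloor m/s\rfloor}$; and because $V_s$ is literally $t\mapsto t^s$, the inequality $s(\lfloor m/s\rfloor+1)\geq m+1$ shows that $V_s\colon\mathbb{W}_{\lfloor m/s\rfloor}\to\mathbb{W}_{s\lfloor m/s\rfloor}$ lifts to a map $\mathbb{W}_{\lfloor m/s\rfloor}\to\mathbb{W}_m$. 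Composing gives the descent directly, with no need for the product formula, ghost components, or base change to $\mathbb{Z}[\zeta_s]$.

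What your approach buys is a direct explanation of why $V_sF_s$ preserves the entire $t$-adic filtration on $\mathbb{W}_{sn}$: the product formula makes this transparent, whereas the paper's factorization is more of a trick tailored to the specific truncation. On the other hand, the paper's argument is a single sentence and uses nothing beyond the explicit description of $V_s$, so it is considerably lighter. Your treatment of the $\widehat{\mathbb{W}}$ case matches the paper's in spirit (both reduce immediately to the $\mathbb{W}$ case via the direct-sum decomposition).
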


\begin{proof}
	Since $s(\lfloor m/s\rfloor+1)\geq m+1$, the morphism
	$V_s\colon \mathbb{W}_{\lfloor m/s\rfloor}\to \mathbb{W}_{s\lfloor m/s\rfloor}$
	lifts to
	$\mathbb{W}_{\lfloor m/s\rfloor}\to \mathbb{W}_m$.
	Composing with $\mathbb{W}_m\twoheadrightarrow \mathbb{W}_{s\lfloor m/s\rfloor}\xrightarrow{F_s} \mathbb{W}_{\lfloor m/s\rfloor}$, we get the desired morphism.
\end{proof}

Suppose that $\ch(k)=p>0$.
For a prime number $\ell$ different from $p$, we have an endomorphism
$\ell^{-1}V_\ell F_\ell$ of $(\widehat{\mathbb{W}}_n)_{(p)}=\widehat{\mathbb{W}}_n\otimes \mathbb{Z}_{(p)}$.
By Lemma \ref{lem:Witt_properties}, $\ell^{-1}V_\ell F_\ell$ is idempotent and hence defines a direct summand $\Im(\ell^{-1}V_\ell F_\ell)$ of $(\widehat{\mathbb{W}}_n)_{(p)}$.
For $n\geq 1$, the \emph{presheaf of $p$-typical Witt vectors of length $n$} is defined by
$$
	W_n:=(\widehat{\mathbb{W}}_{p^{n-1}})_{(p)}/\textstyle\sum_{\ell\neq p}\Im (\ell^{-1}V_\ell F_\ell) \in \PSh(\Cor_k)
$$
where the sum is taken over all prine numbers different from $p$.

\begin{lemma}
	The above definition of $W_n$ coincides with the usual one (e.g. \cite{Hes15}).
\end{lemma}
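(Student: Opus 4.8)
The plan is to realize the comparison as a zigzag of maps that are natural in the ring and are induced by morphisms of commutative group schemes (projections of direct sums, and the decomposition of \cite{Hes15} recalled below), so that, exactly as in the rest of this section, they underlie morphisms in $\PSh(\Cor_S)$. It then suffices to check the comparison is bijective after evaluation on an arbitrary affine $X\in\Sm_S$, i.e. on an arbitrary $\mathbb{Z}_{(p)}$-algebra $A=\mathcal{O}(X)$ (recall $S$ is a $\mathbb{Z}_{(p)}$-scheme), and that it intertwines the Frobenius $F=F_p$, the Verschiebung $V=V_p$, and the multiplication $\star$.

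First I would strip off the auxiliary summands of $\widehat{\mathbb{W}}$. Using the formulas $F_s(\alpha,a,m)=(F_s\alpha,a^s,m)$ and $V_s(\alpha,a,m)=(V_s\alpha,a,sm)$, one computes in $\widehat{\mathbb{W}}_{p^{n-1}}(A)\otimes\mathbb{Z}_{(p)}$ that $\ell^{-1}V_\ell F_\ell(\alpha,a,m)=(\ell^{-1}V_\ell F_\ell\,\alpha,\ a,\ m)$: the idempotent $e_\ell:=\ell^{-1}V_\ell F_\ell$ acts as $\ell^{-1}V_\ell F_\ell$ on the summand $\mathbb{W}_{p^{n-1}}(A)\otimes\mathbb{Z}_{(p)}$ and as the identity on $(\mathbb{G}_m(A)\oplus\mathbb{Z})\otimes\mathbb{Z}_{(p)}$, where one uses that $\ell$ is invertible in $\mathbb{Z}_{(p)}$, so that the maps $a\mapsto a^\ell$ on $\mathbb{G}_m$ and $m\mapsto\ell m$ on $\mathbb{Z}$ become invertible after $\otimes\mathbb{Z}_{(p)}$. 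Hence $\Im(e_\ell)=\Im\bigl(\ell^{-1}V_\ell F_\ell|_{\mathbb{W}_{p^{n-1}}\otimes\mathbb{Z}_{(p)}}\bigr)\oplus(\mathbb{G}_m\oplus\mathbb{Z})\otimes\mathbb{Z}_{(p)}$, and since there exists a prime $\ell\neq p$ we obtain a natural identification
$$W_n(A)\ =\ \bigl(\mathbb{W}_{p^{n-1}}(A)\otimes\mathbb{Z}_{(p)}\bigr)\Big/\!\sum_{\ell\neq p}\Im\bigl(\ell^{-1}V_\ell F_\ell\bigr),$$
compatible with $F_p$, $V_p$ and $\star$.

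It remains to match this quotient with the usual $p$-typical Witt ring, and for this I would invoke the classical decomposition of big Witt vectors of a $\mathbb{Z}_{(p)}$-algebra (see \cite{Hes15}): there is a natural ring isomorphism
$$\mathbb{W}_{p^{n-1}}(A)\ \xrightarrow{\ \sim\ }\ \bigoplus_{\substack{1\le d\le p^{n-1}\\ \gcd(d,p)=1}}W_{n_d}(A),\qquad n_d=\#\{\,j\ge 0:\ dp^j\le p^{n-1}\,\},$$
where the $d$-th summand is the $p$-typical Witt ring on $\{d,dp,\dots,dp^{\,n_d-1}\}$ identified with $W_{n_d}(A)$, the $d=1$ summand is exactly $W_n(A)$, and the isomorphism is compatible with $F_p$, $V_p$, $\star$. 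A ghost-component computation — legitimate because the relevant maps are morphisms of group schemes over $\mathbb{Z}_{(p)}$, hence determined by their effect on $p$-torsion-free rings, where the ghost map is injective — shows that $V_\ell F_\ell$ acts on the $d$-th summand as multiplication by $\ell$ when $\ell\mid d$ (every ghost index there is a multiple of $d$, hence of $\ell$) and as $0$ when $\ell\nmid d$; thus $e_\ell$ is the projection onto $\bigoplus_{\ell\mid d}W_{n_d}(A)$ (in particular $e_\ell=0$ unless $\ell\le p^{n-1}$, so the sum above is finite). Since an integer $d$ coprime to $p$ satisfies $d>1$ iff it is divisible by some prime $\ell\neq p$, we get $\sum_{\ell\neq p}\Im(e_\ell)=\bigoplus_{d>1}W_{n_d}(A)$, so the quotient is canonically the $d=1$ summand $W_n(A)$, with $F$, $V$, $\star$ carried to the usual operations. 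The only substantial input is the cited classical decomposition; the real (if modest) obstacle is the two bookkeeping reductions — discarding the $\mathbb{G}_m\oplus\mathbb{Z}$ summands, which is where one needs a prime different from $p$ to exist, and recognizing each $e_\ell$ as a coordinate projection via ghost components.
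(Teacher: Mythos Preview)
Your proof is correct and follows essentially the same approach as the paper's: first observe that the idempotent $e_\ell=\ell^{-1}V_\ell F_\ell$ acts as the identity on the summand $(\mathbb{G}_m\oplus\mathbb{Z})\otimes\mathbb{Z}_{(p)}$ (the paper phrases this as $\ell^{-1}V_\ell F_\ell(0,a,m)=(0,a,m)$), reduce to a quotient of $\mathbb{W}_{p^{n-1}}\otimes\mathbb{Z}_{(p)}$, and then invoke the $p$-typical decomposition of big Witt vectors over a $\mathbb{Z}_{(p)}$-algebra. The only difference is one of detail: the paper cites the decomposition from \cite{Hes15} as a black box, whereas you unpack it by identifying each $e_\ell$ on ghost components with the projection onto the summands indexed by $d$ with $\ell\mid d$; this is exactly the content behind that reference, so your argument is the same route with one layer of citation unwound.
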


\begin{proof}
	Since $\ell^{-1} V_\ell F_\ell(0,a,m)=(0,a,m)$, the subpresheaf $(0\oplus \mathbb{G}_m\oplus \mathbb{Z})_{(p)}$ of $(\widehat{\mathbb{W}}_{p^{n-1}})_{(p)}$ is contained in $\Im (\ell^{-1} V_\ell F_\ell)$.
	Moreover, $\mathbb{W}_{p^{n-1}}$ is a presheaf of $\mathbb{Z}_{(p)}$-modules since $\ch(k)=p$.
	Therefore we have
	$$
		W_n=\mathbb{W}_{p^{n-1}}/\textstyle\sum_{\ell\neq p}\Im (\ell^{-1} V_\ell F_\ell).
	$$
	Now the claim follows from the $p$-typical decomposition of $\mathbb{W}_{p^{n-1}}$ (see \cite[Proposition 1.10]{Hes15}).
\end{proof}

By Lemma \ref{lem:Witt_properties}, the homomorphisms $U$, $\star$, $F_p$, and $V_p$ descend to
\begin{align*}
	U&\colon \mathbb{Z}_{(p)}\to W_n,&\star&\colon W_n\otimes W_n\to W_n,\\
	F&\colon W_{n+1}\to W_n,&V&\colon W_n\to W_{n+1}
\end{align*}
and we have the following
\begin{lemma}\label{lem:p_Witt_properties}
	The following assertions hold for $W_n$:
	\begin{enumerate}
		\item	$\star$ is commutative, associative, and unital with unit $U$.
		\item	$FV=p\cdot \id$.
		\item	$V\circ\star\circ(\id\otimes F) = \star\circ(V\otimes\id)$.
		\item	$\star\circ (F\otimes F)=F\circ \star$.
	\end{enumerate}
\end{lemma}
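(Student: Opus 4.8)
The plan is to deduce each assertion from the corresponding one in Lemma \ref{lem:Witt_properties} by passing to the quotient. Recall that $W_n$ is by construction a quotient presheaf of $\widehat{\mathbb{W}}_{p^{n-1}}\otimes\mathbb{Z}_{(p)}$; write $\pi\colon\widehat{\mathbb{W}}_{p^{n-1}}\otimes\mathbb{Z}_{(p)}\twoheadrightarrow W_n$ for the projection. The operations $U$, $\star$, $F=F_p$, $V=V_p$ on the various $W_n$ were defined precisely so that $\pi$ and its tensor powers intertwine them with $U$, $\star$, $F_p$, $V_p$ on the $\widehat{\mathbb{W}}$'s; this intertwining is exactly what was invoked in the paragraph preceding the lemma when those operations were said to descend.

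First I would record that for each $k\ge 1$ the map $\pi^{\otimes k}\colon(\widehat{\mathbb{W}}_{p^{n-1}}\otimes\mathbb{Z}_{(p)})^{\otimes k}\to W_n^{\otimes k}$ is again an epimorphism, since $\otimes$ is right exact in $\PSh(\Cor_S)$ (it admits the right adjoint $\intHom$). Assertion (1) then follows: the two sides of each of the commutativity, associativity and unitality identities for $\star$ on $W_n$ agree after precomposition with the epimorphism $\pi^{\otimes 2}$, resp. $\pi^{\otimes 3}$, resp. $\pi$, by Lemma \ref{lem:Witt_properties} (1). Assertion (2) is the image under $\pi$ of the identity $F_pV_p=p\cdot\id$ on $\widehat{\mathbb{W}}_{p^{n-1}}$, which is the case $s=p$ of Lemma \ref{lem:Witt_properties} (3). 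Assertions (3) and (4), after precomposing with $\pi^{\otimes 2}$ and using that $F,V$ on $W_\bullet$ come from $F_p,V_p$, reduce to the cases $s=p$ of Lemma \ref{lem:Witt_properties} (4) and (5) respectively.

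The one place that deserves to be spelled out — and the only potential obstacle — is the verification that $F_p$ and $V_p$ genuinely descend along the quotients defining the $W_n$'s, i.e. that $V_p$ sends $\sum_{\ell\ne p}\Im(\ell^{-1}V_\ell F_\ell)$ on $\widehat{\mathbb{W}}_{p^{n-1}}$ into $\sum_{\ell\ne p}\Im(\ell^{-1}V_\ell F_\ell)$ on $\widehat{\mathbb{W}}_{p^n}$, and similarly for $F_p$. This is immediate from the commutation relations $V_pV_\ell=V_\ell V_p$ and $V_pF_\ell=F_\ell V_p$ (and their $F_p$-analogues) supplied by Lemma \ref{lem:Witt_properties} (2)--(3), which give $V_p\circ(\ell^{-1}V_\ell F_\ell)=(\ell^{-1}V_\ell F_\ell)\circ V_p$ and likewise for $F_p$. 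Since this compatibility is already subsumed in the statement that the operations descend (made just before the lemma), no real difficulty remains and the proof is a routine diagram chase, so in the write-up I would simply state that the assertions follow from Lemma \ref{lem:Witt_properties} by passing to the quotient and leave the verification to the reader, or indicate it in one line.
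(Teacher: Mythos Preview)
Your proposal is correct and matches the paper's approach exactly: the paper gives no explicit proof, simply stating (in the sentence preceding the lemma) that by Lemma \ref{lem:Witt_properties} the operations $U$, $\star$, $F_p$, $V_p$ descend to $W_n$, and then recording the resulting identities. Your write-up is a faithful unpacking of this one-line justification.
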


\subsection{Motivic construction}

\begin{definition}
For $n\geq 0$, we define $\mathbb{W}^+_n, \widehat{\mathbb{W}}^+_n\in \PSh(\MCor^\mathbb{Q}_k)$ by
$$
	\mathbb{W}^+_n:= \varinjlim_{\varepsilon>0}\mathbb{Z}_\tr(\mathbb{P}^1,(n+\varepsilon)[\infty])/\mathbb{Z},\quad
	\widehat{\mathbb{W}}^+_n:= \varinjlim_{\varepsilon>0} \mathbb{Z}_\tr(\mathbb{P}^1,\varepsilon[0]+(n+\varepsilon)[\infty])
$$
where $\mathbb{Z}$ is viewed as a direct summand of $\mathbb{Z}_\tr(\mathbb{P}^1,(n+\varepsilon)[\infty])$ via $i_0\colon \Spec k\to \mathbb{A}^1$.
\end{definition}

\begin{definition}\label{def:motivic_operations}
Let $n\geq 0$.
We define the unit, the multiplication, the Frobenius, and the Verschiebung on $\mathbb{W}^+_n$ as follows.
\begin{enumerate}
\item We define $U\colon \mathbb{Z}\to \mathbb{W}^+_n$ to be the morphism induced by $i_1\colon \Spec k\to \mathbb{A}^1$.
\item We define $\star\colon \mathbb{W}^+_n \otimes \mathbb{W}^+_n \to \mathbb{W}^+_n$ to be the morphism induced by the multiplication map $\mu \colon \AA^1 \times \AA^1 \to \AA^1;\; (x,y) \mapsto xy$ (see Lemma \ref{cube_multiplication_admissible}).
\item For $s\geq 1$, we define $F_s\colon \mathbb{W}^+_{sn}\to \mathbb{W}^+_n$ to be the morphism induced by $\rho_s\colon \AA^1 \to \AA^1;\;x \mapsto x^s$.
\item For $s\geq 1$, we define $V_s\colon \mathbb{W}^+_{n}\to \mathbb{W}^+_{sn}$ to be the morphism induced by ${}^t\rho_s$.
\end{enumerate}
Similarly, we define the unit, the multiplication, the Frobenius, and the Verschiebung on $\widehat{\mathbb{W}}^+_n$.
For $X=\Spec A\in \Sm_k^\aff$ and $a\in A$ (resp. $a\in A^\times$), we write $[a]$ for the element of $h_0\mathbb{W}^+_n(X)$ (resp. $h_0\widehat{\mathbb{W}}^+_n(X)$) represented by the morphism $a\colon X\to \mathbb{A}^1$ (resp. $a\colon X\to \mathbb{A}^1\setminus\{0\}$).
\end{definition}

\begin{lemma}\label{lem:motivic_Witt_properties}
	The following assertions hold for both $\mathbb{W}^+_n$ and $\widehat{\mathbb{W}}^+_n$:
	\begin{enumerate}
		\item	$\star$ is commutative, associative, and unital with unit $U$.
		\item	$F_1=V_1=\id$, $F_sF_r=F_{sr}$, $V_sV_r=V_{sr}$.
		\item	$F_sV_s=s\cdot \id$. If $(s,r)=1$, then $F_sV_r=V_rF_s$.
		\item	$V_s\circ\star\circ (\id\otimes F_s) = \star\circ (V_s\otimes \id)$.
		\item	$\star\circ(F_s\otimes F_s)=F_s\circ \star$.
	\end{enumerate}
\end{lemma}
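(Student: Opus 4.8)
The plan is to reduce every assertion to an identity of finite correspondences on powers of $\mathbb{A}^1_S$. By construction (Definition \ref{def:motivic_operations}), the morphisms $U$, $\star$, $F_s$, $V_s$ on $\mathbb{W}^+_n$ are induced by modulus correspondences between the $\mathbb{Q}$-modulus pairs $(\mathbb{P}^1_S,(n+\varepsilon)\{\infty\})$ — respectively by the unit section $i_1$, the multiplication $\mu\colon(x,y)\mapsto xy$, the morphism $\rho_s\colon x\mapsto x^s$, and the transpose ${}^t\rho_s$ of the finite flat morphism $\rho_s$ — and likewise over the pairs $(\mathbb{P}^1_S,\varepsilon\{0\}+(n+\varepsilon)\{\infty\})$ for $\widehat{\mathbb{W}}^+_n$. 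Since each $\mathbb{Z}_\tr(\mathcal{X})$ is finitely presented in $\PSh(\QCor^\prop_S)$, a morphism out of $\mathbb{W}^+_n$ (or a tensor power thereof) is, by a filtered-colimit argument, determined by its restrictions to the pieces $\mathbb{Z}_\tr(\mathcal{X})/\mathbb{Z}$, hence by germs of modulus correspondences between box products of the pairs above; and as $c_S(\mathcal{X},\mathcal{Y})$ is a subgroup of $c_S(X^\circ,Y^\circ)$, an identity between two such morphisms may be checked after passing to the interiors, i.e.\ as an identity of finite correspondences on products of copies of $\mathbb{A}^1_S$. No admissibility has to be re-verified here, since it is already part of Definition \ref{def:motivic_operations}. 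For the element-level assertions (4) and (5) one first unwinds $a_1\otimes a_2$ through the diagonal and the projection formula for presheaves with transfers, reducing these too to identities of correspondences on $\mathbb{A}^1_S\times_S\mathbb{A}^1_S$.

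With this reduction, the ``graph-level'' identities are immediate from the corresponding identities of morphisms of schemes, via Lemma \ref{lem:composition_formula} (which identifies composites of graph correspondences with graphs of composites, and records compatibility with pushforward). Thus commutativity, associativity and unitality of $\star$ come from the same properties of $\mu$ together with $\mu\circ(\id\times i_1)=\id$; the relations $\rho_1=\id$ and $\rho_s\circ\rho_r=\rho_{rs}$ give $F_1=\id$ and $F_sF_r=F_{sr}$; and the ``$\rho$-part'' of (4)--(5) not involving the transpose — in particular $\star\circ(F_s\otimes F_s)=F_s\circ\star$, which on $\mathbb{A}^1_S\times_S\mathbb{A}^1_S$ reads $(x,y)\mapsto x^sy^s=(xy)^s$ — is again an equality of morphisms of schemes. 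The dual relations $V_1=\id$ and $V_sV_r=V_{sr}$ follow from the computation ${}^t\rho_s\circ{}^t\rho_r={}^t\rho_{sr}$, which is a direct calculation of the relevant fibre product using Lemma \ref{lem:pullback_lemma} and Lemma \ref{lem:composition_formula}.

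The substantive identities are those genuinely involving ${}^t\rho_s$, which I would handle by explicit computation of compositions of non-graph correspondences. (i) For $F_sV_s=s\cdot\id$: the composite $\rho_s\circ{}^t\rho_s$ equals $(\id\times\rho_s)_*[\{y=w^s\}]$, and the integral subscheme $\{y=w^s\}\cong\mathbb{A}^1_S$ maps to the diagonal of $\mathbb{A}^1_S\times_S\mathbb{A}^1_S$ by the finite flat degree-$s$ morphism $w\mapsto(w^s,w^s)$, so its pushforward is $s[\Delta]$. (ii) For $F_sV_r=V_rF_s$ when $(s,r)=1$: both $\rho_s\circ{}^t\rho_r$ and ${}^t\rho_r\circ\rho_s$ compute to the cycle of the closed subscheme $\{y^s=z^r\}\subset\mathbb{A}^1_S\times_S\mathbb{A}^1_S$ taken with multiplicity one, coprimality being exactly what makes this subscheme integral and the parametrization $w\mapsto(w^r,w^s)$ birational onto it. (iii) For $V_s\circ\star\circ(\id\otimes F_s)=\star\circ(V_s\otimes\id)$: after unwinding, both sides are the same hypersurface in $\mathbb{A}^3_S$ (essentially $\{z^s=ab^s\}$ read two ways), which amounts to the projection formula for the finite flat cover $\rho_s$; I would deduce it from the projection formula already available for $\PSh(\Cor_S)$, after translating the two sides into statements about pulling back and pushing forward relative cycles along $\rho_s\times\id$. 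Finally, the entire argument applies verbatim to $\widehat{\mathbb{W}}^+_n$: the morphisms $\mu$, $\rho_s$, ${}^t\rho_s$ respect the extra divisor component $\varepsilon\{0\}$ as well (the divisor conditions needed are the evident ones, e.g.\ $\rho_s^*\{0\}=s\{0\}$), so the very same correspondence identities yield the same relations, with no $\mathbb{Z}$-quotient to track. I expect the main obstacle to be step (ii): keeping the composition-of-correspondences bookkeeping clean enough to pin down the multiplicities — especially along the ramification point $\{0\}$ of $\rho_s$ — and making the coprimality argument airtight.
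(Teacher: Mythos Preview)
Your proposal is correct and matches the paper's proof essentially line for line: the paper reduces each assertion to an identity of finite correspondences on powers of $\mathbb{A}^1_S$, handling (3) via $\rho_s\circ{}^t\rho_s = s\cdot\id$ and the observation that both $\rho_s\circ{}^t\rho_r$ and ${}^t\rho_r\circ\rho_s$ are the cycle $\{x^s=y^r\}$, and (4) via the cycle $\{xy^s=z^s\}$ on $\mathbb{A}^3_S$. Your worry about step (ii) is unwarranted---the paper dispatches it in one sentence, since once both composites are identified with the subscheme $\{x^s=y^r\}$ (integral when $(s,r)=1$, exactly as you note), equality of cycles is immediate and no multiplicity bookkeeping at $0$ is needed.
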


\begin{proof}
	(1) follows from the corresponding properties of $\mu\colon \mathbb{A}^1\times\mathbb{A}^1\to \mathbb{A}^1$.
	(2) follows from $\rho_1=\id$ and $\rho_r\circ\rho_s=\rho_{rs}$.
	The first assertion in (3) follows from $\rho_s{}^t\circ\rho_s=s\cdot\id$.
	The second assertion follows from the fact that $\rho_s{}^t\circ\rho_r$ and ${}^t\rho_r\circ\rho_s$ are both represented by the cycle on $\mathbb{A}^2$ defined by $x^s=y^r$.
	To prove (4), it suffices to show that the following diagram in $\Cor_k$ is commutative:
	$$
		\xymatrix{
			\mathbb{A}^1\times\mathbb{A}^1\ar[rr]^-{{}^t\rho_s\times\id}\ar[d]^-{\id\times \rho_s}		&&\mathbb{A}^1\times\mathbb{A}^1\ar[d]^-\mu\\
			\mathbb{A}^1\times\mathbb{A}^1\ar[r]^-\mu		&\mathbb{A}^1\ar[r]^-{{}^t\rho_s}		&\mathbb{A}^1.
		}
	$$
	One can easily check that both compositions are represented by the cycle on $\mathbb{A}^3$ defined by $xy^s=z^s$.
	Finally, (5) follows from $\rho_s\circ \mu=\mu\circ(\rho_s\times\rho_s)$.
\end{proof}

\begin{lemma}
	For any $n\geq 0$, $s\geq 1$ and $0\leq m\leq sn$, the morphism $V_sF_s\colon \mathbb{W}^+_{sn}\to \mathbb{W}^+_{sn}$ descends to $\mathbb{W}^+_m\to \mathbb{W}^+_m$.
	The same also holds for $\widehat{\mathbb{W}}_n$.
\end{lemma}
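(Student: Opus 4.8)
The plan is to adapt the proof of the preceding lemma (the same statement for $\mathbb{W}_n$), the one new ingredient being that the auxiliary Witt object is allowed a non-integral modulus. For $0\le m\le sn$ the identity of $\mathbb{P}^1_S$ is an ambient morphism $(\mathbb{P}^1_S,(sn+\varepsilon)\{\infty\})\to (\mathbb{P}^1_S,(m+\varepsilon)\{\infty\})$ and thus induces a ``truncation'' $\pi\colon \mathbb{W}^+_{sn}\to \mathbb{W}^+_m$ (and likewise $\widehat{\mathbb{W}}^+_{sn}\to\widehat{\mathbb{W}}^+_m$); the claim is that there is a morphism $g_m\colon \mathbb{W}^+_m\to \mathbb{W}^+_m$ with $g_m\circ\pi=\pi\circ V_sF_s$. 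Since $m/s$ need not be an integer, instead of factoring through $\mathbb{W}_{\lfloor m/s\rfloor}$ as in the classical argument, I would first introduce, for every $r\in \mathbb{Q}_{\ge 0}$, the modulus presheaf with transfers
$$
\mathbb{W}^+_r:=\varinjlim_{\varepsilon>0}\mathbb{Z}_\tr(\mathbb{P}^1_S,(r+\varepsilon)\{\infty\})/\mathbb{Z},
$$
agreeing with the given definition for integral $r$, and in particular the $\mathbb{Q}$-modulus pairs $(\mathbb{P}^1_S,(m/s+\varepsilon)\{\infty\})$.

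Next I would extend the Frobenius and Verschiebung to these objects exactly as in Definition \ref{def:motivic_operations}. Since $\rho_s^*\{\infty\}=s\{\infty\}$, the map $\rho_s\colon x\mapsto x^s$ is a minimal ambient morphism $(\mathbb{P}^1_S,(sr+\varepsilon)\{\infty\})\to (\mathbb{P}^1_S,(r+\varepsilon/s)\{\infty\})$ and its transpose ${}^t\rho_s$ is a modulus correspondence $(\mathbb{P}^1_S,(r+\varepsilon)\{\infty\})\to (\mathbb{P}^1_S,(sr+s\varepsilon)\{\infty\})$; the admissibility check is the verbatim translation of the one already done for integral parameters. Passing to the colimit over $\varepsilon$ yields $F_s\colon \mathbb{W}^+_{sr}\to \mathbb{W}^+_r$ and $V_s\colon \mathbb{W}^+_r\to \mathbb{W}^+_{sr}$ for all $r\in\mathbb{Q}_{\ge 0}$. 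As $m/s\le n$, I would then define $g_m$ to be the composite
$$
\mathbb{W}^+_m\xrightarrow{F_s}\mathbb{W}^+_{m/s}\xrightarrow{V_s}\mathbb{W}^+_m .
$$

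Finally I would verify $g_m\circ\pi=\pi\circ V_sF_s$ by pasting two commuting squares: the square with horizontal arrows $F_s\colon\mathbb{W}^+_{sn}\to\mathbb{W}^+_n$ and $F_s\colon\mathbb{W}^+_m\to\mathbb{W}^+_{m/s}$ and vertical arrows the truncations, and the square with horizontal arrows $V_s\colon\mathbb{W}^+_n\to\mathbb{W}^+_{sn}$ and $V_s\colon\mathbb{W}^+_{m/s}\to\mathbb{W}^+_m$ and vertical arrows the truncations. Each square commutes for a soft reason: the two composites around it are induced by the \emph{same} underlying correspondence ($\rho_s$, respectively ${}^t\rho_s$), because the truncation maps are induced by $\id_{\mathbb{P}^1_S}$, whose composition with a modulus correspondence leaves the underlying correspondence unchanged, so the two composites define the same morphism after passing to the colimit $\varinjlim_{\varepsilon>0}$. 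The case of $\widehat{\mathbb{W}}^+_n$ is handled identically, carrying along the extra summand $\varepsilon\{0\}$ of the divisor (using $\rho_s^*\{0\}=s\{0\}$) and omitting the quotient by $\mathbb{Z}$. The only genuine point — where one must not copy the $\mathbb{W}_n$-proof literally — is the passage through the non-integral modulus $(\mathbb{P}^1_S,(m/s+\varepsilon)\{\infty\})$, which is exactly where rational coefficients are used; everything else is bookkeeping with the parameters $\varepsilon$.
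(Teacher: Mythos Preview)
Your argument is correct, but it takes a different route from the paper. You mimic the classical proof for $\mathbb{W}_n$ by introducing the auxiliary object $\mathbb{W}^+_{m/s}$ with a genuinely rational parameter and factoring $V_sF_s$ as $\mathbb{W}^+_m\xrightarrow{F_s}\mathbb{W}^+_{m/s}\xrightarrow{V_s}\mathbb{W}^+_m$; compatibility with the original $V_sF_s$ on $\mathbb{W}^+_{sn}$ then follows because every map in sight is induced by the same underlying correspondence on $\mathbb{A}^1_S$. The paper instead avoids any factorization: it simply computes the composite ${}^t\rho_s\circ\rho_s$ as the cycle $\{x^s=y^s\}$ on $\mathbb{A}^2_S$, observes that this cycle is \emph{symmetric} in $x$ and $y$, and concludes that the admissibility inequality $\pr_1^*(r\{\infty\})\succ\pr_2^*(r\{\infty\})$ holds tautologically for every $r\in\mathbb{Q}_{>0}$, so the correspondence already lies in $c_S\bigl((\mathbb{P}^1_S,r\{\infty\}),(\mathbb{P}^1_S,r\{\infty\})\bigr)$. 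Your approach has the virtue of making explicit the analogy with the classical argument and showcasing the $\mathbb{Q}$-modulus framework; the paper's approach is shorter and sidesteps the need to define $F_s,V_s$ for non-integral parameters. (Minor remark: your clause ``As $m/s\le n$'' is a vestige of the classical proof and is not actually used in your construction.)
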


\begin{proof}
	To prove the first statement, it suffices to show that the finite correspondence ${}^t\rho_s\circ\rho_s\in \Cor_k(\mathbb{A}^1,\mathbb{A}^1)$ is contained in $\MCor^\mathbb{Q}_k((\mathbb{P}^1,r[\infty]),(\mathbb{P}^1,r[\infty]))$ for every $r\in \mathbb{Q}_{>0}$.
	This follows from the fact that ${}^t\rho_s\circ\rho_s$ is represented by the cycle on $\mathbb{A}^2$ defined by $x^s=y^s$, which is symmetric in $x$ and $y$.
	The second statement can be proved similarly.
\end{proof}

Suppose that $\ch(k)=p>0$.
For a prime number $\ell$ different from $p$, we have an endomorphism
$\ell^{-1}V_\ell F_\ell$ of $(\widehat{\mathbb{W}}^+_{n})_{(p)}$.
By Lemma \ref{lem:motivic_Witt_properties}, $\ell^{-1}V_\ell F_\ell$ is an idempotent and hence defines a direct summand $\Im(\ell^{-1}V_\ell F_\ell)$ of $(\widehat{\mathbb{W}}^+_{n})_{(p)}$.
For $n\geq 1$, we define
\begin{equation}\label{def:Wn+}
	W^+_n:=(\widehat{\mathbb{W}}^+_{p^{n-1}})_{(p)}/\textstyle\sum_{\ell\neq p}\Im (\ell^{-1} V_\ell F_\ell) \in \PSh(\MCor^\mathbb{Q}_k).
\end{equation}
For $X=\Spec A\in \Sm_k^\aff$ and $a\in A^\times$, we write $[a]$ for the image of $[a]\in h_0\widehat{\mathbb{W}}^+_{p^{n-1}}(X)$ in $h_0W^+_n(X)$.
By Lemma \ref{lem:motivic_Witt_properties}, the morphisms $U$, $\star$, $F_p$, and $V_p$ descend to
\begin{align*}
	U&\colon \mathbb{Z}_{(p)}\to W^+_n,&\star&\colon W^+_n\otimes W^+_n\to W^+_n,\\
	F&\colon W^+_{n+1}\to W^+_n,&V&\colon W^+_n\to W^+_{n+1}
\end{align*}
and we have the following

\begin{lemma}\label{lem:motivic_p_Witt_properties}
	The following assertions hold for $W^+_n$:
	\begin{enumerate}
		\item	$\star$ is commutative, associative, and unital with unit $U$.
		\item	$FV=p\cdot \id$.
		\item	$V\circ(\id\otimes F) = \star\circ (V\otimes \id)$.
		\item	$\star\circ(F\otimes F)=F\circ \star$.
	\end{enumerate}
\end{lemma}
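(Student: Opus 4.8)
The plan is to obtain (1)--(4) from Lemma~\ref{lem:motivic_Witt_properties} in exactly the same way that the classical Lemma~\ref{lem:p_Witt_properties} is deduced from Lemma~\ref{lem:Witt_properties}. Write $e_\ell:=\ell^{-1}V_\ell F_\ell$ for the idempotent endomorphism of $\widehat{\mathbb{W}}^+_{p^{n-1}}\otimes\mathbb{Z}_{(p)}$; this makes sense because $V_\ell F_\ell$ descends to all lengths $\le \ell k$ by the last lemma before \eqref{def:Wn+}. First I would verify that $U$, $\star$, $F_p$, $V_p$ genuinely descend to $W^+_n$. For $U$ this is trivial, as $U$ factors through the quotient map $q$. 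For $F_p$ and $V_p$, the relations $F_pF_\ell=F_\ell F_p$, $V_pV_\ell=V_\ell V_p$ and $F_pV_\ell=V_\ell F_p$ (the last one using $(p,\ell)=1$), all from Lemma~\ref{lem:motivic_Witt_properties}~(2),(3), give $F_p\circ e_\ell=e_\ell\circ F_p$ and $V_p\circ e_\ell=e_\ell\circ V_p$, so both preserve $\sum_{\ell\neq p}\Im(e_\ell)$. For $\star$, combining $V_\ell\circ\star\circ(\id\otimes F_\ell)=\star\circ(V_\ell\otimes\id)$, $\star\circ(F_\ell\otimes F_\ell)=F_\ell\circ\star$ and $F_\ell V_\ell=\ell\cdot\id$ (Lemma~\ref{lem:motivic_Witt_properties}~(3),(4),(5)) yields $\star\circ(e_\ell\otimes\id)=e_\ell\circ\star=\star\circ(\id\otimes e_\ell)$, whence $\star$ carries $\Im(e_\ell)\otimes(\widehat{\mathbb{W}}^+_{p^{n-1}}\otimes\mathbb{Z}_{(p)})$ and its transpose into $\Im(e_\ell)$, so $\star$ descends.

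Once this is done, (1)--(4) are purely formal. The quotient map $q\colon\widehat{\mathbb{W}}^+_{p^{n-1}}\otimes\mathbb{Z}_{(p)}\twoheadrightarrow W^+_n$ is an epimorphism, and since $\otimes$ is right exact in each variable, each $q^{\otimes k}$ is again an epimorphism onto $(W^+_n)^{\otimes k}$. As the operations $U,\star,F,V$ on $W^+_n$ are by construction the unique morphisms making the relevant squares with $q$ commute, every identity of Lemma~\ref{lem:motivic_Witt_properties} involving only $U,\star,F_p,V_p$ --- commutativity, associativity and unitality of $\star$; $F_pV_p=p\cdot\id$; $V_p\circ\star\circ(\id\otimes F_p)=\star\circ(V_p\otimes\id)$; $\star\circ(F_p\otimes F_p)=F_p\circ\star$ --- descends verbatim along $q$ and gives exactly (1)--(4), keeping track of the index bookkeeping $p\cdot p^{n-1}=p^n$ for $F\colon W^+_{n+1}\to W^+_n$ and $V\colon W^+_n\to W^+_{n+1}$.

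There is no serious obstacle here: the statement is a formal consequence of Lemma~\ref{lem:motivic_Witt_properties}. The only step demanding genuine attention is showing that $\sum_{\ell\neq p}\Im(e_\ell)$ is stable under $\star$, $F_p$ and $V_p$ --- i.e.\ that these operations are well-defined on the quotient $W^+_n$ --- which is precisely where the multiplicativity relations (3)--(5) of Lemma~\ref{lem:motivic_Witt_properties} and the compatibility of the length-truncation maps (the last lemma before \eqref{def:Wn+}) are used. Everything else is a diagram chase.
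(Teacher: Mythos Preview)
Your proposal is correct and matches the paper's approach: the paper gives no proof at all, merely stating the lemma after the sentence ``By Lemma~\ref{lem:motivic_Witt_properties}, the morphisms $U$, $\star$, $F_p$, and $V_p$ descend to\ldots'' --- so the entire argument is left implicit. Your write-up supplies exactly the details the paper omits (in particular the verification that $\sum_{\ell\neq p}\Im(e_\ell)$ is stable under $\star$, $F_p$, $V_p$), and the remaining diagram chase is, as you say, formal.
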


\subsection{Comparison}

\begin{theorem}\label{thm:ab-comparison}
	Let $n\geq 0$.
	\begin{enumerate}
		\item	There is an isomorphism
				$\varphi\colon h_0\mathbb{W}^+_n\xrightarrow{\sim} \mathbb{W}_n$
				in $\PSh(\Cor_k^\aff)$.
				For $X=\Spec A\in \Sm_k^\aff$, the image of $\div(x^m+a_1x^{m-1}+\dots+a_m) \in h_0\mathbb{W}^+_n(X)$ is $1+a_1t+\dots+a_mt^m \in \bW_{n}(X)$.
				In particular, we have $\varphi_X([a])=[a]$ for $a\in A$.
 		\item	There is an isomorphism
				$\widehat{\varphi}\colon h_0\widehat{\mathbb{W}}^+_n\xrightarrow{\sim} \widehat{\mathbb{W}}_n$
				in $\PSh(\Cor_k^\aff)$.
				For $X=\Spec A\in \Sm_k^\aff$, the image of $\div(x^m+a_1x^{m-1}+\dots+a_m)\in h_0\widehat{\mathbb{W}}^+_n(X)$ is $(1+a_1t+\dots+a_mt^m, (-1)^ma_m, m) \in \widehat{\bW}_{n} (X)$.
				In particular, we have $\widehat{\varphi}_X([a])=[a]$ for $a\in A^\times$.
	\end{enumerate}
\end{theorem}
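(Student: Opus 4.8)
The plan is to reduce both isomorphisms to a computation of the relative Chow group of zero-cycles on a modulus curve, and then to identify that Chow group with the Witt vectors essentially by hand. Since $h_0=\omega_!\Lcube$ preserves colimits, $h_0\mathbb{W}_n^+\simeq\varinjlim_{\varepsilon>0}\bigl(h_0(\mathbb{P}^1_S,(n+\varepsilon)\{\infty\})\bigr)/\mathbb{Z}$, the $\mathbb{Z}$ being split off by $i_0$ and $\pi$. For $0<\varepsilon\le 1$ one has $\lceil(n+\varepsilon)\{\infty\}\rceil=(n+1)\{\infty\}$, so by the motivic Hasse--Arf theorem (Theorem~\ref{geometric Hasse-Arf for curves}) every transition map in the cofinal subsystem indexed by small $\varepsilon$ is an isomorphism; hence $h_0\mathbb{W}_n^+\simeq\bigl(h_0(\mathbb{P}^1_S,(n+1)\{\infty\})\bigr)/\mathbb{Z}$ in $\PSh(\Cor_S)$, and similarly $h_0\widehat{\mathbb{W}}_n^+\simeq h_0(\mathbb{P}^1_S,\{0\}+(n+1)\{\infty\})$. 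Fixing $X=\Spec A\in\Sm_S^\aff$ and using additivity in $X$ we may take $X$ integral, so that $A$ is a regular (hence normal) domain with fraction field $K$; the pair $(\mathbb{P}^1_S,(n+1)\{\infty\})$ is a modulus curve with interior $\mathbb{A}^1_S$, so Theorem~\ref{modulus_curve_suslin_homology} identifies $h_0(\mathbb{P}^1_S,(n+1)\{\infty\})(X)$ with $\mathrm{CH}_0((\mathbb{P}^1_X,(n+1)\{\infty\})/X)=\mathrm{coker}\bigl(\Adm(\mathbb{P}^1_X,(n+1)\{\infty\})\xrightarrow{\div}c_0(\mathbb{A}^1_X/X)\bigr)$, and likewise for $\widehat{\mathbb{W}}_n^+$ with $\mathbb{G}_{m,X}$ in place of $\mathbb{A}^1_X$.

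Because $A$ is normal, an integral closed subscheme of $\mathbb{A}^1_X$ finite and pseudo-dominant over $X$ is precisely $V(q)$ for a monic irreducible $q\in A[x]$, and unique factorisation of monic polynomials over $K$ (coefficients descending to $A$ by normality) identifies the effective part of $c_0(\mathbb{A}^1_X/X)$ -- which by Lemma~\ref{lem:modulus_curve_Weil} is the free abelian group on such $V$'s -- with the monoid of monic polynomials in $A[x]$ via $g\mapsto[V(g)]$; the class $[0]$ that was quotiented out is $[V(x)]$. I will define $\varphi_X$ on generators by $[V(g)]\mapsto t^{\deg g}g(t^{-1})\bmod t^{n+1}\in\mathbb{W}_n(A)$ and a candidate inverse $\Psi$ by $1+\textstyle\sum_{i=1}^n b_it^i\mapsto\bigl[V(x^n+\sum_{i=1}^n b_ix^{n-i})\bigr]$. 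The verification rests on one dictionary: an admissible $f$ is necessarily a ratio $g_1/g_2$ of monic polynomials of a common degree $d$ (since $v_\infty(f-1)>0$ forces equal leading coefficients), and the condition $v_\infty(f-1)\ge n+1$ says exactly that $g_1$ and $g_2$ have equal coefficients in degrees $x^{d},\dots,x^{d-n}$, i.e.\ that their reciprocal polynomials agree modulo $t^{n+1}$. From this $\varphi_X$ kills $\div(\Adm)$ and $[0]$; and, writing $g''$ for the degree-$n$ monic lift of the product of two reciprocal truncations, the function $gg'/(x^ng'')$ is admissible and yields $[V(g)]+[V(g')]=[V(g'')]$ modulo $[0]$, so $\Psi$ is a homomorphism, while the analogous function $g/(x^{d-n}g^{\natural})$ reduces any $[V(g)]$ with $\deg g=d>n$ to $[V(g^{\natural})]$ modulo $[0]$ and $\div(\Adm)$; hence $\Psi\circ\varphi_X=\mathrm{id}$, and $\varphi_X\circ\Psi=\mathrm{id}$ is immediate. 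Naturality in $X$ is clear, and the stated formulas (including $[a]=[V(x-a)]\mapsto[a]$) are exactly $\varphi_X$ applied to $g=x^m+a_1x^{m-1}+\cdots+a_m$, resp.\ $g=x-a$.

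For $\widehat{\mathbb{W}}_n^+$ one repeats the argument with interior $\mathbb{G}_{m,X}=\Spec A[x,x^{-1}]$ and modulus $\{0\}+(n+1)\{\infty\}$: the effective cycles are the $[V(g)]$ with $g\in A[x]$ monic of degree $m$ and $g(0)\in A^\times$, an admissible $f=g_1/g_2$ now additionally satisfies $f(0)=1$, i.e.\ $g_1(0)=g_2(0)$, so the class of $[V(g)]$ remembers precisely the triple $\bigl(t^mg(t^{-1})\bmod t^{n+1},\,(-1)^mg(0),\,m\bigr)\in\mathbb{W}_n(A)\oplus A^\times\oplus\mathbb{Z}$, the sign being the one forced by additivity and by $[a]=[V(x-a)]\mapsto([a],a,1)$. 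I expect the main obstacle to be precisely this bookkeeping: one must combine the ``monic polynomials $=$ effective relative zero-cycles'' description (which uses normality of $A$) with the valuation dictionary above carefully enough to see that $\div(\Adm)$ is \emph{exactly} the subgroup collapsing $c_0(\mathbb{A}^1_X/X)/\langle[0]\rangle$ onto $\mathbb{W}_n(A)$ -- in particular the degree-reduction step underlying surjectivity of $\Psi$ (equivalently injectivity of $\varphi_X$).
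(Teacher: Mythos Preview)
Your sectionwise computation is correct and runs parallel to the paper's: both apply the motivic Hasse--Arf theorem to replace $(n+\varepsilon)\{\infty\}$ by $(n+1)\{\infty\}$, then invoke Theorem~\ref{modulus_curve_suslin_homology} to land in $\CH_0((\mathbb{P}^1_X,(n+1)\{\infty\})/X)$. The paper then passes through the relative Picard group (Lemma~\ref{relative_Picard_comparison}) and cites \cite[Proposition~1.1]{Koi2} for the identification $\Pic(\mathbb{P}^1_X,(n+1)\{\infty\})\simeq\mathbb{W}_n(X)\oplus\mathbb{Z}$; you instead unpack this identification by hand via the dictionary between monic polynomials in $A[x]$ and effective relative $0$-cycles. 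One small point: your claim that every admissible $f$ is a ratio $g_1/g_2$ of monic polynomials needs the coefficients to lie in $A$ (not just in $K$), which is true but uses normality of $A$ together with the fact that the components of $\div(f)$ are finite over $X$.

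There is, however, a genuine gap. The theorem asserts an isomorphism in $\PSh(\Cor_S^\aff)$, i.e.\ compatibility with \emph{finite correspondences}, not merely with scheme morphisms. Your $\varphi_X$ is built section by section, and ``naturality in $X$'' for pullback along morphisms is indeed clear from your formulas, but compatibility with an arbitrary $\alpha\in c_S(X,Y)$ is not: the transfer structure on the target $\mathbb{W}_n$ comes from its group-scheme structure (the norm construction of Appendix~\ref{sec:appendix}), and nothing in your argument links that to the rule $[V(g)]\mapsto t^{\deg g}g(t^{-1})$. The paper treats this as a separate step: for connected $X$ one uses the injection $\mathbb{W}_n(X)\hookrightarrow\mathbb{W}_n(K)$ with $K$ an algebraic closure of $k(X)$, reducing to $X=\Spec K$ with $K$ algebraically closed, where every finite correspondence is a $\mathbb{Z}$-linear combination of graphs of $K$-points and the compatibility is trivial. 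You must supply this reduction (or, alternatively, observe that $\varphi$ is induced by the section $1-xt\in\mathbb{W}_n(\mathbb{A}^1_S)$, hence is a priori a morphism $\mathbb{Z}_\tr(\mathbb{A}^1_S)\to\mathbb{W}_n$ in $\PSh(\Cor_S)$, and then only check that it factors through $h_0\mathbb{W}_n^+$).
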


\begin{proof}
	We prove only (1); the proof for (2) is similar.
	First we construct an isomorphism
	$$
		\varphi_X\colon h_0\mathbb{W}^+_n(X)\xrightarrow{\sim} \mathbb{W}_n(X)
	$$
	for $X=\Spec A\in \Sm_k^\aff$.
	We may assume that $X$ is connected.
	For $\varepsilon \in (0,1)\cap \mathbb{Q}$ we have
	\begin{align*}h_0(\mathbb{P}^1,(n+\varepsilon)[\infty])(X)
	&\simeq h_0(\mathbb{P}^1,(n+1)[\infty])(X)	&\text{(Theorem \ref{geometric Hasse-Arf for curves})}\\
	&\simeq \Pic(\mathbb{P}^1_X,(n+1)[\infty]).&\text{(Theorem \ref{modulus_curve_suslin_homology})}
	\end{align*}
	The last term can be easily identified with $\mathbb{W}_n(X)\oplus \mathbb{Z}$ (see e.g. \cite[Proposition 1.1]{Koi2}).
	Therefore we have $h_0\mathbb{W}^+_n(X) \simeq (\mathbb{W}_n(X)\oplus \mathbb{Z})/\mathbb{Z}\simeq \mathbb{W}_n(X)$.

	It remains to show that this isomorphism is compatible with transfers.
	Let $X,Y\in \Sm_k^\aff$ and $\alpha\in \Cor_k(X,Y)$.
	It suffices to prove that the following diagram is commutative:
	\begin{align}\label{diag_1}
		\xymatrix{
			h_0\mathbb{W}^+_n(Y)\ar[r]^-{\varphi_{Y}} \ar[d]^-{\alpha^*}	&\mathbb{W}_n(Y)\ar[d]^-{\alpha^*}\\
			h_0\mathbb{W}^+_n(X)\ar[r]^-{\varphi_{X}}						&\mathbb{W}_n(X).
		}
	\end{align}
	We may assume that $X,Y$ are connected.
	Let $K$ be an algebraic closure of $k(X)$, and consider the same problem for $X_K,Y_K\in \Sm_k^\aff$ and $\alpha_K\in \Cor_K(X_K,Y_K)$:
	\begin{align}\label{diag_2}
		\xymatrix{
			h_0\mathbb{W}^+_n(Y_K)\ar[r]^-{\varphi_{Y_K}} \ar[d]^-{\alpha_K^*}	&\mathbb{W}_n(Y_K)\ar[d]^-{\alpha_K^*}\\
			h_0\mathbb{W}^+_n(X_K)\ar[r]^-{\varphi_{X_K}}						&\mathbb{W}_n(X_K).
		}
	\end{align}
	Then there is a natural morphism of diagrams from (\ref{diag_1}) to (\ref{diag_2}).
	Since the composite
	$$
		\mathbb{W}_n(X) \to \mathbb{W}(X_K)\xrightarrow{\Delta_K^*} \mathbb{W}_n(K)
	$$
	is injective, we may assume that $k$ is algebraically closed and $X=\Spec k$.
	In this case, $\alpha$ can be written as a $\mathbb{Z}$-linear combination of morphisms $\Spec k\to Y$, so the assertion is obvious.
\end{proof}

\begin{proposition}\label{prop:ring-comparison}
	The isomorphisms $\varphi\colon h_0\mathbb{W}^+_n\xrightarrow{\sim} \mathbb{W}_n$ and $\widehat{\varphi} \colon h_0\widehat{\mathbb{W}}^+_n\xrightarrow{\sim} \widehat{\mathbb{W}}_n$ in $\PSh(\Cor_k^\aff)$ from Theorem \ref{thm:ab-comparison} are compatible with the unit, the multiplication, the Frobenius, and the Verschiebung.
\end{proposition}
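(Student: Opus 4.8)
The plan is to reduce each of the four compatibilities to an identity of explicit sections on a generating family, and then to verify those identities by one-line computations with $0$-cycles on $\mathbb{A}^1$, following closely the scheme of the proof of Theorem~\ref{thm:ab-comparison}. I describe the argument for $\varphi$; the case of $\widehat{\varphi}$ is formally identical, with the extra bookkeeping of the $\mathbb{G}_m$- and $\mathbb{Z}$-coordinates indicated at the end.

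First I would arrange the reduction to an identity of sections. For the unit there is nothing to do: $\varphi\circ h_0(U)$ and $U$ are both morphisms $\mathbb{Z}\to\mathbb{W}_n$, hence are determined by the image of $1$, and Theorem~\ref{thm:ab-comparison} gives $h_0(U)(1)=[1]$ and $\varphi([1])=[1]=U(1)$. For $F_s$ and $V_s$, which are morphisms of presheaves with transfers between two copies of the relevant sheaves, it suffices to check commutativity of the two squares at the level of abelian groups, i.e.\ $\varphi_X\circ h_0(F_s)=F_s\circ\varphi_X$ and similarly for $V_s$, for each $X\in\Sm_S^{\aff}$. For the multiplication $\star$, which is a morphism out of a tensor product, I would use that a morphism $A\otimes B\to C$ in $\PSh(\Cor_S^{\aff})$ is determined by the elements $u\otimes v\in(A\otimes B)(W)$ with $u\in A(W)$, $v\in B(W)$ (taking $W=X\times_S Y$, $u=\pr_1^*a$, $v=\pr_2^*b$ recovers the external product of $a\in A(X)$, $b\in B(Y)$); thus it is enough to verify $\varphi_W(u\star v)=\varphi_W(u)\star\varphi_W(v)$ in $\mathbb{W}_n(W)$ for all $W\in\Sm_S^{\aff}$ and all $u,v\in h_0\mathbb{W}_n^+(W)$.

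Next I would reduce to $W=\Spec K$ with $K$ algebraically closed. All the maps occurring in the identities above commute with the pro-smooth base change $h_0\mathbb{W}_n^+(W)\to h_0\mathbb{W}_n^+(\overline{k(\eta)})$ at the generic point $\eta$ of a connected $W$ (Lemma~\ref{base_change_and_h0}) and with the injection $\mathbb{W}_n(W)\hookrightarrow\mathbb{W}_n(\overline{k(\eta)})$, so exactly as in the proof of Theorem~\ref{thm:ab-comparison} one reduces to $K=\overline{k(\eta)}$. Over such a $K$ the group $h_0\mathbb{W}_n^+(K)\simeq\mathbb{W}_n(K)$ is generated by the Teichm\"uller elements $[a]$ ($a\in K$), since every element of $\mathbb{W}_n(K)=1+tK[t]/(t^{n+1})$ is a product of linear factors $1-\alpha t$, and a closed point of $\mathbb{A}^1_K$ has the form $\{x=\alpha\}$. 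As all operations involved are additive, and $\star$ biadditive, it remains to check each identity on elements $[a]$, resp.\ $[a]\otimes[b]$. On these, everything follows from $\varphi_K([a])=[a]$ together with the defining properties of the operations on $\mathbb{W}_n$ recalled in \S\ref{sec:Witt}: the motivic $[a]\star[b]$ is the class of $\mu\circ(a,b)=ab$, and $\varphi([ab])=1-abt=(1-at)\star(1-bt)$; the motivic $F_s[a]$ is the class of $\rho_s\circ a=a^s$, and $\varphi([a^s])=1-a^s t=F_s(1-at)$; the motivic $V_s[a]$ is the class of the pullback of the graph of $\rho_s$ along $a$ (Lemma~\ref{lem:composition_formula}, Lemma~\ref{lem:pullback_lemma}), which over $K$ is the $0$-cycle $\div(x^s-a)=\sum_{\alpha^s=a}[\alpha]$, mapping under $\varphi$ to $1-at^s=V_s(1-at)$ by Theorem~\ref{thm:ab-comparison}. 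For $\widehat{\varphi}$ one repeats the same four computations while tracking the $\mathbb{Z}$-coordinate (the degree of the defining monic polynomial, i.e.\ the length of the $0$-cycle) and the $\mathbb{G}_m$-coordinate (the quantity $(-1)^m a_m$, i.e.\ the product of the roots), checking that these transform under $\mu$, $\rho_s$ and ${}^t\rho_s$ as prescribed by the formulas defining $U,\star,F_s,V_s$ on $\widehat{\mathbb{W}}_n$.

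I expect the only genuinely non-formal point to be the passage from equality of morphisms of (modulus) presheaves with transfers to equality of sections on the generators $[a]$ — in particular the reduction for $\star$, which requires first the description of maps out of a tensor product and then the injectivity and pro-smooth base-change machinery to descend to algebraically closed fields. Once that scaffolding is in place, the remaining verifications are immediate calculations with $0$-cycles on the affine line.
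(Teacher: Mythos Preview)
Your proposal is correct and follows essentially the same route as the paper's proof: reduce to sections over $X=\Spec K$ with $K$ algebraically closed (via pro-smooth base change and injectivity into $\mathbb{W}_n(K)$), use that $\mathbb{W}_n(K)$ is generated by Teichm\"uller elements $[a]$, and check the four identities $U(1)=[1]$, $[a]\star[b]=[ab]$, $F_s[a]=[a^s]$, $V_s[a]=\div(x^s-a)\mapsto 1-at^s$ on both sides. Your write-up is slightly more explicit than the paper's about the reduction for $\star$ (maps out of a tensor product being determined by elements $u\otimes v$), but the substance is identical.
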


\begin{proof}
	We prove only the statement for $\varphi$; the proof for $\widehat{\varphi}$ is similar.
	It suffices to prove that for each $X\in \Sm_k^\aff$, the isomorphism $\varphi_X\colon h_0\mathbb{W}^+_n(X)\xrightarrow{\sim} \mathbb{W}_n(X)$ is compatible with these operations.
	As in the proof of Theorem \ref{thm:ab-comparison}, we may assume that $k$ is algebraically closed and $X=\Spec k$.
	In this case, $\mathbb{W}_n(k)$ is generated by elements of the form $[a]$ with $a\in k^\times$.
	For $a,b\in k^\times$, we have 
	$$
		U(1)=[1],\quad [a]\star[b]=[ab],\quad F_s([a])=[a^s],\quad V_s([a])=1-at^s
	$$ in $\mathbb{W}_n(k)$ and
	$$
		U(1)=[1],\quad [a]\star[b]=[ab],\quad F_s([a])=[a^s],\quad V_s ([a]) = \div(x^s-a)
	$$
	in $h_0\mathbb{W}^+_n(k)$.
	This proves the claim.
\end{proof}

\begin{corollary}\label{cor:p_typical_comparison}
	Suppose that $\ch(k)=p>0$.
	For $n\geq 1$, there is an isomorphism
	$$
		\varphi^{(p)}\colon h_0 W^+_n\xrightarrow{\sim} W_n
	$$
	in $\PSh(\Cor_k^\aff)$ which is compatible with the unit, the multiplication, the Frobenius, and the Verschiebung.
	If $X=\Spec A\in \Sm_k^\aff$, then we have $\varphi^{(p)}_X([a])=[a]$ for $a\in A^\times$.
\end{corollary}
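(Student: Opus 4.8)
The plan is to obtain $\varphi^{(p)}$ by applying $h_0$ to the defining quotient presentations of $W^+_n$ and $W_n$ and then invoking the big Witt vector comparison. Recall from \eqref{def:Wn+} that $W^+_n=(\widehat{\mathbb{W}}^+_{p^{n-1}}\otimes\mathbb{Z}_{(p)})/\sum_{\ell\neq p}\Im(\ell^{-1}V_\ell F_\ell)$, and that $W_n$ is defined by the same formula with $\widehat{\mathbb{W}}_{p^{n-1}}$ in place of $\widehat{\mathbb{W}}^+_{p^{n-1}}$. The functor $h_0=\omega_!\Lcube$ preserves all colimits: $\Lcube$ is the cokernel of a natural transformation between the evaluation functors $F\mapsto F(\mathcal{X}\boxtimes\bcube)$ and $F\mapsto F(\mathcal{X})$, which both preserve colimits, and $\omega_!$ is a left adjoint. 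Hence $h_0$ is additive and right exact, it commutes with $(-)\otimes\mathbb{Z}_{(p)}$ (a filtered colimit of multiplication maps by integers prime to $p$), and it carries the image of an idempotent endomorphism to the image of the induced idempotent. Since the operators $\ell^{-1}V_\ell F_\ell$ pairwise commute (Lemma~\ref{lem:motivic_Witt_properties}) and are idempotent, applying $h_0$ to \eqref{def:Wn+} yields a canonical identification $h_0 W^+_n\simeq (h_0\widehat{\mathbb{W}}^+_{p^{n-1}}\otimes\mathbb{Z}_{(p)})\big/\sum_{\ell\neq p}\Im(\ell^{-1}V_\ell F_\ell)$, and likewise $W_n=(\widehat{\mathbb{W}}_{p^{n-1}}\otimes\mathbb{Z}_{(p)})\big/\sum_{\ell\neq p}\Im(\ell^{-1}V_\ell F_\ell)$.

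Next I would invoke Theorem~\ref{thm:ab-comparison}(2), which provides an isomorphism $\widehat{\varphi}\colon h_0\widehat{\mathbb{W}}^+_{p^{n-1}}\xrightarrow{\sim}\widehat{\mathbb{W}}_{p^{n-1}}$ in $\PSh(\Cor_S^\aff)$, together with Proposition~\ref{prop:ring-comparison}, which states that $\widehat{\varphi}$ is compatible with $F_s$ and $V_s$. Consequently $\widehat{\varphi}\otimes\mathbb{Z}_{(p)}$ is compatible with each idempotent $\ell^{-1}V_\ell F_\ell$, hence it carries the direct-summand decomposition of $h_0\widehat{\mathbb{W}}^+_{p^{n-1}}\otimes\mathbb{Z}_{(p)}$ isomorphically onto that of $\widehat{\mathbb{W}}_{p^{n-1}}\otimes\mathbb{Z}_{(p)}$, and therefore descends to an isomorphism on the quotients, which we take as the definition of $\varphi^{(p)}\colon h_0 W^+_n\xrightarrow{\sim} W_n$.

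Compatibility of $\varphi^{(p)}$ with the unit, the multiplication, the Frobenius and the Verschiebung is then formal: on both $W^+_n$ and $W_n$ these structures are, by construction, induced by $U$, $\star$, $F_p$, $V_p$ on the level of $\widehat{\mathbb{W}}^+_{p^{n-1}}\otimes\mathbb{Z}_{(p)}$ resp.\ $\widehat{\mathbb{W}}_{p^{n-1}}\otimes\mathbb{Z}_{(p)}$ via the quotient maps, which are epimorphisms respecting these operations (and, for the multiplication, compatible with the lax symmetric monoidal structure of $h_0$); since $\widehat{\varphi}$ already intertwines $U$, $\star$, $F_s$, $V_s$ by Proposition~\ref{prop:ring-comparison}, so does $\varphi^{(p)}$ after descent, using surjectivity of the quotient maps and right-exactness of $\otimes$. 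For the last assertion, if $X=\Spec A\in\Sm_S^\aff$ and $a\in A^\times$, then $[a]\in h_0 W^+_n(X)$ is by definition the image of $[a]\in h_0\widehat{\mathbb{W}}^+_{p^{n-1}}(X)$ and $[a]\in W_n(X)$ is the image of $[a]\in\widehat{\mathbb{W}}_{p^{n-1}}(X)$; since $\widehat{\varphi}_X([a])=[a]$ by Theorem~\ref{thm:ab-comparison}(2), we conclude $\varphi^{(p)}_X([a])=[a]$.

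The argument is entirely formal once Theorem~\ref{thm:ab-comparison} and Proposition~\ref{prop:ring-comparison} are in hand, so I do not expect a substantial obstacle; the one point that demands care is the bookkeeping of the first two paragraphs — verifying that $h_0$ genuinely commutes with the quotient defining $W^+_n$ (which reduces to its colimit-preservation and right-exactness) and that the idempotent decomposition of $\widehat{\mathbb{W}}^+_{p^{n-1}}\otimes\mathbb{Z}_{(p)}$ is transported isomorphically by $\widehat{\varphi}$ onto that of $\widehat{\mathbb{W}}_{p^{n-1}}\otimes\mathbb{Z}_{(p)}$, so that the descent of $\widehat{\varphi}\otimes\mathbb{Z}_{(p)}$ is well defined and bijective.
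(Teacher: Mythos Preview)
Your argument is correct and is exactly the formal deduction the paper intends: the corollary is stated without proof immediately after Theorem~\ref{thm:ab-comparison} and Proposition~\ref{prop:ring-comparison}, and your write-up simply unpacks why it follows from those two results (colimit-preservation of $h_0$, transport of the idempotents $\ell^{-1}V_\ell F_\ell$ along $\widehat{\varphi}$, and descent of the operations).
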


\subsection{Application to torsion and divisibility of reciprocity sheaves}

In this subsection, we give an application of the motivic presentation of the ring of big Witt vectors to reciprocity sheaves.

Let $F\in \RSC_k$.
Imitating the construction in \cite{Miyazaki-19}, we define $N_nF,NF\in \PSh(\Cor_k)$ by
\begin{align*}
	N_nF(X)&:=\varprojlim_{\varepsilon>0}\Ker(h^0F((X,\emptyset)\otimes(\mathbb{P}^1,(n+\varepsilon)[\infty]))\xrightarrow{i_0^*}F(X)),\\
	NF(X)&:=\Ker(F(X\times\mathbb{A}^1)\xrightarrow{i_0^*}F(X)).
\end{align*}
Here, we regard $(X,\emptyset)$ as a pro-object in $\MCor_k^\mathbb{Q}$ via the compactification functor.
In other words, we define $N_nF:=h_0\intHom(\mathbb{W}^+_n,h^0F)$ and $NF:=\varinjlim_{n\geq 0}N_nF$.
The presheaf $NF$ measures how far $F$ is from being $\mathbb{A}^1$-invariant.
The composition
$$
	\mathbb{W}^+_n\otimes \mathbb{W}^+_n\otimes \intHom(\mathbb{W}^+_n,h^0F)\xrightarrow{\star\otimes \id}
	\mathbb{W}^+_n\otimes \intHom(\mathbb{W}^+_n,h^0F)\xrightarrow{\mathrm{ev}}
	h^0F
$$
induces an action
$$
	\mathbb{W}^+_n\otimes \intHom(\mathbb{W}^+_n,h^0F) \to \intHom(\mathbb{W}^+_n,h^0F)
$$
by adjunction.
Taking $h_0$, we obtain an action
$h_0\mathbb{W}^+_n\otimes N_nF\to N_nF$.
Since we have an isomorphism of rings $h_0\mathbb{W}^+_n(X)\simeq \mathbb{W}_n(X)$ for $X\in \Sm_k^\aff$ by Theorem \ref{thm:ab-comparison} and Proposition \ref{prop:ring-comparison}, we obtain the following

\begin{theorem}
	Let $F\in \RSC_k$ and $X\in \Sm_k^\aff$.
	Then $N_nF(X)$ has a canonical structure of a $\mathbb{W}_n(X)$-module, which is natural in $X$ and $n$.
	In particular, $NF(X)$ has a canonical structure of a $\mathbb{W}(X)$-module, where $\mathbb{W}(X)=\varprojlim_{n\geq 0}\mathbb{W}_n(X)$.
\end{theorem}

\begin{corollary}
	Let $F\in \RSC_k$, $X\in \Sm_k^\aff$, and let $p$ be a prime number.
	\begin{enumerate}
		\item	If $p$ is invertible in $k$, then $N_nF(X)$ and $NF(X)$ are uniquely $p$-divisible.
		\item	If $\ch(k)=p>0$, then $N_nF(X)$ and $NF(X)$ are $p$-groups.
	\end{enumerate}
\end{corollary}

\begin{proof}
	This follows from the fact that if $p$ is invertible (resp. nilpotent) in a ring $A$, then $p$ is also invertible (resp. nilpotent) in $\mathbb{W}_n(A)$; see \cite[Lemma 1.9 and Proposition 1.10]{Hes15}.
\end{proof}

\begin{corollary}[\protect{\cite[Theorem 1.3]{BCKS}}]\label{BCKSgen}
	Let $F\in \RSC_k$ and assume that $F$ is separated for the Zariski topology.
	\begin{enumerate}
		\item	If $\ch(k)=0$ and $F\otimes\mathbb{Q}=0$, then $F$ is $\mathbb{A}^1$-invariant.
		\item	If $\ch(k)=p>0$ and $F$ is $p$-torsion-free, then $F$ is $\mathbb{A}^1$-invariant.
	\end{enumerate}
\end{corollary}

\section{Motivic construction of the de Rham-Witt complex}
\label{sec:dRWitt}

Throughout this section, we assume that $k$ is \emph{perfect} and $\ch(k)=p\geq 3$
\footnote{
Here, the perfectness of $k$ is assumed in order to use the transfer structure on the de Rham-Witt complex.
The assumption on $\ch(k)$ will be used in the proofs of Lemma \ref{FdV} and Theorem \ref{thm:theta}.
}.
In this section, we construct the de Rham-Witt complex of smooth $k$-schemes using $\mathbb{Q}$-modulus pairs.

\subsection{De Rham-Witt complex}

First we recall the definition of the de Rham-Witt complex.
Here, we follow the axiomatization due to \cite{Hesselholt-Madsen}.

\begin{definition}
	Let $A$ be a $\mathbb{Z}_{(p)}$-algebra.
	A \emph{Witt complex} over $A$ is a tuple $(E_\bullet^*,F,V, \lambda)$ where
	\begin{enumerate}
		\item	$\cdots \to E_n^*\xrightarrow{R} E_{n-1}^*\to \cdots\to E_0^*$ is a sequence of CDGAs,
		\item	$F\colon E_{n+1}^*\to E_n^*$ is a graded ring homomorphism compatible with $R$,
		\item	$V\colon E_n^*\to E_{n+1}^*$ is a graded group homomorphism compatible with $R$,
		\item	$\lambda\colon W_n(A)\to E_n^0$ is a ring homomorphism compatible with $R$, $F$, and $V$,
	\end{enumerate}
	such that the following relations hold:
	$$
		V(x\star F(y))=V(x)\star y,\quad FdV=d,\quad FV=p\cdot\id, \quad F(d\lambda[a])=\lambda[a^{p-1}]\star d\lambda[a]\;(a\in A).
	$$
\end{definition}

The category of Witt complexes over $A$ is known to have an initial object $W_n\Omega^*(A)$ and it is called the \emph{de Rham-Witt complex} of $A$.
We have $W_n\Omega^0(A)\simeq W_n(A)$.
It follows from the construction that $W_n\Omega^q(A)$ is a quotient of $\Omega^q_{W_n(A)}$.

The presheaf $\Spec A\mapsto W_n\Omega^q(A)$ on $\Sm_k^\aff$ extends to an \'etale sheaf $W_n\Omega^q$ on $\Sm_k$ having {\it global injectivity}
\footnote{
This means that for any $X\in \Sm_k$ and a dense open subset $U\subset X$, the map $W_n\Omega^q(X)\to W_n\Omega^q(U)$ is injective.
This follows from \cite[I, Corollaire 3.9]{Illusie_dRW}.
}.
Moreover, since $k$ is assumed to be perfect, $W_n\Omega^q$ can be regarded as an object of $\PSh(\Cor_k)$ \cite[Theorem B.2.1]{KSY1}.
It is compatible with the trace maps defined in \cite[Theorem 2.6]{Rulling-thesis} (see \cite[Section 7.9]{RS21}).

\subsection{Motivic construction}

\begin{definition}\label{def:Gm+}
	We define $\mathbb{G}_m^+\in \PSh(\MCor^\mathbb{Q}_k)$ by
	$$
		\mathbb{G}_m^+:=\varinjlim_{\varepsilon>0}\mathbb{Z}_\tr(\mathbb{P}^1, \varepsilon[0]+\varepsilon[\infty])/\mathbb{Z}
	$$
	where $\mathbb{Z}$ is viewed as a direct summand of $\mathbb{Z}_\tr(\mathbb{P}^1,\varepsilon[0]+\varepsilon[\infty])$ via $i_1\colon \Spec k\to \mathbb{A}^1\setminus\{0\}$.
\end{definition}

\begin{lemma}\label{Gm_anti_commutative}
	The following assertions hold.
	\begin{enumerate}
	\item	Let $F_s\colon \mathbb{G}_m^+\to \mathbb{G}_m^+$ be the morphism induced by $\rho_s$.
			Then we have $h_0(F_s)=s\cdot \id$.
	\item	Let $V_s\colon \mathbb{G}_m^+\to \mathbb{G}_m^+$ be the morphism induced by ${}^t\rho_s$.
			If $s$ is odd, then $h_0(V_s)=\id$.
	\item	For $X=\Spec A \in \Sm_k^\aff$ and $a\in A^\times$, we write $[a]$ for the element of $h_0(\mathbb{G}_m^+)(X)$ represented by the morphism $a\colon X\to \mathbb{A}^1\setminus\{0\}$.
			 Then we have $[a]+[b]=[ab]$.
	\item	Let $\tau\colon \mathbb{G}_m^{+\otimes 2}\to \mathbb{G}_m^{+\otimes 2}$ be the morphism induced by
			$
			(\mathbb{A}^1\setminus\{0\})^2\to (\mathbb{A}^1\setminus\{0\})^2;
				\; (x,y)\mapsto (y,x).
			$
			Then we have $h_0(\tau)=-\id$.
	\item	Let $\delta\colon \mathbb{G}_m^+\to \mathbb{G}_m^{+\otimes 2}$ be the morphism induced by
			$
			\mathbb{A}^1\setminus\{0\}\to (\mathbb{A}^1\setminus\{0\})^2;
				\; x\mapsto (x,x).
			$
			Then we have $2\cdot h_0(\delta)=0$.
	\end{enumerate}
\end{lemma}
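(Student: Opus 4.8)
The plan is to prove the group law (3) first, by an explicit computation with relative $0$-cycles, and then to deduce (1), (2), (4), (5) formally from (3) together with properties of the functor $h_0$ and of the (lax) monoidal structure. For (3), I would fix a connected $X=\Spec A\in\Sm_S^\aff$ and units $a,b\in A^\times$ and use the rational function $f=(x-a)(x-b)/((x-ab)(x-1))$ on $\mathbb{P}^1_X$. Since $f(0)=f(\infty)=1$ and $f$ is regular and invertible near $\{0\}$ and $\{\infty\}$ (all its zeros and poles lie in $\mathbb{G}_{m,X}$), it is admissible with respect to $\{0\}+\{\infty\}$ in the sense of Definition \ref{def:admissible_functions}, and its divisor on $\mathbb{G}_{m,X}$ is $[a]+[b]-[ab]-[1]$ (classes of sections). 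Combining Theorem \ref{geometric Hasse-Arf for curves} (to replace the modulus $\varepsilon(\{0\}+\{\infty\})$, $\varepsilon\in(0,1)\cap\mathbb{Q}$, by $\{0\}+\{\infty\}$) with Theorem \ref{modulus_curve_suslin_homology} identifies $h_0(\mathbb{P}^1_S,\varepsilon(\{0\}+\{\infty\}))(X)$ with $\CH_0((\mathbb{P}^1_X,\{0\}+\{\infty\})/X)=c_0(\mathbb{G}_{m,X}/X)/\div(\Adm)$, so $\div(f)=0$ there; passing to the colimit over $\varepsilon$ and dividing by the copy of $\mathbb{Z}$ spanned by $[1]$ (the class of $i_1$) yields $[a]+[b]=[ab]$ in $h_0(\mathbb{G}_m^+)(X)$.

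Given (3), parts (1) and (2) become computations on the generating classes $[a]$. By the very definitions, $F_s([a])=[a^s]$, which equals $s[a]$ by iterating (3), so $h_0(F_s)=s\cdot\id$ on these classes; and $V_s([a])=\div(y^s-a)$ on $\mathbb{G}_{m,X}$, which over $\overline{k(X)}$ equals $\sum_{\zeta^s=1}[\zeta a^{1/s}]=[(-1)^{s-1}a]$ by (3), hence $[a]$ when $s$ is odd. To propagate these identities between morphisms of presheaves with transfers to all sections I would invoke the comparison isomorphism $h_0\mathbb{G}_m^+\xrightarrow{\sim}\mathbb{G}_m$ (obtained exactly as in Theorem \ref{thm:ab-comparison}, from Theorems \ref{geometric Hasse-Arf for curves}, \ref{modulus_curve_suslin_homology} and Lemma \ref{relative_Picard_comparison}), together with the global injectivity of $\mathbb{G}_m$ (Theorem \ref{thm:grp_sch_reciprocity}); the latter legitimizes both the reduction to generators and, in (2), the reduction to an algebraically closed field.

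For (4), the crux is the identity $h_0(\delta)([a])=[a]\otimes[a]$ in $h_0(\mathbb{G}_m^{+\otimes 2})(X)$ for every unit $a$ — a formal matching of the abstractly defined class $[a]\otimes[a]$ (built from the diagonal $\Delta_X$) with the class of the geometric diagonal $\mathbb{A}^1_S\setminus\{0\}\to(\mathbb{A}^1_S\setminus\{0\})^2$ inducing $\delta$, using the compatibility of the lax monoidal structure of $h_0$ (Lemma \ref{lem:h0_tensor}) with representables. Granting this, and using that $h_0(\delta)$ is additive while $(a_1,a_2)\mapsto a_1\otimes a_2$ is biadditive on sections, one computes for units $a,b$, writing $[ab]=[a]+[b]$ by (3):
\[
[a]\otimes[a]+[a]\otimes[b]+[b]\otimes[a]+[b]\otimes[b]=[ab]\otimes[ab]=h_0(\delta)([a])+h_0(\delta)([b])=[a]\otimes[a]+[b]\otimes[b],
\]
hence $[a]\otimes[b]+[b]\otimes[a]=0$. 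Since $h_0(\tau)$ is the transposition of the tensor factors, $h_0(\tau)([a]\otimes[b])=[b]\otimes[a]=-[a]\otimes[b]$, and such elements (together with transfers, i.e.\ after reduction to algebraically closed fields) generate $h_0(\mathbb{G}_m^{+\otimes 2})$; thus $h_0(\tau)=-\id$. Part (5) is then immediate: the diagonal is symmetric, so $\tau\circ\delta=\delta$, whence $h_0(\delta)=h_0(\tau)\circ h_0(\delta)=-h_0(\delta)$ and $2\,h_0(\delta)=0$.

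The main obstacle is establishing the identity $h_0(\delta)([a])=[a]\otimes[a]$ in (4) cleanly: this requires tracking $[a]\otimes[a]$ through the identifications $\omega_!\mathbb{G}_m^+=\mathbb{Z}_\tr(\mathbb{A}^1_S\setminus\{0\})/\mathbb{Z}$, the surjection $\mathbb{Z}_\tr(\mathbb{A}^1_S\setminus\{0\})\twoheadrightarrow h_0\mathbb{G}_m^+$, and the lax monoidal comparison map $h_0\mathbb{G}_m^+\otimes h_0\mathbb{G}_m^+\to h_0(\mathbb{G}_m^{+\otimes 2})$, checking at each stage that the abstract diagonal agrees with the geometric one. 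An alternative that sidesteps this is to prove the Steinberg-type relation $[u]\otimes[1-u]=0$ (for $u$ and $1-u$ units) directly, by an explicit cube-homotopy as in the classical computation, and then to deduce $[u]\otimes[-u]=0$ and the anticommutativity from it; the homotopy bookkeeping is of comparable difficulty in the two approaches.
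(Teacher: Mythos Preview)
Your approach works but differs substantially from the paper's. For (3), your admissible rational function $f=(x-a)(x-b)/((x-ab)(x-1))$ and the paper's one-parameter family $(1-t)(x-a)(x-b)+t(x-ab)(x-1)=0$ encode the same relation (one via the Chow-group description of Theorem~\ref{modulus_curve_suslin_homology}, the other as an explicit cube-homotopy), so here the two arguments are equivalent. The divergence is in (1), (2), (4): the paper writes down a separate explicit cube-homotopy for each---for instance, (4) uses the family $z+w=(1-t)(x+y)+t(xy+1)$, $zw=xy$ to interpolate between $\id+\tau$ and the zero map---whereas you reduce everything formally to the group law (3). Your reductions are valid: for (1) it suffices to check $F_s([x])=[x^s]=s[x]$ on the universal class, needing only (3); for (2) one computes $V_s([x])$ via the comparison $h_0\mathbb{G}_m^+\simeq\mathbb{G}_m$ (the $n=0$ case of Theorem~\ref{thm:ab-comparison}(2), whose proof is independent of this lemma, so there is no circularity); for (4) the relation $[x]\otimes[y]+[y]\otimes[x]=0$ on the universal element over $T=(\mathbb{A}^1_S\setminus\{0\})^2$ suffices since $h_0(\mathbb{G}_m^{+\otimes 2})$ is a quotient of $\mathbb{Z}_\tr(T)$, and your bilinearity computation establishes exactly this (the identification $h_0(\delta)([c])=[c]\otimes[c]$ you flag as the obstacle is a routine chase: both sides are represented by the map $(c,c)\colon X\to T$). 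Part (5) is handled identically in both proofs. The trade-off: your argument is conceptually economical once (3) is in hand, but leans on a comparison proved later in the paper and on tracking the lax monoidal structure; the paper's explicit homotopies are entirely self-contained and make the finiteness and admissibility verifications elementary.
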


\begin{proof}
	\noindent
	(1) Define $\Gamma\subset (\mathbb{A}^1\setminus\{0\})\times\mathbb{A}^1\times(\mathbb{A}^1\setminus\{0\})=\Spec k[x^\pm,t,y^\pm]$ by the equation
	$$
		(1-t)(y-x^s)(y-1)^{s-1}+t(y-x)^s=0.
	$$
	Regarding the left hand side as a polynomial in $y$, the leading term and the constant term have invertible coefficients.
	Therefore $\Gamma$ is finite locally free over $(\mathbb{A}^1\setminus\{0\})\times\mathbb{A}^1$ and hence defines a finite correspondence $[\Gamma]\in \Cor_k((\mathbb{A}^1\setminus\{0\})\times\mathbb{A}^1, \mathbb{A}^1\setminus\{0\})$.
	The closure $\overline{\Gamma}\subset \mathbb{P}^1\times\mathbb{P}^1\times\mathbb{P}^1$ of $\Gamma$ satisfies the following condition for $\varepsilon\in \mathbb{Q}_{>0}$:
	$$
		\pr_1^*(s\varepsilon[0]+s\varepsilon[\infty])|_{\overline{\Gamma}^N}+\pr_2^*[\infty]|_{\overline{\Gamma}^N}\geq \pr_3^*(\varepsilon[0]+\varepsilon[\infty])|_{\overline{\Gamma}^N}.
	$$
	Therefore $\Gamma$ induces a morphism $\mathbb{G}_m^+\otimes\mathbb{Z}_\tr(\bcube)\to \mathbb{G}_m^+$ whose restriction to $t=0$ (resp. $t=1$) is $F_s$ (resp. $s\cdot \id$).
	This shows that $h_0(F_s)=s\cdot \id$.
	
	\noindent
	(2) Define $\Gamma\subset (\mathbb{A}^1\setminus\{0\})\times\mathbb{A}^1\times(\mathbb{A}^1\setminus\{0\})=\Spec k[x^\pm,t,y^\pm]$ by the equation
	$$
		(1-t)(y^s-x)+t(y-x)(y-1)^{s-1}=0.
	$$
	Regarding the left hand side as a polynomial in $y$, the leading term and the constant term have invertible coefficients \emph{if $s$ is odd}.
	Therefore $\Gamma$ is finite locally free over $(\mathbb{A}^1\setminus\{0\})\times \mathbb{A}^1$ and hence defines a finite correspondence $[\Gamma]\in \Cor_k((\mathbb{A}^1\setminus\{0\})\times\mathbb{A}^1, \mathbb{A}^1\setminus\{0\})$.
	As in the proof of (1), induces a morphism $\mathbb{G}_m^+\otimes\mathbb{Z}_\tr(\bcube)\to \mathbb{G}_m^+$ whose restriction to $t=0$ (resp. $t=1$) is $V_s$ (resp. $\id$).
	This shows that $h_0(V_s)=\id$.
	
	\noindent
	(3) Define $\Gamma\subset X\times\mathbb{A}^1\times(\mathbb{A}^1\setminus\{0\})=\Spec A[t,x^\pm]$ by the equation
 	$$
		(1-t)(x-a)(x-b)+t(x-ab)(x-1)=0.
	$$
	Regarding the left hand side as a polynomial in $x$, the leading term and the constant term have invertible coefficients.
	Therefore $\Gamma$ is finite locally free over $\Spec A[t]$ and hence defines a finite correspondence $[\Gamma]\in \Cor_k(X\times\mathbb{A}^1, \mathbb{A}^1\setminus\{0\})$.
	This induces a morphism $\mathbb{Z}_\tr(X)\otimes \mathbb{Z}_\tr(\bcube)\to \mathbb{G}_m^+$ whose restriction to $t=0$ (resp. $t=1$) is $[a]+[b]$ (resp. $[ab]$).
	This shows that $[a]+[b]=[ab]$.
	
	\noindent
	(4) Define $\Gamma\subset (\mathbb{A}^1\setminus\{0\})^2\times\mathbb{A}^1\times(\mathbb{A}^1\setminus\{0\})^2=\Spec k[x^\pm,y^\pm,t,z^\pm,w^\pm]$ by the equations
	$$
		z+w=(1-t)(x+y)+t(xy+1),\quad zw=xy.
	$$
	One can check that $\Gamma$ is finite locally free over $(\mathbb{A}^1\setminus\{0\})^2\times\mathbb{A}^1$ and hence defines a finite correspondence $[\Gamma]\in \Cor_k((\mathbb{A}^1\setminus\{0\})^2\times\mathbb{A}^1, (\mathbb{A}^1\setminus\{0\})^2)$.
	This induces a morphism $\mathbb{G}_m^{+\otimes 2}\otimes \mathbb{Z}_\tr(\bcube)\to \mathbb{G}_m^{+\otimes 2}$ whose restriction to $t=0$ (resp. $t=1$) is $\id+\tau$ (resp. $0$).
	This shows that $h_0(\id+\tau)=0$ and hence $h_0(\tau)=-\id$.
	
	\noindent
	(5) We have $\tau\circ \delta=\delta$.
	Since $h_0(\tau)=-\id$ by (4), we get $-h_0(\delta)=h_0(\delta)$ and hence $2\cdot h_0(\delta)=0$.
\end{proof}

\begin{remark}
	Actually, we can prove that $h_0(\mathbb{G}_m^{+\otimes q})\cong K_q^M$ holds in $\PSh(\Cor_k)$, where $K_q^M$ is the unramified sheaf of Milnor $K$-groups; it is essentially a corollary of \cite[Theorem 3.4]{SV00}.
\end{remark}

\begin{definition}
Let $n\geq 1$.
\begin{enumerate}
	\item
	For $q,r\geq 0$, we define the \emph{multiplication} on $W_n^+\otimes \mathbb{G}_m^{+\otimes *}$ by
	\begin{align*}
	\star\colon (W^+_n\otimes\mathbb{G}_m^{+\otimes q})
	\otimes(W^+_n\otimes\mathbb{G}_m^{+\otimes r})
	&\xrightarrow{\sim}
	W^+_n\otimes W^+_n
	\otimes\mathbb{G}_m^{+\otimes q}\otimes\mathbb{G}_m^{+\otimes r}\\
	&\xrightarrow{\star\otimes \id}
	W^+_n\otimes\mathbb{G}_m^{+\otimes (q+r)}.
	\end{align*}
	For each $X\in \Sm_k^\aff$, this makes $h_0 (W_n^+\otimes \mathbb{G}_m^{+\otimes *})(X)$ into a graded ring.
	By Lemma \ref{Gm_anti_commutative} (4), this multiplication is graded commutative.
	Moreover, it is a $\mathbb{Z}_{(p)}$-algebra since $h_0 W_n^+(X)\simeq W_n(X)$ is so.
	\item
	For $q\geq 0$, we define the \emph{Frobenius} by
	$F:=F\otimes \id\colon W^+_{n+1}\otimes\mathbb{G}_m^{+\otimes *}
	\to W^+_n\otimes\mathbb{G}_m^{+\otimes *}$
	and the \emph{Verschiebung} by
	$V:=V\otimes \id\colon W^+_n\otimes\mathbb{G}_m^{+\otimes *}
	\to W^+_{n+1}\otimes\mathbb{G}_m^{+\otimes *}$.
	By Lemma \ref{lem:motivic_p_Witt_properties}, $F$ is a graded ring homomorphism, and we have $V(x\star F(y))=V(x)\star y$, $FV=p$.
	\item
	We define $\lambda\colon W_n\xrightarrow{\sim} h_0 W^+_n$ in $\PSh(\Cor_k^\aff)$ to be the inverse of the isomorphism $\varphi^{(p)}$ given in Corollary \ref{cor:p_typical_comparison}.
	This is a ring homomorphism compatible with $F$ and $V$.
\end{enumerate}
\end{definition}

\begin{definition}
	Let $\Gamma$ be the graph of the diagonal morphism $\Delta\colon \mathbb{A}^1\setminus\{0\}\to (\mathbb{A}^1\setminus\{0\})^2$.
	Then the closure $\overline{\Gamma}\subset \mathbb{P}^1\times \mathbb{P}^1\times \mathbb{P}^1$ of $\Gamma$ satisfies the following condition for $\varepsilon\in \mathbb{Q}_{>0}$:
	$$
		\pr_1^*(2\varepsilon[0]+(n+2\varepsilon)[\infty])|_{\overline{\Gamma}^N}\geq \pr_2^*(\varepsilon[0]+(n+\varepsilon)[\infty])|_{\overline{\Gamma}^N}+\pr_3^*(\varepsilon[0]+\varepsilon[\infty])|_{\overline{\Gamma}^N}.
	$$
	We define $d\colon \widehat{\mathbb{W}}^+_n\to \widehat{\mathbb{W}}^+_n\otimes \mathbb{G}_m^+$ to be the morphism induced by $\Gamma$.
\end{definition}

\begin{lemma}\label{delta_and_Verschiebung}
	For any prime number $\ell$ different from $p$, the following diagram becomes commutative after applying $h_0$:
	$$
		\xymatrix{
			\widehat{\mathbb{W}}^+_{p^{n-1}}\ar[r]^-{d}\ar[d]^-{\ell V_\ell F_\ell}	&
			\widehat{\mathbb{W}}^+_{p^{n-1}}\otimes\mathbb{G}_m^+\ar[d]^-{\ell V_\ell F_\ell\otimes \id}\\
			\widehat{\mathbb{W}}^+_{p^{n-1}}\ar[r]^-{d}&
			\widehat{\mathbb{W}}^+_{p^{n-1}}\otimes\mathbb{G}_m^+.
		}
	$$
\end{lemma}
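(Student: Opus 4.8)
The plan is to prove something slightly stronger: that the two composites $\widehat{\mathbb{W}}^+_{p^{n-1}}\rightrightarrows\widehat{\mathbb{W}}^+_{p^{n-1}}\otimes\mathbb{G}_m^+$ become equal already after applying $\Lcube$, hence after $h_0=\omega_!\Lcube$. The first step is pure bookkeeping. All the arrows in the square are (sums of) graphs and transposes of maps $x\mapsto x^\ell$ of affine lines, so using Lemma \ref{lem:composition_formula} I would identify, on the interiors $\mathbb{G}_{m,S}$, the endomorphism $\ell V_\ell F_\ell$ with the cycle $\ell\,[\{w^\ell=x^\ell\}]$, and then (composing with the graph of the diagonal $\Delta$, which is a pushforward by Lemma \ref{lem:composition_formula}(3)) identify the two composites $\widehat{\mathbb{W}}^+_{p^{n-1}}\to\widehat{\mathbb{W}}^+_{p^{n-1}}\otimes\mathbb{G}_m^+$, written in source coordinate $x$ and target coordinates $(u,v)$, with
$$d\circ(\ell V_\ell F_\ell)=\ell\,[\{u^\ell=x^\ell,\ v=u\}],\qquad (\ell V_\ell F_\ell\otimes\id)\circ d=\ell\,[\{u^\ell=x^\ell,\ v=x\}].$$
Thus, restricting both composites to each $\mathbb{Z}_\tr(\mathbb{P}^1_S,\varepsilon\{0\}+(p^{n-1}+\varepsilon)\{\infty\})$, it suffices to exhibit a cube-homotopy between these two cycles.

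For this I would write down an explicit homotopy, in the spirit of the proof of Lemma \ref{Gm_anti_commutative}: introducing a cube parameter $s$, consider the closed subscheme of $\mathbb{G}_{m,S}\times_S\mathbb{A}^1_S\times_S\mathbb{G}_{m,S}\times_S\mathbb{G}_{m,S}$ defined by
$$u^\ell=x^\ell,\qquad (1-s)(v-u)^\ell+s\,(v-x)^\ell=0.$$
Regarding the second equation as a polynomial in $v$, its leading coefficient is $(1-s)+s=1$ and its constant coefficient is $(-1)^\ell\big((1-s)u^\ell+sx^\ell\big)=(-1)^\ell x^\ell$, using $u^\ell=x^\ell$; both are invertible on the interior, so the subscheme is finite locally free of degree $\ell^2$ over $\mathbb{G}_{m,S}\times_S\mathbb{A}^1_S$ (note $\ell\neq p$ makes $u^\ell=x^\ell$ étale), and no component meets $\{v=0\}$ or $\{v=\infty\}$. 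Its restriction along $i_0$ (i.e. $s=0$) is $\ell\,[\{u^\ell=x^\ell,\ v=u\}]$ and along $i_1$ (i.e. $s=1$) is $\ell\,[\{u^\ell=x^\ell,\ v=x\}]$, so the associated cycle $H$ satisfies $(\id\boxtimes i_0)^*H-(\id\boxtimes i_1)^*H=$ the difference cycle above. It then remains to check that $H$ is an admissible modulus correspondence from $(\mathbb{P}^1_S,\varepsilon\{0\}+(p^{n-1}+\varepsilon)\{\infty\})\boxtimes\bcube$ into $(\mathbb{P}^1_S,\delta\{0\}+(p^{n-1}+\delta)\{\infty\})\boxtimes(\mathbb{P}^1_S,\delta\{0\}+\delta\{\infty\})$ for $\delta$ small compared to $\varepsilon$, so that it represents an element of $(\widehat{\mathbb{W}}^+_{p^{n-1}}\otimes\mathbb{G}_m^+)(\mathcal{P}_\varepsilon\boxtimes\bcube)$.

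That admissibility check is the hard part. One argues via the valuative criterion (Lemma \ref{lem:admissibility_comparison}), analysing the closure of $H$ along each boundary stratum and comparing orders of vanishing of $u,v$ at $0$ and $\infty$ against those of $x$ at $0,\infty$ and of $s$ at $\infty$; the key points are that $u\sim\zeta x$ and $v\sim(\text{unit})\cdot x$ near $\{x=0\}$ and near $\{x=\infty\}$, and that the one branch on which $v\to\infty$ (or $v\to 0$, which can happen when $\ell=2$) does so to first order in the $\{s=\infty\}$ direction. The reason the naive interpolation $v=(1-s)u+sx$ does not work is exactly that its $v$-fibre can pass through $0$; it is the $\ell$-th power interpolation that forces the constant term $(1-s)u^\ell+sx^\ell$ to collapse to $x^\ell$. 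As a consistency check (and an alternative, if one is willing to invoke an injectivity statement for the target), note that over an algebraically closed field $K$ the group $h_0\widehat{\mathbb{W}}^+_{p^{n-1}}(K)$ is generated by Teichmüller classes $[a]$, $a\in K^\times$, and by Proposition \ref{prop:ring-comparison} and the lax monoidal structure on $h_0$ both composites send $[a]$ to $\ell\sum_{\zeta^\ell=1}[\zeta a]\otimes[\zeta a]$ resp. $\ell\sum_{\zeta^\ell=1}[\zeta a]\otimes[a]$, whose difference is $\sum_\zeta[\zeta a]\otimes(\ell[\zeta])=\sum_\zeta[\zeta a]\otimes[\zeta^\ell]=\sum_\zeta[\zeta a]\otimes[1]=0$ by Lemma \ref{Gm_anti_commutative}(3) — which is why the statement is true, the hypothesis $\ell\neq p$ entering only through the finite flatness of $V_\ell F_\ell$.
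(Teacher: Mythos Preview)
Your approach is correct in principle, and the admissibility check can indeed be completed (the key observation being that after substituting $\tilde v=v/x$, $\tilde u=u/x$, the defining polynomial in $\tilde v$ has unit leading and constant coefficients, and its middle coefficients have valuation bounded below by $\nu(s)$, so a Newton-polygon argument gives $|\nu(\tilde v)|\le \nu(s)^-$ and hence $|\nu(v)|\le |\nu(x)|+\nu(s)^-$, which suffices for $\delta\le\min(\varepsilon/2,1)$). So the explicit homotopy you wrote down really does the job.

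However, the paper's argument is considerably shorter and avoids constructing any new homotopy. The trick is to use Lemma~\ref{Gm_anti_commutative}(1), which says $h_0(F_\ell)=\ell\cdot\id$ on $\mathbb{G}_m^+$, to \emph{replace} the scalar $\ell$ in both vertical maps by the operation $\id\otimes F_\ell$ on the target. Concretely, it suffices to show that
\[
(V_\ell F_\ell\otimes\id)\circ(\id\otimes F_\ell)\circ d
\quad=\quad
(\id\otimes F_\ell)\circ d\circ(V_\ell F_\ell)
\]
as morphisms $\widehat{\mathbb{W}}^+_{p^{n-1}}\to\widehat{\mathbb{W}}^+_{p^{n-1}}\otimes\mathbb{G}_m^+$; applying $h_0$ and using $h_0(\id\otimes F_\ell)=\ell\cdot\id$ then recovers the original square. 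But this modified equality holds \emph{on the nose} in $\Cor_S$: both sides are represented by the cycle $\{x^\ell=y^\ell=z\}$ on $(\mathbb{A}^1_S\setminus\{0\})^3$. So no admissibility estimate is needed at all---the one nontrivial homotopy (for $F_\ell\sim\ell\cdot\id$ on $\mathbb{G}_m^+$) was already absorbed into Lemma~\ref{Gm_anti_commutative}(1). Your explicit construction is instructive and self-contained, but the paper's reduction is the cleaner route.
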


\begin{proof}
	Recall from Remark \ref{h0_lax_monoidal} that we have a canonical epimorphism $h_0(\widehat{\mathbb{W}}^+_{p^{n-1}})\otimes h_0(\mathbb{G}_m^+)\twoheadrightarrow h_0(\widehat{\mathbb{W}}^+_{p^{n-1}}\otimes\mathbb{G}_m^+)$.
	By Lemma \ref{Gm_anti_commutative} (1), we have $h_0(\id\otimes F_\ell)=\ell\cdot \id$ on $h_0(\widehat{\mathbb{W}}^+_{p^{n-1}}\otimes\mathbb{G}_m^+)$.
	Therefore it suffices to show that the following diagram is commutative:
	$$
		\xymatrix{
			\widehat{\mathbb{W}}^+_{p^{n-1}}\ar[r]^-d\ar[d]^-{V_\ell F_\ell}&
			\widehat{\mathbb{W}}^+_{p^{n-1}}\otimes\mathbb{G}_m^+\ar[r]^-{\id\otimes F_\ell}&
			\widehat{\mathbb{W}}^+_{p^{n-1}}\otimes\mathbb{G}_m^+\ar[d]^-{V_\ell F_\ell\otimes \id}\\
			\widehat{\mathbb{W}}^+_{p^{n-1}}\ar[r]^-d&
			\widehat{\mathbb{W}}^+_{p^{n-1}}\otimes\mathbb{G}_m^+\ar[r]^-{\id\otimes F_\ell}&
			\widehat{\mathbb{W}}^+_{p^{n-1}}\otimes\mathbb{G}_m^+.
		}
	$$
	This is further reduced to the commutativity of the following diagram in $\Cor_k$:
	$$
		\xymatrix{
			\mathbb{A}^1\setminus\{0\}\ar[r]^-\Delta\ar[d]^-{{}^t\rho_\ell\circ\rho_\ell}&
			(\mathbb{A}^1\setminus\{0\})^2\ar[r]^-{\id\times \rho_\ell}&
			(\mathbb{A}^1\setminus\{0\})^2\ar[d]^-{({}^t\rho_\ell\circ\rho_\ell)\times\id}\\
			\mathbb{A}^1\setminus\{0\}\ar[r]^-\Delta&
			(\mathbb{A}^1\setminus\{0\})^2\ar[r]^-{\id\times\rho_\ell}&
			(\mathbb{A}^1\setminus\{0\})^2.
		}
	$$
	Both compositions are given by the cycle on $(\mathbb{A}^1\setminus\{0\})^3$ defined by $x^\ell=y^\ell=z$.
\end{proof}

\begin{definition}\label{def:motivic_operations}
Let $n\geq 1$.
	We define the \emph{differential}
	$$
		d\colon h_0(W^+_n\otimes\mathbb{G}_m^{+\otimes q})
		\to h_0(W^+_n\otimes\mathbb{G}_m^{+\otimes (q+1)})
	$$
	to be the morphism induced by $d\otimes \id\colon\widehat{\mathbb{W}}^+_{p^{n-1}}\otimes\mathbb{G}_m^{+\otimes q}\to \widehat{\mathbb{W}}^+_{p^{n-1}}\otimes\mathbb{G}_m^{+\otimes (q+1)}$ via Lemma \ref{delta_and_Verschiebung}.
\end{definition}

\begin{lemma}\label{delta_Leibniz}
	The following diagram becomes commutative after applying $h_0$:
	$$
		\xymatrix{
			\widehat{\mathbb{W}}_{p^{n-1}}^{+\otimes 2}\ar[r]^-{\star}\ar[d]^-{d\otimes\id+\id\otimes d}	&
			\widehat{\mathbb{W}}^+_{p^{n-1}}\ar[d]^-{d}\\
			\widehat{\mathbb{W}}_{p^{n-1}}^{+\otimes 2}\otimes\mathbb{G}_m^+\ar[r]^-{\star\otimes\id}&
			\widehat{\mathbb{W}}^+_{p^{n-1}}\otimes\mathbb{G}_m^+.
		}
	$$
\end{lemma}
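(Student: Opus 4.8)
The plan is to reduce the identity to an explicit computation with finite correspondences between the interiors, in the spirit of the proof of Lemma~\ref{Gm_anti_commutative}. Write $\mathbb{G}_{m,S}:=\mathbb{A}^1_S\setminus\{0\}$, so that the interior of $\widehat{\mathbb{W}}^+_{p^{n-1}}$ is $\mathbb{G}_{m,S}$; recall that $\star$ is induced by $\mu\colon \mathbb{G}_{m,S}^2\to\mathbb{G}_{m,S}$, $(x,y)\mapsto xy$, and $d$ by the diagonal $\Delta\colon \mathbb{G}_{m,S}\to\mathbb{G}_{m,S}^2$. Unwinding the definitions and using the symmetry of $\otimes$, one sees that on interiors $d\circ\star$ is the graph $\gamma_1$ of $(x,y)\mapsto(xy,xy)$, while $(\star\otimes\id)\circ(d\otimes\id+\id\otimes d)$ is the sum $\gamma_2$ of the graphs of $(x,y)\mapsto(xy,x)$ and $(x,y)\mapsto(xy,y)$; in all cases the first factor of the target is the $\widehat{\mathbb{W}}^+_{p^{n-1}}$-factor and the second the $\mathbb{G}_m^+$-factor, and both $\gamma_1,\gamma_2$ are modulus correspondences by Lemma~\ref{lem:composition_admissible}. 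Since $\widehat{\mathbb{W}}^+_{p^{n-1}}\otimes\widehat{\mathbb{W}}^+_{p^{n-1}}$ is a filtered colimit of the representables $\mathbb{Z}_\tr(\mathcal{Y}_\varepsilon\boxtimes\mathcal{Y}_\varepsilon)$ with $\mathcal{Y}_\varepsilon:=(\mathbb{P}^1_S,\varepsilon\{0\}+(p^{n-1}+\varepsilon)\{\infty\})$, and $h_0$ commutes with colimits, it suffices by Lemma~\ref{Lh0_description} to produce, for each $\varepsilon$, a cube-homotopy between $\gamma_1$ and $\gamma_2$ viewed as correspondences $\mathcal{Y}_\varepsilon\boxtimes\mathcal{Y}_\varepsilon\to\mathcal{Y}_{\varepsilon'}\boxtimes\mathcal{Z}_{\varepsilon'}$, where $\mathcal{Z}_{\varepsilon'}:=(\mathbb{P}^1_S,\varepsilon'\{0\}+\varepsilon'\{\infty\})$ and $\varepsilon'$ is small relative to $\varepsilon$.

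To this end I would use the subscheme $H\subset\mathbb{G}_{m,S}^2\times_S\mathbb{A}^1_S\times_S\mathbb{G}_{m,S}^2=\Spec_S\mathcal{O}_S[x^\pm,y^\pm,t,z^\pm,w^\pm]$ cut out by
$$
z=xy,\qquad (1-t)(w-x)(w-y)+t(w-xy)(w-1)=0.
$$
Viewing the second equation as a polynomial in $w$, its leading and constant coefficients are units, so $H$ is finite locally free over $\mathbb{G}_{m,S}^2\times_S\mathbb{A}^1_S$ and defines a finite correspondence. Its fibre over $t=0$ is $\{z=xy,\ (w-x)(w-y)=0\}=\gamma_2$, and its fibre over $t=1$ is $\{z=xy,\ (w-xy)(w-1)=0\}$, which as a cycle equals $\gamma_1$ plus the graph of $(x,y)\mapsto(xy,1)$; the latter lies in the image of $i_1$ on the $\mathbb{G}_m^+$-factor and hence vanishes in $\widehat{\mathbb{W}}^+_{p^{n-1}}\otimes\mathbb{G}_m^+$. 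Granting that $H$ defines a modulus correspondence from $(\mathcal{Y}_\varepsilon\boxtimes\mathcal{Y}_\varepsilon)\boxtimes\bcube$ to $\mathcal{Y}_{\varepsilon''}\boxtimes\mathcal{Z}_{\varepsilon''}$ for $\varepsilon''$ small enough, Lemma~\ref{lem:composition_formula} and Lemma~\ref{lem:pullback_lemma} identify the fibres $(\id\boxtimes i_\varepsilon)^*H$ with the flat pullbacks computed above, so in $h_0(\widehat{\mathbb{W}}^+_{p^{n-1}}\otimes\mathbb{G}_m^+)$ one has $(\id\boxtimes i_0)^*H-(\id\boxtimes i_1)^*H=\gamma_2-\gamma_1$, whence $\gamma_1$ and $\gamma_2$ agree in $h_0$ by the definition of $\Lcube$. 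This proves the lemma.

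The main obstacle is the admissibility of $H$: one must check that the closure $\overline{H}$ of each component in $\mathbb{P}^1_x\times\mathbb{P}^1_y\times\mathbb{P}^1_t\times\mathbb{P}^1_z\times\mathbb{P}^1_w$ satisfies the required weak inequality of $\mathbb{Q}$-Cartier divisors. The key point is that on $\overline{H}$ the functions $z^{\pm1},w^{\pm1}$ can acquire a pole only over $t=\infty$ — where the source divisor has coefficient $\geq 1$ and therefore dominates everything — or over one of the boundary components $\{x=0\},\{x=\infty\},\{y=0\},\{y=\infty\}$ of the source, where the defining equations force $z=xy$ and the relevant branch of $w$ to lie over the boundary of the target; a direct comparison of orders of vanishing then gives the inequality once $\varepsilon''$ is chosen small relative to $\varepsilon$, the passage from a bounded multiple to the honest inequality being handled by Lemma~\ref{lem:ineq_domination} (compare the bookkeeping in the proofs of Lemma~\ref{lem:motivic_Witt_properties} and Lemma~\ref{delta_and_Verschiebung}). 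One could alternatively reduce, as in the proof of Theorem~\ref{thm:ab-comparison}, to $S=\Spec K$ with $K$ algebraically closed and verify the Leibniz relation on the generators $[a]\otimes[b]$ ($a,b\in K^\times$) using $[a]+[b]=[ab]$ from Lemma~\ref{Gm_anti_commutative}(3); but the relevant injectivity is not yet available at this point, so I would carry out the homotopy argument over an arbitrary $S$.
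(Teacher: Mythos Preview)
Your approach is correct, and in fact your homotopy $H$ is exactly the right one; but you are doing more work than needed, and the paper's argument shows why.

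The paper observes that both compositions factor through the common morphism
\[
\beta\colon \mathbb{Z}_\tr(\mathbb{A}^1_S\setminus\{0\})^{\otimes 2}\to \mathbb{Z}_\tr(\mathbb{A}^1_S\setminus\{0\})^{\otimes 4},\qquad (x,y)\mapsto (x,y,x,y),
\]
followed respectively by $\mu\otimes\mu$ and $\mu\otimes(\pr_1+\pr_2)$. Thus the whole question reduces to showing that $\mu$ and $\pr_1+\pr_2$ agree after composing with $\mathbb{Z}_\tr(\mathbb{A}^1_S\setminus\{0\})\twoheadrightarrow h_0\mathbb{G}_m^+$, and that is precisely Lemma~\ref{Gm_anti_commutative}(3). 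No new admissibility check is needed: you are simply pre- and post-composing an already established cube-homotopy with modulus correspondences, and such compositions stay in $\QCor_S$ by Lemma~\ref{lem:composition_admissible}.

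Your explicit $H$ is nothing other than this factorization written out: the equation $z=xy$ records the common first factor $\mu$, and the quadratic equation in $w$ is verbatim the homotopy $\Gamma$ from Lemma~\ref{Gm_anti_commutative}(3) with $a=x$, $b=y$. Once you see this, the admissibility of $H$ is immediate from the monoidal structure on $\QCor_S$ (tensor of admissible correspondences is admissible) together with the admissibility of $\mu$ for $\widehat{\mathbb{W}}^+$ and of $\Gamma$ for $\mathbb{G}_m^+$, both already known. So your ``main obstacle'' dissolves, and the valuation bookkeeping you sketch is unnecessary. Your closing remark about reducing to an algebraically closed field is well taken: that route is indeed blocked here, but the paper's factorization gives a third option that needs neither injectivity nor a fresh admissibility argument.
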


\begin{proof}
	It suffices to show that the following diagram becomes commutative after composing with the canonical epimorphism $\mathbb{Z}_\tr(\mathbb{A}^1\setminus\{0\})^{\otimes 2}\to h_0 (\widehat{\mathbb{W}}^+_{p^{n-1}}\otimes\mathbb{G}_m^+)$:
	$$
		\xymatrix{
		\mathbb{Z}_\tr(\mathbb{A}^1\setminus\{0\})^{\otimes 2}\ar[r]^-	\mu\ar[d]^-{q_1+q_2}
		&\mathbb{Z}_\tr(\mathbb{A}^1\setminus\{0\})\ar[d]^-\Delta\\
		\mathbb{Z}_\tr(\mathbb{A}^1\setminus\{0\})^{\otimes 3}\ar[r]^-{\mu\otimes \id}
		&\mathbb{Z}_\tr(\mathbb{A}^1\setminus\{0\})^{\otimes 2}.
		}
	$$
	Here, $q_1$ (resp. $q_2$) denotes the morphism $(x,y)\mapsto (x,y,x)$ (resp. $(x,y)\mapsto (x,y,y)$).
	The two compositions are identified with
	\begin{align*}
		&\mathbb{Z}_\tr(\mathbb{A}^1\setminus\{0\})^{\otimes 2}\xrightarrow{\beta}
		\mathbb{Z}_\tr(\mathbb{A}^1\setminus\{0\})^{\otimes 4}\xrightarrow{\mu\otimes\mu}
		\mathbb{Z}_\tr(\mathbb{A}^1\setminus\{0\})^{\otimes 2},
		\\
		&\mathbb{Z}_\tr(\mathbb{A}^1\setminus\{0\})^{\otimes 2}\xrightarrow{\beta}
		\mathbb{Z}_\tr(\mathbb{A}^1\setminus\{0\})^{\otimes 4}\xrightarrow{\mu\otimes (\pr_1+\pr_2)}
		\mathbb{Z}_\tr(\mathbb{A}^1\setminus\{0\})^{\otimes 2}
	\end{align*}
	where $\beta$ is the morphism $(x,y)\mapsto (x,y,x,y)$.
	Therefore it suffices to show that the two morphisms
	$$
		\mu,\pr_1+\pr_2\colon \mathbb{Z}_\tr(\mathbb{A}^1\setminus\{0\})^{\otimes 2}\to \mathbb{Z}_\tr(\mathbb{A}^1\setminus\{0\})
	$$
	coincide after composing with the canonical epimorphism $\mathbb{Z}_\tr(\mathbb{A}^1\setminus\{0\})\twoheadrightarrow h_0\mathbb{G}_m^+$.
	This follows from Lemma \ref{Gm_anti_commutative}.
\end{proof}

\begin{lemma}\label{FdV}
	The following diagram becomes commutative after applying $h_0$:
	$$
		\xymatrix{
			\widehat{\mathbb{W}}^+_{p^{n-1}}\ar[r]^-{V_p}\ar[d]^-{d}	&
			\widehat{\mathbb{W}}^+_{p^n}\ar[d]^-{d}\\
			\widehat{\mathbb{W}}^+_{p^{n-1}}\otimes\mathbb{G}_m^+&
			\widehat{\mathbb{W}}^+_{p^n}\otimes\mathbb{G}_m^+\ar[l]_-{F_p\otimes \id}.
		}
	$$
\end{lemma}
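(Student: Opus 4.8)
The plan is to deduce the relation $FdV=d$ from Lemma~\ref{Gm_anti_commutative}(2), which says that the Verschiebung $V_p\colon\mathbb{G}_m^+\to\mathbb{G}_m^+$ becomes the identity after applying $h_0$; this is precisely where the hypothesis $p\neq 2$ enters, through the parity of $p-1$.

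Concretely, I would imitate the proof of Lemma~\ref{Gm_anti_commutative}(2) and exhibit a single cube-homotopy. Write $\mathbb{A}^1_S\setminus\{0\}=\Spec_S\mathcal{O}_S[x^\pm]$ for the interior of a compactification computing $\widehat{\mathbb{W}}^+_{p^{n-1}}$, and $\Spec_S\mathcal{O}_S[u_1^\pm,u_2^\pm]$ for the interior of $\widehat{\mathbb{W}}^+_{p^{n-1}}\otimes\mathbb{G}_m^+$. Consider the closed subscheme $\Gamma$ of $\Spec_S\mathcal{O}_S[x^\pm,t,u_1^\pm,u_2^\pm]$ defined by
\[
u_1=x,\qquad (1-t)(u_2^p-x)+t\,(u_2-x)(u_2-1)^{p-1}=0,
\]
i.e. the homotopy of Lemma~\ref{Gm_anti_commutative}(2) in the variable $u_2$ with the equation $u_1=x$ adjoined. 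Since $p$ is odd, the second equation is monic of degree $p$ in $u_2$ with constant term $-x$, a unit; hence $\Gamma$ is finite locally free over $(\mathbb{A}^1_S\setminus\{0\})\times_S\mathbb{A}^1_S$ and defines a finite correspondence inducing a morphism $\widehat{\mathbb{W}}^+_{p^{n-1}}\otimes\mathbb{Z}_\tr(\bcube)\to\widehat{\mathbb{W}}^+_{p^{n-1}}\otimes\mathbb{G}_m^+$. Its restriction to $t=0$ is the cycle $\{u_1=x,\ u_2^p=x\}$; a short computation with the composition formula (Lemma~\ref{lem:composition_formula}) shows that this cycle represents $(F_p\otimes\id)\circ d\circ V_p$ — indeed $V_p={}^t\rho_p$ passes to the $p$-th roots of $x$, then $d$ duplicates the coordinate, then $F_p\otimes\id$ restores the $p$-th power in the first copy only. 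Its restriction to $t=1$ is $[\{u_1=x,\ u_2=x\}]+(p-1)\,[\{u_1=x,\ u_2=1\}]$; the first summand is $d$, and the second induces the zero morphism because its $\mathbb{G}_m^+$-coordinate $u_2$ is constant equal to $1$ and therefore dies in $\mathbb{G}_m^+=(\cdots)/\mathbb{Z}$. Since $h_0$ identifies the $t=0$ and $t=1$ restrictions of a correspondence (the defining property of $\Lcube$, used exactly as in the proof of Lemma~\ref{Gm_anti_commutative}), this gives $h_0\big((F_p\otimes\id)\circ d\circ V_p\big)=h_0(d)$, which is the asserted commutativity. (Equivalently, the computation above shows $(F_p\otimes\id)\circ d\circ V_p=(\id\otimes V_p^{\mathbb{G}_m^+})\circ d$ already in $\PSh(\QCor_S^\prop)$, and one then applies $h_0$ together with Lemma~\ref{Gm_anti_commutative}(2).)

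The one delicate point, which I would treat just as in Lemma~\ref{Gm_anti_commutative}, is to check that the cycles above are admissible modulus correspondences for a suitable choice of modulus parameters: in the $u_1$-direction this is automatic since the correspondence is the diagonal there, and in the $u_2$-direction the zeros and poles of $u_2$ along the boundary divisors are bounded by the source modulus once the coefficient of the divisor on the $\mathbb{G}_m^+$-factor of the target is taken small enough. I expect this bookkeeping to be the main technical obstacle; the mathematical content is just the cycle identity together with Lemma~\ref{Gm_anti_commutative}(2).
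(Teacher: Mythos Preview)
Your proposal is correct, and the parenthetical remark at the end is precisely the paper's argument: one verifies the strict cycle identity $(F_p\otimes\id)\circ d\circ V_p=(\id\otimes V_p)\circ d$ in $\Cor_S$ (both sides are the cycle on $(\mathbb{A}^1_S\setminus\{0\})^3$ cut out by $x=y=z^p$), and then applies $h_0$ together with Lemma~\ref{Gm_anti_commutative}(2). The explicit cube-homotopy you construct is correct but redundant: your $\Gamma$ is literally the diagonal in $u_1$ crossed with the homotopy already used to prove Lemma~\ref{Gm_anti_commutative}(2) in $u_2$, so the admissibility ``bookkeeping'' you flag as the main obstacle has already been absorbed into that lemma. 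The paper simply cites Lemma~\ref{Gm_anti_commutative}(2) and checks the remaining cycle equality, avoiding any new homotopy or admissibility verification here.
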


\begin{proof}
	By Lemma \ref{Gm_anti_commutative} (2) and our assumption that $p$ is odd, we have $h_0(\id\otimes V_p)=\id$ on $h_0(\widehat{\mathbb{W}}^+_{p^{n-1}}\otimes\mathbb{G}_m^+)$.
	Therefore it suffices to show that the following diagram is commutative:
	$$
		\xymatrix{
			\widehat{\mathbb{W}}^+_{p^{n-1}}\ar[rr]^-{V_p}\ar[d]^-{d}	&&
			\widehat{\mathbb{W}}^+_{p^n}\ar[d]^-{d}\\
			\widehat{\mathbb{W}}^+_{p^{n-1}}\otimes\mathbb{G}_m^+\ar[r]^-{\id\otimes V_p}&
			\widehat{\mathbb{W}}^+_{p^{n-1}}\otimes\mathbb{G}_m^+&
			\widehat{\mathbb{W}}^+_{p^n}\otimes\mathbb{G}_m^+\ar[l]_-{F_p\otimes \id}.
		}
	$$
	This is further reduced to the commutativity of the following diagram in $\Cor_k$:
	$$
		\xymatrix{
			\mathbb{A}^1\setminus\{0\}\ar[rr]^-{{}^t\rho_p}\ar[d]^-{\Delta}	&&
			\mathbb{A}^1\setminus\{0\}\ar[d]^-{\Delta}\\
			(\mathbb{A}^1\setminus\{0\})^2\ar[r]^-{\id\otimes {}^t\rho_p}&
			(\mathbb{A}^1\setminus\{0\})^2&
			(\mathbb{A}^1\setminus\{0\})^2\ar[l]_-{\rho_p\otimes \id}.
		}
	$$
	Both compositions are given by the cycle on $(\mathbb{A}^1\setminus\{0\})^3$ defined by $x=y=z^p$.
\end{proof}

\begin{theorem}\label{thm:theta}
	Let $X=\Spec A\in \Sm_k^\aff$.
	Then the tuple $(a_\Nis h_0(W^+_\bullet\otimes \mathbb{G}_m^{+\otimes *})(X), F, V, \lambda)$ is a Witt complex over $A$.
	In particular, we have a unique homomorphism of Witt complexes
	$$
		\theta\colon W_\bullet \Omega^*(A)\to a_\Nis h_0(W^+_\bullet\otimes \mathbb{G}_m^{+\otimes *})(X)
	$$
	which extends to a morphism $\theta\colon W_n \Omega^q\to a_\Nis h_0(W^+_n\otimes \mathbb{G}_m^{+\otimes q})$ in $\Sh_\Nis(\Sm_k)$.
\end{theorem}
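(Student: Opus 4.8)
The plan is to verify, term by term, the four defining axioms of a Witt complex for the tuple $(E_\bullet^*, F, V, \lambda)$ with $E_n^q := a_\inj a_\Nis h_0(W_n^+\otimes\mathbb{G}_m^{+\otimes q})(X)$, using the structure maps introduced in this section. Most of the required identities have already been established at the level of presheaves on $\QCor_S^\prop$ or after applying $h_0$, so the main work is to package them correctly and to promote them through the functors $a_\Nis$ and $a_\inj$ (which are exact and monoidal-compatible enough for this purpose). First I would check that each $E_\bullet^*$ is a graded-commutative DGA: graded commutativity is Lemma \ref{Gm_anti_commutative} (4), associativity and unitality of $\star$ come from the corresponding properties of the multiplication on $\mathbb{A}^1_S$ (Lemma \ref{lem:motivic_p_Witt_properties} and the definition of $\star$ on $W_n^+\otimes\mathbb{G}_m^{+\otimes *}$), and that it is a $\mathbb{Z}_{(p)}$-algebra follows from $h_0 W_n^+(X)\simeq W_n(X)$ via Corollary \ref{cor:p_typical_comparison}. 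The differential $d$ is defined in Definition \ref{def:motivic_operations} via $\Delta$, and $d^2=0$ should follow from Lemma \ref{Gm_anti_commutative} (5) (the map $\delta$ is annihilated by $2$, hence by an odd prime argument combined with graded commutativity); the Leibniz rule $d(x\star y)=dx\star y\pm x\star dy$ is exactly Lemma \ref{delta_Leibniz}, extended to higher $\mathbb{G}_m^+$-factors by inserting identities and using graded commutativity to move the differential past the extra factors.

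Next I would treat the structural maps $R$, $F$, $V$, $\lambda$ and their compatibilities. The restriction maps $R$ are induced by the evident transition morphisms $W_{n+1}^+\to W_n^+$ (induced from $\widehat{\mathbb{W}}_{p^n}^+\to\widehat{\mathbb{W}}_{p^{n-1}}^+$), and $F$, $V$ are defined as $F\otimes\id$, $V\otimes\id$, so compatibility with $R$ is formal. That $F$ is a graded ring homomorphism and that $V(x\star F(y))=V(x)\star y$ and $FV=p\cdot\id$ are Lemma \ref{lem:motivic_p_Witt_properties} (4), (3), (2), tensored with $\id_{\mathbb{G}_m^{+\otimes *}}$ (and note the differential $d$ was \emph{defined} so as to be compatible with the idempotent decomposition, via Lemma \ref{delta_and_Verschiebung}, so that $F\otimes\id$, $V\otimes\id$, $d$ all descend to $W_n^+$). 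That $\lambda\colon W_n(A)\to E_n^0$ is a ring homomorphism compatible with $R$, $F$, $V$ is Corollary \ref{cor:p_typical_comparison} (it is the inverse of $\varphi^{(p)}$). The relation $FdV=d$ is Lemma \ref{FdV}, tensored with identities; here the hypothesis $p\neq 2$ enters through Lemma \ref{Gm_anti_commutative} (2), which requires $p$ odd so that $h_0(\id\otimes V_p)=\id$.

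The last axiom, $F(d\lambda[a])=\lambda[a^{p-1}]\star d\lambda[a]$ for $a\in A$, is the one requiring genuine computation rather than assembly. I would reduce, exactly as in the proof of Theorem \ref{thm:ab-comparison} and Proposition \ref{prop:ring-comparison}, to the case $X=\Spec K$ with $K$ an algebraically closed field — using that $W_n\Omega^q$ and the right-hand side both have global injectivity (the former by the remark after its definition, the latter because $a_\inj a_\Nis$ of anything does) and that $W_n(A)$ is generated by Teichmüller elements $[a]$ over $K$. Over $K$ one computes both sides using the explicit description of $\lambda[a]$, $F_p$, and $d$ (via the cycles on $(\mathbb{A}^1_S\setminus\{0\})^k$ appearing in Lemma \ref{Gm_anti_commutative} and Lemma \ref{FdV}); concretely, $F_p$ on Teichmüller elements is $[a]\mapsto[a^p]$, and chasing $d\lambda[a]$ through $F_p\otimes\id$ should produce the logarithmic-derivative identity $[a^{p-1}]\star d\lambda[a]$ on the nose. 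Once all four axioms are verified, initiality of $W_\bullet\Omega^*_A$ in the category of Witt complexes over $A$ yields the unique map $\theta$; its naturality in $X\in\Sm_S^\aff$ (which follows because every ingredient was natural in $X$) together with the sheaf property of the target allows $\theta$ to be sheafified to a morphism $W_n\Omega^q\to a_\inj a_\Nis h_0(W_n^+\otimes\mathbb{G}_m^{+\otimes q})$ in $\Sh_\Nis(\Sm_S)$. The main obstacle I anticipate is precisely the Frobenius–differential relation on Teichmüller elements: it is the only place where the combinatorics of the defining cycles must be unwound explicitly, and one must be careful that the reduction to an algebraically closed field is legitimate for a relation involving $d$ (not just additive/multiplicative structure), which is why the global injectivity of both sides is essential.
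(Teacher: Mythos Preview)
Your proposal is correct and follows essentially the same route as the paper: the CDGA structure from Lemma~\ref{Gm_anti_commutative} and Lemma~\ref{delta_Leibniz}, the relation $FdV=d$ from Lemma~\ref{FdV}, and $d^2=0$ from Lemma~\ref{Gm_anti_commutative}~(5) together with invertibility of $2$ in $\mathbb{Z}_{(p)}$, exactly as the paper does.

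The one place where the paper is more direct is the Teichm\"uller axiom $F(d\lambda[a])=\lambda[a^{p-1}]\star d\lambda[a]$. You propose reducing to $X=\Spec K$ with $K$ algebraically closed and then computing; the paper instead uses global injectivity of $a_\inj a_\Nis h_0(W_n^+\otimes\mathbb{G}_m^{+})$ (Corollary~\ref{global_inj}) only to restrict to the dense open $D(a)\subset X$ where $a$ is invertible. Once $a\in A^\times$, both sides are visibly represented by the single morphism $(a^p,a)\colon\Spec A\to(\mathbb{A}^1_S\setminus\{0\})^2$, and the identity holds on the nose---no field reduction or cycle combinatorics needed. Your remark about $W_n(A)$ being generated by Teichm\"uller elements is superfluous here (the axiom already concerns individual $[a]$), and the global injectivity of $W_n\Omega^q$ plays no role in this step since the identity lives entirely in the target.
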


\begin{proof}
	By Lemma \ref{Gm_anti_commutative} (5), we have $2d^2=0$ and hence $d^2=0$.
	By Lemma \ref{delta_Leibniz} and Lemma \ref{Gm_anti_commutative} (4), we see that $d$ satisfies the Leibniz rule.
	The relation $FdV=d$ holds by Lemma \ref{FdV}.
	Let us show that the relation
	$$
		F(d\lambda[a])=\lambda[a^{p-1}]\star d\lambda[a]
	$$
	holds for any $a\in A$.
	We use the \emph{injectivity theorem for reciprocity sheaves} \cite[Theorem 6 \& 7]{KSY1}; since $h_0(W^+_n\otimes \mathbb{G}_m^{+\otimes q})$ is a reciprocity presheaf, its Nisnevich sheafification has global injectivity.
	This allows us to assume that $a$ is invertible in $A$.
	In this case, both sides of the claimed formula are represented by the morphism $(a^p,a)\colon \Spec A\to (\mathbb{A}^1\setminus\{0\})^2$.
\end{proof}

\subsection{Compatibility with transfers}

In the last subsection, we have constructed a morphism $\theta\colon W_n \Omega^q\to a_\Nis h_0(W^+_n\otimes \mathbb{G}_m^{+\otimes q})$ in $\Sh_\Nis(\Sm_k)$.
In this subsection, we prove that $\theta$ is compatible with transfers.

\begin{lemma}[Projection formula]
	Let $X\in \Sm_k$, $Y\in \Sm_X$ and let $\pi\colon Y\to X$ be the structure morphism.
	Let $a\in W_n\Omega^q(X)$ and $b\in W_n\Omega^q(Y)$.
	For any $\alpha\in \Cor_X(X,Y)$, we have
	$$
		\alpha^*(\pi^*a\star b)=a \star \alpha^*b.
	$$
\end{lemma}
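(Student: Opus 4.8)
The plan is to reduce the projection formula for $W_n\Omega^q$ to the projection formula for the underlying presheaf with transfers structure, and ultimately to the general projection formula already established for $\PSh(\Cor_S)$ (the ``Projection formula'' lemma stated in \S\ref{sec:preliminiary}, which asserts $\alpha^*(\pi^*a\otimes b) = a\otimes(\alpha^*b)$ for the tensor product of presheaves with transfers). The point is that $W_n\Omega^q$ is \emph{not} merely an abstract presheaf with transfers: it carries a multiplication $\star$ making $W_n\Omega^\bullet$ a graded-commutative algebra, and the content of the lemma is that this \emph{internal} multiplication interacts well with the transfer action. So the strategy is: (i) express the multiplication $\star$ on $W_n\Omega^q$ as being induced from a morphism of presheaves with transfers $W_n\Omega^q \otimes W_n\Omega^q \to W_n\Omega^{2q}$ (or more precisely from the pairing $W_n\Omega^q\otimes W_n\Omega^r \to W_n\Omega^{q+r}$), and (ii) apply the general projection formula to this morphism.

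First I would recall that $W_n\Omega^q$ is defined over $S$ as the unique transfer-compatible extension of the de Rham--Witt sheaf, and that it has global injectivity (Corollary \ref{global_inj} together with the assumption that $S$ admits de Rham--Witt transfers). By global injectivity, it suffices to verify the identity $\alpha^*(\pi^*a\star b) = a\star\alpha^*b$ after restriction to generic points, i.e., we may replace $S$ by $\Spec k(\eta)$ for $\eta\in S^{(0)}$; and since de Rham--Witt transfers over $S$ are pulled back from those over a perfect field (or the structure morphism $\pi\colon S\to \Spec k$ is pro-smooth), we reduce further to the case $S=\Spec k$ with $k$ a perfect field of characteristic $p$, where the transfer structure and the trace maps are the classical ones of \cite{Rulling-thesis} and \cite{KSY1}. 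Here the multiplication $W_n\Omega^q\otimes_{\mathrm{tr}} W_n\Omega^r \to W_n\Omega^{q+r}$ is a morphism in $\PSh(\Cor_k)$ — this compatibility of the algebra structure with transfers is precisely part of the content of the trace formalism in \cite{Rulling-thesis} (the trace maps are $W_n\Omega^\bullet$-linear).

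With this in hand, the key step is a direct application of the general Projection formula from the preliminaries: taking $F=W_n\Omega^q$, $G=W_n\Omega^q$, and the morphism $W_n\Omega^q\otimes W_n\Omega^q\to W_n\Omega^{q+q}$ given by $\star$, together with the fact that $\alpha^*(\pi^*a\otimes b)=a\otimes\alpha^*b$ in $(W_n\Omega^q\otimes W_n\Omega^q)(X)$, we post-compose with $\star$ to obtain $\alpha^*(\pi^*a\star b) = a\star\alpha^*b$ in $W_n\Omega^{q+q}(X)$; then we observe that in the statement of the lemma the target degree is unrestricted, so the same argument with $W_n\Omega^q\otimes W_n\Omega^0$ or the appropriate bidegree handles the general case, but in fact the formula as stated (with $a,b$ both in degree $q$ and the product landing in degree $2q$, read inside the graded algebra) is exactly this. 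I expect the \textbf{main obstacle} to be the bookkeeping in step (i): one must be careful that the ``multiplication'' $\star$ appearing in the lemma — which a priori is only defined sectionwise as the product in the graded ring $W_n\Omega^\bullet(X)$ — genuinely arises from a morphism in $\PSh(\Cor_S)$ compatible with the transfer structure, rather than merely from a morphism of Nisnevich sheaves without transfers. This is where the hypothesis ``$S$ admits de Rham--Witt transfers'' does real work, and where one invokes the $W_n\Omega^\bullet$-linearity of R\"ulling's trace maps; once that is granted, the rest is a formal consequence of the projection formula already proved for presheaves with transfers.
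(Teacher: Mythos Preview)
Your approach is correct and rests on the same key input as the paper: the projection formula for R\"ulling's trace maps. The paper's proof is the direct one-liner: since the transfer structure on $W_n\Omega^q$ is by hypothesis compatible with those trace maps, the lemma follows from their projection formula (implicitly via the standard reduction to fields using global injectivity, where $\alpha^*$ decomposes into trace maps composed with ring-homomorphism pullbacks). Your route through the abstract projection formula in $\PSh(\Cor_S)$ requires first showing that the multiplication $\star$ gives a morphism $W_n\Omega^q\otimes W_n\Omega^r\to W_n\Omega^{q+r}$ of presheaves with transfers; but that statement is at least as strong as the lemma itself, and its verification is the very same reduction-to-trace-maps argument. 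So the detour does not save work: once the trace-map projection formula is in hand, the lemma follows directly without the intermediate upgrade of $\star$.
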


\begin{proof}
	Since the transfer structure of $W_n\Omega^q$ is compatible with the trace maps defined in \cite[Theorem 2.6]{Rulling-thesis}, the claim follows from the projection formula for the trace maps.
\end{proof}

\begin{lemma}\label{theta_transfer_1}
	For any $X=\Spec A\in \Sm_k$ and $\alpha\in \Cor_X(X, \mathbb{A}^1_X)$, the following diagram is commutative:
	$$
	\xymatrix{
		W_n\Omega^q(A[t])\ar[r]^-{\theta}\ar[d]^-{\alpha^*}			&a_\Nis h_0(W_n^+\otimes\mathbb{G}_m^{+\otimes q})(A[t])\ar[d]^-{\alpha^*}\\
		W_n\Omega^q(A)\ar[r]^-{\theta}		&a_\Nis h_0(W_n^+\otimes\mathbb{G}_m^{+\otimes q})(A).
	}
	$$
\end{lemma}

The following proof is inspired by the proof of \cite[Proposition 5.19]{KP11}.

\begin{proof}
	We prove by induction on $q$.
	For $q=0$, this follows from Corollary \ref{cor:p_typical_comparison}.
	Suppose that $q\geq 1$.
	We prove by induction on $n$.
	We say that an element $\omega\in W_n\Omega^q_{A[t]}$ is \emph{traceable} if $\alpha^*\theta(\omega)=\theta(\alpha^*\omega)$ holds.
	Since $F$, $V$, and $d$ are compatible with $\alpha^*$, if $\omega$ is traceable then so is $F(\omega)$, $V(\omega)$, and $d\omega$.
	The group $W_n\Omega^q_{A[t]}$ is a quotient of $\Omega^q_{W_n(A[t])}$ and hence generated by elements of the form
	$$
		\omega = V^{j_0}([a_0])\star dV^{j_1}([a_1])\star \dots\star dV^{j_q}([a_q])
	$$
	where $a_0,\dots,a_q\in A[t]$.
	Let us prove that $\omega$ is traceable.
	
	\noindent
	(1)	If $j_0=\dots=j_q=0$, then we can write $\omega$ as a $\mathbb{Z}$-linear combination of elements of the form
	\begin{align*}
		\eta &= [c_0t^{k_0}]\star d[c_1]\star \dots\star d[c_q],\\
		\xi &= [c_0t^{k_0}]\star d[t]\star d[c_1]\star \dots\star d[c_{q-1}]
	\end{align*}
	where $c_0,\dots,c_q\in A$.
	First we show that $\eta$ is traceable.
	By the projection formula, it suffices to show that $[c_0t^{k_0}]$ is traceable.
	This follows from the case $q=0$.
	Next we show that $\xi$ is traceable.
	We write $k_0+1=mp^e$ with $p\nmid m$.
	Then we have
	\begin{align*}
		F^e(d[t^m])&=F^{e-1}([t^{m(p-1)}]\star d[t^m])\\
		&=F^{e-2}([t^{mp(p-1)}]\star [t^{m(p-1)}]\star d[t^m])\\
		&=\cdots=[t^{m(p^e-1)}]\star d[t^m]\\
		&=m[t^{mp^e-1}]\star d[t] = m[t^{k_0}]\star d[t]
	\end{align*}
	and hence
	$$
		\xi = m^{-1}F^e (d[t^m])\star [c_0]\star d[c_1]\star \dots\star d[c_{q-1}].
	$$
	By the projection formula, it suffices to show that $m^{-1}F^e (d[t^m])$ is traceable.
	This follows from the case $q=0$.
	
	\noindent
	(2)	If $j_0\geq 1$, then we have $\omega = V(\omega')$ for some $\omega'\in W_{n-1}\Omega^q_{A[t]}$, so the induction hypothesis on $n$ shows that $\omega$ is traceable.
	
	\noindent
	(3)	If $j_0=0$ and $j_1\geq 1$, then we have
	$$
		[a_0]\star dV^{j_1}([a_1])=d([a_0]\star V^{j_1}([a_1])) - V^{j_1}([a_1]) \star d([a_0]).
	$$
	We set $\omega_0=[a_0]\star V^{j_1}([a_1])$, $\omega_1=V^{j_1}([a_1]) \star d([a_0])$, and $\omega_2=dV^{j_2}([a_2])\star \dots\star dV^{j_q}([a_q])$, so that $\omega=d(\omega_0\star \omega_2)-\omega_1\star \omega_2$.
	Then $d(\omega_0\star \omega_2)$ is traceable by the induction hypothesis on $q$, and $\omega_1\star\omega_2$ is traceable by (1).
	
	\noindent
	(4)	If $j_0=0$ and $j_i\geq 1$ for some $2\leq i\leq q$, then $\omega$ is traceable by the same argument as in (3).
\end{proof}

\begin{lemma}\label{theta_transfer_2}
	Let $X=\Spec A\in \Sm_k^\aff$ and set $A_N=A[t_1,\dots,t_N]$.
	For any $\alpha\in \Cor_X(X,\mathbb{A}^N_X)$, the following diagram is commutative:
	$$
	\xymatrix{
		W_n\Omega^q(A_N)\ar[r]^-{\theta}\ar[d]^-{\alpha^*}		&a_\Nis h_0(W_n^+\otimes\mathbb{G}_m^{+\otimes q})(A_N)\ar[d]^-{\alpha^*}\\
		W_n\Omega^q(A)\ar[r]^-{\theta}		 &a_\Nis h_0(W_n^+\otimes\mathbb{G}_m^{+\otimes q})(A).
	}
	$$
\end{lemma}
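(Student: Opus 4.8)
The plan is to argue by induction on $N$, the base case $N=1$ being Lemma \ref{theta_transfer_1}. Fix $\alpha\in c_A(\Spec A,\Spec A_N)$, where $A_N=A[t_1,\dots,t_N]$, and — imitating the proof of Lemma \ref{theta_transfer_1} — call $\omega\in W_n\Omega^q_{A_N}$ \emph{traceable} if $\alpha^*\theta(\omega)=\theta(\alpha^*\omega)$; we must show that every $\omega$ is traceable. Since $F$, $V$, $d$ commute with $\alpha^*$ and, by Theorem \ref{thm:theta}, with $\theta$, the traceable elements form a subgroup stable under $F$, $V$, $d$. Writing $\pi\colon\Spec A_N\to\Spec A$ for the structure morphism, the projection formula for $W_n\Omega^q$ together with the projection formula in $\PSh(\Cor_S)$ (applied to the representable constituents of $h_0(W_n^+\otimes\mathbb{G}_m^{+\otimes q})$, and using that $\theta$ is multiplicative) shows that $\pi^*\omega_A\star\beta$ is traceable whenever $\omega_A\in W_n\Omega^*_A$ and $\beta\in W_n\Omega^*_{A_N}$ is traceable. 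Finally, for each proper subset $I\subsetneq\{1,\dots,N\}$ and the associated coordinate projection $\varpi_I\colon\Spec A_N\to\Spec A[(t_i)_{i\in I}]$, one has $\alpha^*\varpi_I^*=(\varpi_I\circ\alpha)^*$ with $\varpi_I\circ\alpha\in c_A(\Spec A,\Spec A[(t_i)_{i\in I}])$, so by the inductive hypothesis on $N$ every element of the form $\varpi_I^*\omega_0$ is traceable.

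Granting these stability properties, the reduction to generators proceeds exactly as in Lemma \ref{theta_transfer_1}: a secondary induction on $q$, with base case $q=0$ given by Corollary \ref{cor:p_typical_comparison}, and inside it an induction on $n$. Generators in which some $V^{j}$ with $j\ge1$ appears, or in which a differential $dV^{j_i}([a_i])$ with $i\ge1$ appears, are treated verbatim as in cases (2)--(4) of that proof (Leibniz rule, compatibility of $V$ with $\alpha^*$, induction on $q$ and $n$); those arguments never used that there was only one variable. Hence the only new point is the analogue of case (1): a generator $\omega=[a_0]\star d[a_1]\star\cdots\star d[a_q]$ with $a_0,\dots,a_q\in A_N$. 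Using that $W_n(A_N)$ is spanned over $\mathbb{Z}$ by the $V^{j}([c\,t^\nu])$ ($c\in A$, $\nu$ a multi-index), Teichm\"uller multiplicativity, the Leibniz rule, the identity $V^{j}(x)\star d[t_i]=V^{j}(x\star[t_i^{p^{j}-1}]\star d[t_i])$, and compatibility of $V$ with $\alpha^*$ to peel off the $V^{j}$, $j\ge1$ (induction on $n$), one reduces, modulo traceable elements already accounted for, to basic monomial forms $\pi^*\omega_A\star[t^\nu]\star d[t_{\ell_1}]\star\cdots\star d[t_{\ell_b}]$ with $\ell_1<\cdots<\ell_b$. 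Stripping off $\pi^*\omega_A$ by the projection formula, it suffices to prove that $\beta:=[t^\nu]\star d[t_{\ell_1}]\star\cdots\star d[t_{\ell_b}]$ is traceable.

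If strictly fewer than $N$ of the variables actually occur in $\beta$, then $\beta=\varpi_I^*\beta_0$ for the coordinate projection onto the occurring variables, and we are done by the inductive hypothesis on $N$. If every variable occurs, one absorbs the differentiated variables using the Frobenius identities $F^{e}(d[t_i^{m}])=m[t_i^{mp^{e}-1}]\star d[t_i]$ ($e\ge0$ allowed, $m$ prime to $p$ hence invertible in $\mathbb{Z}_{(p)}$); since $F$ preserves traceability and $d[u]\star\eta=d(u\star\eta)-u\star d\eta$, this rewrites $\beta$, up to a unit of $\mathbb{Z}_{(p)}$, as a combination of $F^{e}$ of the differential of a Teichm\"uller lift of a monomial (traceable by the $q=0$ case) and of basic forms of strictly smaller degree, or with strictly fewer variables occurring only through the monomial factor; these are disposed of by the secondary inductions. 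All inductions bottom out at $q=0$, i.e.\ at Corollary \ref{cor:p_typical_comparison}.

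I expect the main obstacle to be this last step: identifying a workable normal form for $W_n\Omega^*$ of a polynomial algebra over $A$ and taming the ``spectator'' monomial factors $[t_m^{\nu_m}]$, which the projection formula cannot strip — the delicate sub-case being $p\nmid\nu_m$, where the naive Frobenius trick degenerates and one must trade the monomial for an extra differential at the cost of lowering $q$ or the number of spectator variables. This is exactly the part of the argument patterned on \cite[Proposition 5.19]{KP11}. A secondary point needing care is establishing the stability of the traceable subgroup under the motivic projection formula, which rests on the projection formula in $\PSh(\Cor_S)$ and the multiplicativity of $\theta$ coming from Theorem \ref{thm:theta}.
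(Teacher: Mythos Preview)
Your approach differs substantially from the paper's and has a genuine gap in the final step.

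The paper's argument is much shorter. It first reduces to the case where $A=K$ is a field pro-smooth over $S$, using that the target $a_\inj a_\Nis h_0(W_n^+\otimes\mathbb{G}_m^{+\otimes q})$ has global injectivity (Corollary~\ref{global_inj}). Over a field, $\alpha$ is a $\mathbb{Z}$-linear combination of closed points $[x]\subset\Spec K_N$; projecting $x$ to $y\in\Spec K_{N-1}$ and lifting the minimal polynomial of $x$ over $k(y)$ to a monic polynomial over $K_{N-1}$ yields a flat correspondence $[\Gamma]\in c_{K_{N-1}}(\Spec K_{N-1},\Spec K_N)$ with $[x]=[\Gamma]\circ[y]$. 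Then $[y]^*$ commutes with $\theta$ by the induction hypothesis on $N$, and $[\Gamma]^*$ by Lemma~\ref{theta_transfer_1} applied over the base $K_{N-1}$. The induction runs on the \emph{correspondence}, not on the element, and no normal-form analysis of $W_n\Omega^q$ is needed at all.

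Your strategy instead decomposes elements of $W_n\Omega^q_{A_N}$. The stability properties you claim are correct, but they only ensure that the traceable elements form a subgroup closed under $F$, $V$, $d$, coordinate-projection pullback, and multiplication by $\pi^*W_n\Omega^*_A$---\emph{not} multiplication by arbitrary elements of $W_n\Omega^*_{A_N}$. This subgroup need not be everything: already for $N=2$, $q=1$, $n=1$, the element $t_2\,dt_1\in\Omega^1_{A[t_1,t_2]}$ resists your reduction. Both variables occur, so coordinate projection does not apply; neither factor comes from $A$, so the projection formula does not apply; the Leibniz rule gives only $t_2\,dt_1+t_1\,dt_2=d(t_1t_2)$ without isolating either summand; and the Frobenius identity is of no help since the spectator $t_2$ sits outside the differentiated factor. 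Your proposed fix (``trade the monomial for an extra differential at the cost of lowering $q$ or the number of spectator variables'') does not terminate: any rewriting that eliminates the spectator $t_2$ either raises the degree or produces the symmetric term $t_1\,dt_2$ with the roles of $t_1,t_2$ swapped, and neither move decreases your inductive invariant. The missing idea is precisely the reduction to a field followed by the geometric factoring of $\alpha$ through $\Spec K_{N-1}$.
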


\begin{proof}
	Since the target of $\theta$ has global injectivity, we may assume that $A=K$ where $K$ is a function field.
	We proceed by induction on $N$, the case $N=1$ being Lemma \ref{theta_transfer_1}.
	We may assume that $\alpha = [x]$ where $x$ is a closed point of $\mathbb{A}^N_K$.
	Let $\pi\colon \mathbb{A}^N_K\to \mathbb{A}^{N-1}_K$ be the canonical projection and set $y= \pi(x)$.
	Then $[y]$ defines an element of $\Cor_K(\Spec K, \mathbb{A}^{N-1}_K)$.
	Since $x$ is a closed point of $\mathbb{A}^1_{k(y)}$, it is defined by some monic equation
	$$
		t_N^m+a_{m-1}t_N^{m-1}+\dots+a_0=0
	$$
	where $a_0,\dots,a_{m-1}\in k(y)$.
	Let $b_i$ be a lift of $a_i$ to $K_{N-1}$ for $i=0,\dots,m-1$.
	Define $\Gamma\subset \mathbb{A}^N_K$ by the equation
	$$
		t_N^m+b_{m-1}t_N^{m-1}+\dots+b_0=0.
	$$
	Then $[\Gamma]$ defines an element of $\Cor_{\mathbb{A}^{N-1}_K}(\mathbb{A}^{N-1}_K,\mathbb{A}^N_K)$ with $[x]=[\Gamma]\circ[y]$.
	Therefore the claim follows from the induction hypothesis and Lemma \ref{theta_transfer_1}.
\end{proof}

\begin{lemma}\label{theta_transfer_3}
	For any $X,Y\in \Sm_k$ and $\alpha\in \Cor_k(X,Y)$, the following diagram is commutative:
	$$
	\xymatrix{
		W_n\Omega^q(Y)\ar[r]^-{\theta}\ar[d]^-{\alpha^*}		&a_\Nis h_0(W_n^+\otimes\mathbb{G}_m^{+\otimes q})(Y)\ar[d]^-{\alpha^*}\\
		W_n\Omega^q(X)\ar[r]^-{\theta}		&a_\Nis h_0(W_n^+\otimes\mathbb{G}_m^{+\otimes q})(X).
	}
	$$
	In other words, $\theta$ is a morphism in $\Sh_\Nis(\Cor_k)$.
\end{lemma}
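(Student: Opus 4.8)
The plan is to deduce the general case from Lemma \ref{theta_transfer_2} by a dévissage in two moves: contract the source $X$ of the correspondence to its generic point, and then cover the image of $\alpha$ by affine charts of $Y$ that embed into affine spaces. First I would reduce to the case $X=\Spec K$ with $K$ a field. Since $h_0(W^+_n\otimes\mathbb{G}_m^{+\otimes q})$ lies in $\RSC_S$ by Lemma \ref{omega_shriek_CI_is_RSC} and $S$ is reduced, Corollary \ref{global_inj} shows that $a_\inj a_\Nis h_0(W^+_n\otimes\mathbb{G}_m^{+\otimes q})$ has global injectivity. We may assume $X$ is connected, hence integral since $X$ is regular over the regular scheme $S$; let $\eta$ be its generic point, $K=k(\eta)$, and $j\colon\Spec K\to X$ the canonical morphism, which is pro-smooth over $S$. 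By global injectivity it is enough to prove the identity $\theta(\alpha^*b)=\alpha^*\theta(b)$ after applying $j^*$. As $\theta$ is a morphism of presheaves (extended to pro-objects) it commutes with $j^*$, and $j^*\circ\alpha^*=\beta^*$ for $\beta:=\alpha\circ j\in c_S(\Spec K,Y)$; so the task becomes $\theta(\beta^*b)=\beta^*\theta(b)$, with both sides evaluated on the pro-smooth $S$-scheme $\Spec K$.

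Next, using $c_S(\Spec K,Y)=c_0(Y_K/\Spec K)=c_K(\Spec K,Y_K)$ with $Y_K:=Y\times_S\Spec K$ regular, Lemma \ref{lem:regular_z_universal} writes $\beta=\sum_i m_i[V_i]$ with each $V_i\subset Y_K$ integral, closed, and finite over $\Spec K$, hence a single point $v_i$ with $k(v_i)/K$ finite. Let $y_i\in Y$ be the image of $v_i$, choose an affine open $U_i\subset Y$ containing $y_i$, and set $U_{i,K}:=U_i\times_S\Spec K$; this is an affine open of $Y_K$ containing $v_i$ and of finite type over $K$, so it admits a closed immersion $\iota_i\colon U_{i,K}\hookrightarrow\mathbb{A}^{N_i}_K$. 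Writing $\jmath_i$ for the open immersion $U_{i,K}\hookrightarrow Y_K$, we have $[V_i]=\jmath_i\circ[V_i]'$ with $[V_i]'\in c_K(\Spec K,U_{i,K})$, and $\widetilde{V}_i:=\iota_i\circ[V_i]'=(\iota_i)_*[V_i]'$ belongs to $c_K(\Spec K,\mathbb{A}^{N_i}_K)=c_K(\Spec K,\Spec K[t_1,\dots,t_{N_i}])$ by Lemma \ref{lem:composition_formula}, which is exactly the setting of Lemma \ref{theta_transfer_2}.

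The one genuinely new input is that $\iota_i^*\colon W_n\Omega^q(\mathbb{A}^{N_i}_K)\to W_n\Omega^q(U_{i,K})$ is surjective: $W_n\Omega^q_{(-)}$ is a quotient of $\Omega^q_{W_n(-)}$, and both $W_n(-)$ (represented by an affine scheme) and $\Omega^q_{(-)}$ carry surjections of rings to surjections. Hence we may lift $b|_{U_{i,K}}$ to some $c_i\in W_n\Omega^q(\mathbb{A}^{N_i}_K)$ with $\iota_i^*c_i=b|_{U_{i,K}}$. Using $[V_i]^*=([V_i]')^*\jmath_i^*$, $\widetilde{V}_i^*=([V_i]')^*\iota_i^*$, the naturality of $\theta$ along $\iota_i$ and $\jmath_i$, and Lemma \ref{theta_transfer_2} applied with $A=K$ (which its proof covers, a base field being the generic case there; alternatively one passes to the colimit over affine opens of $X$), one computes
\begin{align*}
\theta([V_i]^*b) &= \theta(\widetilde{V}_i^*c_i) = \widetilde{V}_i^*\theta(c_i) = ([V_i]')^*\iota_i^*\theta(c_i)\\
&= ([V_i]')^*\theta(\iota_i^*c_i) = ([V_i]')^*\theta(b|_{U_{i,K}}) = ([V_i]')^*\jmath_i^*\theta(b) = [V_i]^*\theta(b).
\end{align*}
Summing over $i$ yields $\theta(\beta^*b)=\beta^*\theta(b)$, which together with the first step proves the lemma.

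I expect the main obstacle to be not the computation but the bookkeeping: justifying the reduction to the generic point, keeping track of the compatibility of $\theta$, $h_0$, $a_\Nis$ and $a_\inj$ with the pro-smooth base change $\Spec K\to S$, and confirming that Lemma \ref{theta_transfer_2} is available in the precise generality (base a pro-smooth field over $S$, with de Rham-Witt transfers inherited from $S$) that is being used. All of the delicate de Rham-Witt bookkeeping — the Leibniz rule, the behaviour of $V$, $F$ and $d$, the case analysis on the standard generators — has already been carried out in Lemmas \ref{theta_transfer_1} and \ref{theta_transfer_2}, so this final step is essentially a formal globalisation.
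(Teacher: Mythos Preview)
Your argument is correct, but the paper takes a shorter route that avoids the generic-point reduction entirely. The paper factors an arbitrary $\alpha\in c_S(X,Y)$ as
\[
X\xrightarrow{(\id,\alpha)}X\times_S Y\xrightarrow{\pr_2}Y,
\]
with $\pr_2$ an honest morphism (so compatibility with $\theta$ is automatic) and $(\id,\alpha)\in c_X(X,X\times_S Y)$. This reduces at once to the case $Y\in\Sm_X$ and $\alpha\in c_X(X,Y)$. Since $\theta$ is a morphism of Nisnevich sheaves one may then localise to $X=\Spec A$, $Y=\Spec B$ affine with $B$ of finite type over $A$, embed $\Spec B\hookrightarrow\Spec A_N$, and invoke Lemma~\ref{theta_transfer_2} exactly as stated---with $\Spec A\in\Sm_S^{\aff}$, no pro-smooth extension required.

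Your route reaches the same endpoint (surjectivity of $W_n\Omega^q$ along a closed immersion into affine space, then Lemma~\ref{theta_transfer_2}) but pays for the generic-point reduction with the bookkeeping you yourself flag: one must check that Lemma~\ref{theta_transfer_2} and, underneath it, Lemma~\ref{theta_transfer_1} remain valid when $A=K$ is merely a field pro-smooth over $S$, and that $\theta$, $h_0$, $a_\Nis$, $a_\inj$ are all compatible with the pro-smooth pullback $j^*$. These checks go through, but the graph-factorisation trick makes them unnecessary.
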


\begin{proof}
	The finite correspondence $\alpha$ can be factored as
	$$
		X\xrightarrow{(\id,\alpha)}X\times Y\xrightarrow{\pr_2} Y.
	$$
	Therefore we may assume that $Y\in \Sm_X$ and $\alpha\in \Cor_X(X,Y)$.
	Since $\theta$ is a morphism of Nisnevich sheaves, we may assume that $X=\Spec A$, $Y=\Spec B$.
	Take a closed immersion $\Spec B\to \Spec A_N$ where $A_N=A[t_1,\dots,t_N]$.
	Since $W_n\Omega(A_N)\to W_n\Omega(B)$ is surjective, we may assume that $B=A_N$, so the claim follows from Lemma \ref{theta_transfer_2}.
\end{proof}

\subsection{Comparison}

\begin{definition}
	Consider the sections
	$$
		[t]\colon \mathbb{Z}_\tr(\mathbb{A}^1\setminus\{0\})\to W_n,\quad t\colon \mathbb{Z}_\tr(\mathbb{A}^1\setminus\{0\})\to \mathbb{G}_m
	$$
	where $t$ is the coordinate function of $\mathbb{A}^1\setminus\{0\}$.
	One can easily see that these morphisms descend to
	$$
		[t]\colon \omega_!W^+_n\to W_n, \quad t\colon \omega_!\mathbb{G}^+_m \to \mathbb{G}_m.
	$$
	Taking a tensor product and composing with
	$$
		W_n\otimes\mathbb{G}_m^{\otimes q}\to W_n\Omega^q;\quad x\otimes a_1\otimes\dots\otimes a_q\mapsto x\star\dlog[a_1]\star\dots\star\dlog[a_q],
	$$
	we get a morphism
	$$
		\eta\colon \omega_!(W_n^+\otimes \mathbb{G}_m^{+\otimes q})\to W_n\Omega^q
	$$
	in $\PSh(\Cor_k)$.
	In other words, $\eta$ is the morphism induced by the section
	$[t_0]\star \dlog[t_1]\star \cdots\star \dlog[t_q]$ of $W_n\Omega^q$ on $(\mathbb{A}^1\setminus\{0\})^{q+1}$, where $t_0,\dots,t_q$ are the coordinate functions of $(\mathbb{A}^1\setminus\{0\})^{q+1}$.
\end{definition}

\begin{lemma}
	The morphism $\eta\colon \omega_!(W_n^+\otimes \mathbb{G}_m^{+\otimes q})\to W_n\Omega^q$ descends to $h_0 (W_n^+\otimes \mathbb{G}_m^{+\otimes q})$.
\end{lemma}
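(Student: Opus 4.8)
The plan is to pass to the adjoint morphism and factor it through the cube-invariant part of $\omega^{*}W_{n}\Omega^{q}$. Write $\mathcal{F}:=W_{n}^{+}\otimes\mathbb{G}_{m}^{+\otimes q}$. Since $\omega_{!}$ is exact with $\omega_{!}\omega^{*}\simeq\mathrm{id}$ (Proposition \ref{omega_star_and_shriek}), the canonical epimorphism $\omega_{!}\mathcal{F}\twoheadrightarrow h_{0}\mathcal{F}=\omega_{!}\Lcube\mathcal{F}$ is $\omega_{!}$ applied to $\mathcal{F}\twoheadrightarrow\Lcube\mathcal{F}$, and by the adjunction $\omega_{!}\dashv\omega^{*}$ the morphism $\eta$ descends along it if and only if its adjoint $\widetilde{\eta}\colon\mathcal{F}\to\omega^{*}W_{n}\Omega^{q}$ factors through $\mathcal{F}\twoheadrightarrow\Lcube\mathcal{F}$. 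Because $h^{0}W_{n}\Omega^{q}=\Rcube\omega^{*}W_{n}\Omega^{q}$ is cube-invariant (Lemma \ref{properties_of_upper_h0}), Lemma \ref{properties_of_lower_h0}(2) reduces this to showing that $\widetilde{\eta}$ takes values in the subpresheaf $h^{0}W_{n}\Omega^{q}\hookrightarrow\omega^{*}W_{n}\Omega^{q}$; equivalently, that for every $\mathcal{X}\in\QSm_{S}^{\prop}$ every section of $\mathcal{F}$ over $\mathcal{X}$ is carried by $\widetilde{\eta}$ to a section of $W_{n}\Omega^{q}$ admitting $\mathcal{X}$ as a modulus (Lemma \ref{reciprocity_equivalent}).

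Next I would reduce to generators. By construction $\mathcal{F}$ is a quotient of $\varinjlim_{\vec{\varepsilon}}\mathbb{Z}_{\tr}(\mathcal{Y}_{\vec{\varepsilon}})\otimes\mathbb{Z}_{(p)}$, where, for $\vec{\varepsilon}=(\varepsilon_{0},\dots,\varepsilon_{q})\in\mathbb{Q}_{>0}^{q+1}$, $\mathcal{Y}_{\vec{\varepsilon}}:=(\mathbb{P}^{1}_{S},\varepsilon_{0}\{0\}+(p^{n-1}+\varepsilon_{0})\{\infty\})\boxtimes(\mathbb{P}^{1}_{S},\varepsilon_{1}\{0\}+\varepsilon_{1}\{\infty\})\boxtimes\cdots\boxtimes(\mathbb{P}^{1}_{S},\varepsilon_{q}\{0\}+\varepsilon_{q}\{\infty\})$, using $\mathbb{Z}_{\tr}(\mathcal{A})\otimes\mathbb{Z}_{\tr}(\mathcal{B})\simeq\mathbb{Z}_{\tr}(\mathcal{A}\boxtimes\mathcal{B})$. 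In particular the representables $\mathbb{Z}_{\tr}(\mathcal{Y}_{\vec{\varepsilon}})$ generate $\mathcal{F}$, and since $h^{0}W_{n}\Omega^{q}$ is a subpresheaf it is enough to prove the assertion on these generators: for each $\vec{\varepsilon}$ the section of $W_{n}\Omega^{q}$ over $Y_{\vec{\varepsilon}}^{\circ}=\mathbb{G}_{m,S}^{q+1}$ classified by $\mathbb{Z}_{\tr}(\mathcal{Y}_{\vec{\varepsilon}})\to\mathcal{F}\xrightarrow{\widetilde{\eta}}\omega^{*}W_{n}\Omega^{q}$ admits $\mathcal{Y}_{\vec{\varepsilon}}$ as a modulus. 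By the definition of $\eta$ this section is $[t_{0}]\star\dlog[t_{1}]\star\cdots\star\dlog[t_{q}]$, where $t_{0},\dots,t_{q}$ are the coordinates of $\mathbb{G}_{m,S}^{q+1}$.

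For the last step, note that $[t_{0}]\star\dlog[t_{1}]\star\cdots\star\dlog[t_{q}]$ is the image of the external product $[t_{0}]\boxtimes t_{1}\boxtimes\cdots\boxtimes t_{q}\in(W_{n}\otimes\mathbb{G}_{m}^{\otimes q})(\mathbb{G}_{m,S}^{q+1})$ under the transfer-compatible pairing $W_{n}\otimes\mathbb{G}_{m}^{\otimes q}\to W_{n}\Omega^{q}$, $x\otimes a_{1}\otimes\cdots\otimes a_{q}\mapsto x\star\dlog[a_{1}]\star\cdots\star\dlog[a_{q}]$, so by functoriality of $h^{0}$ it suffices that this external product admits $\mathcal{Y}_{\vec{\varepsilon}}$ as a modulus. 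Here $[t_{0}]\in W_{n}(\mathbb{G}_{m,S})$ admits $(\mathbb{P}^{1}_{S},\varepsilon_{0}\{0\}+(p^{n-1}+\varepsilon_{0})\{\infty\})$ as a modulus, since by Corollary \ref{cor:p_typical_comparison} the classifying map $\mathbb{Z}_{\tr}(\mathbb{G}_{m,S})\to W_{n}$ factors through $h_{0}W_{n}^{+}$ and hence, unwinding the definitions of $W_{n}^{+}$ and $\widehat{\mathbb{W}}{}^{+}_{p^{n-1}}$, through $h_{0}$ of that modulus pair; and each $t_{i}\in\mathbb{G}_{m}(\mathbb{G}_{m,S})$ admits $(\mathbb{P}^{1}_{S},\varepsilon_{i}\{0\}+\varepsilon_{i}\{\infty\})$ as a modulus, because $\mathbb{G}_{m}$ is $\mathbb{A}^{1}$-invariant and a section of an $\mathbb{A}^{1}$-invariant sheaf admits every modulus pair with the correct interior (Lemma \ref{a1_invariance_implies_reciprocity} and its proof). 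The conclusion is then an instance of the compatibility of moduli with box products: if $a_{j}\in F_{j}(U_{j})$ admits a modulus $\mathcal{U}_{j}$, then $a_{0}\boxtimes\cdots\boxtimes a_{q}$ admits $\mathcal{U}_{0}\boxtimes\cdots\boxtimes\mathcal{U}_{q}$. I expect this last point to be the main obstacle: the reductions above are formal, but the compatibility of moduli with box products genuinely uses the monoidal structure of the theory, and I would establish it from the lax symmetric monoidal structure on $h_{0}$ (Lemma \ref{lem:h0_tensor} and the remark following it), or — using the global injectivity of $W_{n}\Omega^{q}$ — by reducing to the case where $S$ is the spectrum of a field and invoking the classical bound on the conductor of a product.
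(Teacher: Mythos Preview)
Your formal reductions in the first two paragraphs are correct: by adjunction the claim is equivalent to $\widetilde{\eta}$ landing in $h^{0}W_{n}\Omega^{q}$, and since the representables $\mathbb{Z}_{\tr}(\mathcal{Y}_{\vec{\varepsilon}})$ generate $\mathcal{F}$, this amounts to showing that the section $[t_{0}]\star\dlog[t_{1}]\star\cdots\star\dlog[t_{q}]$ of $W_{n}\Omega^{q}$ on $\mathbb{G}_{m,S}^{q+1}$ admits $\mathcal{Y}_{\vec{\varepsilon}}$ as a modulus. This is exactly the paper's starting point.

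The gap is in the last paragraph. The claim ``if each $a_{j}$ admits modulus $\mathcal{U}_{j}$, then $a_{0}\boxtimes\cdots\boxtimes a_{q}$ admits $\boxtimes_{j}\mathcal{U}_{j}$'' is \emph{not} a formal consequence of the lax monoidal structure on $h_{0}$. What Lemma \ref{lem:h0_tensor} and the remark after it give is an \emph{epimorphism} $\bigotimes_{j}h_{0}(\mathcal{U}_{j})\twoheadrightarrow h_{0}(\boxtimes_{j}\mathcal{U}_{j})$; to show that your map $\bigotimes_{j}h_{0}(\mathcal{U}_{j})\to W_{n}\Omega^{q}$ factors through that quotient you must show it annihilates the kernel, i.e.\ that it kills all $\bcube$-homotopies on the box product --- and this is precisely the content of the lemma, not a tool that proves it. Your fallback of ``invoking the classical bound on the conductor of a product'' points in the right direction, but there is no off-the-shelf statement in the needed generality; establishing it for this particular section is exactly the computation the paper carries out.

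The paper proceeds by an explicit residue argument. After reducing (via global injectivity) to $X=\Spec K$ with $K$ a field pro-smooth over $S$, an admissible correspondence $\alpha$ from $\Spec K\boxtimes(\mathbb{P}^{1},\{1\})$ to $\mathcal{Y}_{\vec{\varepsilon}}$ is represented by an integral curve $\Gamma$, and Lemma \ref{principal_divisor_vs_homotopy} expresses $(\partial\alpha)^{*}\eta$ as a sum of traces along $\div(g)$ on the normalization $C$. R\"ulling's residue symbol for the de Rham-Witt complex rewrites this as $\sum_{x\in C^{(1)}}\Res_{C/K,x}\bigl([f_{0}]\star\dlog[f_{1}]\star\cdots\star\dlog[f_{q}]\star\dlog[g]\bigr)$; the residue theorem makes the sum over the regular compactification $\overline{C}$ vanish, and a local lemma (proved immediately afterwards, using that admissibility forces $v(g-1)>-p^{n-1}v(f_{0})$ at each point of $\overline{C}\setminus C$) shows the integrand is regular there, so no boundary contributions arise. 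This local computation is where the modulus weight $p^{n-1}+\varepsilon$ at $\infty$ is actually consumed, and in the paper's framework there is no structural shortcut around it.
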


Note that $\eta$ further factors through $a_\Nis h_0 (W_n^+\otimes \mathbb{G}_m^{+\otimes q})$ since $W_n\Omega^q$ is a Nisnevich sheaf.

\begin{proof}
	We identify $\eta$ with the corresponding section of $W_n\Omega^q$ on $(\mathbb{A}^1\setminus\{0\})^{q+1}$.
	Fix $\varepsilon\in \mathbb{Q}_{>0}$.
	Let $\mathcal{W}=(\mathbb{P}^1,\varepsilon[0]+(p^{n-1}+\varepsilon)[\infty])$ and $\mathcal{G}=(\mathbb{P}^1,\varepsilon[0]+\varepsilon[\infty])$.
	It suffices to show that for any connected $X\in \Sm_k$ and
	$$
		\alpha\in \Cor_k(X\otimes (\mathbb{P}^1,\{1\}), \mathcal{W}\otimes \mathcal{G}^{\otimes q}),
	$$
	we have $(i_0^*-i_\infty^*)\alpha^*\eta=0 \in W_n\Omega^q(X)$.
	Since the homomorphism $W_n\Omega^q(X)\to W_n\Omega^q(k(X))$ is injective, we may assume that $X=\Spec K$ where $K$ is a function field.
	Moreover, we may assume that $\alpha=[\Gamma]$ for some integral closed subscheme $\Gamma\subset \square_K\times_K(\mathbb{A}^1_K\setminus\{0\})^{q+1}$ which is finite surjective over $\square_K$.
	Let $C$ denote the normalization of $\Gamma$.
	Let $g\colon C\to \square_K$ and $(f_0,\dots,f_q)\colon C\to (\mathbb{A}^1_K\setminus\{0\})^{q+1}$ denote the canonical projections.
	Then $\alpha\circ(i_0-i_\infty)$ is equal to the composite
	$$
		\Spec K\xrightarrow{\div(g)} C\xrightarrow{(f_0,\dots,f_q)} (\mathbb{A}^1_K\setminus\{0\})^{q+1}.
	$$
	Therefore it suffices to show that
	$$
		\textstyle\sum_{x\in C^{(1)}} n_x\Tr_{K(x)/K}([f_0]\star\dlog[f_1]\star\dots\star \dlog[f_q])=0
	$$
	where $\div(g)=\sum_{x\in C^{(1)}} n_x\{x\}$.
	Using the residue map (\cite{Rulling-thesis}, \cite{Rulling-erratum}), we can rewrite the left hand side as
	$$
		\textstyle\sum_{x\in C^{(1)}} \Res_{C/K,x}([f_0]\star\dlog[f_1]\star\dots\star \dlog[f_q]\star \dlog[g]).
	$$
	By the residue theorem (\cite[Theorem 2.19]{Rulling-thesis}, \cite[Theorem 2]{Rulling-erratum}), the above sum is zero if we replace $C$ with the regular projective model $\overline{C}$ of $C$.
	Therefore it suffices to show that $[f_0]\star\dlog[f_1]\star\dots\star \dlog[f_q]\star \dlog[g]$ is regular at every $x\in \overline{C}\setminus C$.
	Let $\pi$ be a local parameter at $x$ and write $f_i=u_i\pi^{m_i}$ and $g=1+u\pi^m$ where $u_i,u\in \mathcal{O}_{\overline{C},x}^\times$.
	Note that we always have $m>0$.
	The admissibility of $\Gamma$ implies
	$$
		m\geq \varepsilon \max\{m_0,0\}+(p^{n-1}+\varepsilon)\max\{-m_0,0\}+\varepsilon (|m_1|+\cdots+|m_q|)>-p^{n-1}m_0.
	$$
	Hence the result follows from the next lemma.
\end{proof}

\begin{lemma}
	Let $(K,v)$ be a henselian discrete valuation field of characteristic $p$, $\mathcal{O}_K$ be its valuation ring and $\mathfrak{m}_K$ be the maximal ideal of $\mathcal{O}_K$.
	Let $f_0,\dots,f_q\in K^\times$, $a\in \mathfrak{m}_K$ and assume that $v(a)>-p^{n-1}v(f_0)$.
	Then we have $[f_0]\star\dlog[f_1]\star\cdots\star\dlog[f_q]\star\dlog[1+a]\in W_n\Omega^q(\mathcal{O}_K)$.
\end{lemma}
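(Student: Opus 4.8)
The plan is to reduce this to an explicit computation with K\"ahler differentials and then to climb the de Rham--Witt tower by induction on $n$. Fix a uniformizer $\pi$ of $\mathcal{O}_K$, put $m_i = v(f_i)$ and write $f_i = u_i\pi^{m_i}$ with $u_i\in\mathcal{O}_K^\times$; set $m = v(a)\ge 1$, so that the hypothesis reads $m + p^{n-1}m_0\ge 1$. We must show that the element $\omega := [f_0]\star\dlog[f_1]\star\cdots\star\dlog[f_q]\star\dlog[1+a]\in W_n\Omega^\bullet(K)$ lies in the image of $W_n\Omega^\bullet(\mathcal{O}_K)$. Since the Teichm\"uller lift is multiplicative, $[f_i] = [u_i]\star[\pi]^{\star m_i}$ is a unit in $W_n(K)$ and $\dlog[f_i] = \dlog[u_i] + m_i\dlog[\pi]$; likewise $1+a\in\mathcal{O}_K^\times$, so $\dlog[1+a]\in W_n\Omega^1(\mathcal{O}_K)$. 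Expanding the product, using $\dlog[\pi]\star\dlog[\pi] = 0$ (valid because $p\ne 2$ makes $2$ a unit, hence any degree-one form squares to zero), and absorbing the regular factors $[u_0]$, $\dlog[u_i]$, $\dlog[1+a]$ into the graded subring $W_n\Omega^\bullet(\mathcal{O}_K)$, one reduces to the two containments
$$
[\pi]^{\star m_0}\star\dlog[1+a]\in W_n\Omega^1(\mathcal{O}_K)
\qquad\text{and}\qquad
[\pi]^{\star m_0}\star\dlog[\pi]\star\dlog[1+a]\in W_n\Omega^2(\mathcal{O}_K),
$$
under the assumption $m + p^{n-1}m_0\ge 1$.

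If $m_0\ge 1$ then $[\pi]^{\star m_0}\star\dlog[\pi] = [\pi]^{\star(m_0-1)}\star d[\pi]$ is regular, and if $m_0\ge 0$ then $[\pi]^{\star m_0}$ is regular; so both containments are clear unless $m_0\le 0$, which I assume from now on. For $n = 1$ one has $W_1\Omega^\bullet = \Omega^\bullet$ and $[x] = x$: writing $a = v\pi^m$ gives $da = mv\pi^{m-1}d\pi + \pi^m dv\in\mathfrak{m}_K^{m-1}\Omega^1(\mathcal{O}_K)$, and since the $d\pi$-component of $da$ is annihilated upon wedging with $\dlog\pi$, the forms $\tfrac{\pi^{m_0}}{1+a}\,da$ and $\pi^{m_0-1}d\pi\wedge\tfrac{da}{1+a} = \tfrac{\pi^{m_0+m-1}}{1+a}\,d\pi\wedge dv$ have coefficients of valuation $\ge m_0 + m - 1\ge 0$, hence are regular. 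In fact this proves the sharper statement that $[\pi]^{\star(-\ell)}\star\dlog[1+a]$ and $[\pi]^{\star(-\ell)}\star\dlog[\pi]\star\dlog[1+a]$ are regular over $\mathcal{O}_K$ whenever $\ell\ge 0$ and $m\ge \ell+1$.

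For general $n$ I would induct using the $V$-adic filtration of $W_n\Omega^\bullet$, whose $j$-th stage is generated by $V^j$ and $dV^j$ applied to $W_{n-j}\Omega^\bullet$, and whose associated graded pieces are --- up to the operators $R,F,V,d$ --- controlled by de Rham complexes over $\mathcal{O}_K$ and over $K$ (Illusie--Raynaud; Hesselholt--Madsen). Since $\dlog$ commutes with $R$, the restriction $R\omega$ is the analogous element at level $W_{n-1}$, regular by the inductive hypothesis (its requirement $m > -p^{n-2}m_0$ is weaker than ours), so only the top graded layer $\gr^{n-1}$ needs attention. Because $\dlog$ also commutes with $F$, the iterated Frobenius $F^{n-1}\omega$ is the analogous element with $f_0$ replaced by $f_0^{p^{n-1}}$, so the $W_1$-estimate of the previous paragraph that governs this layer is the one with $m_0$ replaced by $p^{n-1}m_0$, namely $m + p^{n-1}m_0\ge 1$ --- precisely the hypothesis. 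Running this through all the graded pieces (the lower ones needing only the weaker conditions $m + p^j m_0\ge 1$, automatic since $m_0\le 0$), each piece of $\omega$ lands in the corresponding piece of $W_n\Omega^\bullet(\mathcal{O}_K)$, so $\omega$ is regular.

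The step I expect to be the main obstacle is this inductive passage: making the identification of the graded pieces of the $V$-adic filtration with quotients of de Rham forms precise enough to see how the polar Teichm\"uller factor $[\pi]^{\star m_0}$ interacts with $V^j$ and $dV^j$, so that the scaling $m_0\rightsquigarrow p^j m_0$ appears exactly as claimed --- and to do so uniformly for a $\mathbb{Z}_{(p)}$-algebra discrete valuation ring, which a priori may be of equicharacteristic $0$ or $p$ or of mixed characteristic. Since the application only uses the equicharacteristic $p$ case (where $S\in\Sm_k$ with $k$ of characteristic $p$), one could first reduce to that situation; otherwise one invokes the Hesselholt--Madsen structure theory, which is available over $\mathbb{Z}_{(p)}$ uniformly.
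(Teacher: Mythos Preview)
Your initial reduction---expanding $\dlog[f_i]=\dlog[u_i]+m_i\dlog[\pi]$, using $\dlog[\pi]\star\dlog[\pi]=0$, and reducing to the two model forms $[\pi]^{\star m_0}\star\dlog[1+a]$ and $[\pi]^{\star m_0}\star\dlog[\pi]\star\dlog[1+a]$---is correct and clean, and your $n=1$ computation is fine. The divergence from the paper is in how one climbs from $n=1$ to general $n$.

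The paper does \emph{not} induct on $n$ via the $V$-adic filtration. Instead it invokes a lemma of R\"ulling (\cite[Lemma~3.4]{Rulling-thesis}) giving an explicit decomposition
\[
[1+a]=[1]+\sum_{i=0}^{n-1}V^i\bigl([\pi^m]\star c_i\bigr),\qquad c_i\in W_n(\mathcal{O}_K),
\]
so that $d[1+a]=\sum_i dV^i([\pi^m]\star c_i)$. After absorbing the regular unit $[1+a]^{-1}$, it suffices to show that each $[f_0]\star\dlog[f_1]\star\cdots\star\dlog[f_q]\star dV^i([\pi^m]\star c_i)$ is regular. One Leibniz step reduces this to the two terms obtained by replacing $dV^i$ by $V^i$ and (via $d[f_0]=[f_0]\star\dlog[f_0]$) by $[f_0]\star\dlog[f_0]\star V^i$. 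Now the projection formula $x\star V^i(y)=V^i(F^i(x)\star y)$ collapses everything: $[f_0]\star V^i([\pi^m]\star c_i)=V^i\bigl([u_0^{p^i}\pi^{m+p^i m_0}]\star c_i\bigr)$, and $m+p^i m_0\ge 1$ for every $i\le n-1$ is exactly the hypothesis (when $m_0\le 0$; the case $m_0>0$ is trivial). This is what makes the exponent $p^{n-1}$ appear.

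Your inductive sketch, by contrast, has a real gap that you correctly flag. Knowing that $R\omega$ is regular lets you write $\omega=\omega'+\eta$ with $\omega'\in W_n\Omega^\bullet(\mathcal{O}_K)$ and $\eta\in\ker R=V^{n-1}W_1\Omega^\bullet+dV^{n-1}W_1\Omega^\bullet$ over $K$; you then want to detect regularity of $\eta$ via $F^{n-1}$. But $F^{n-1}V^{n-1}=p^{n-1}$, which vanishes in characteristic $p$, so $F^{n-1}$ kills the $V^{n-1}$-part of $\eta$ and cannot by itself certify that this part comes from $\mathcal{O}_K$. More generally, checking regularity on each associated graded $\gr^j$ does not lift to regularity in $W_n\Omega^\bullet$ without an explicit control of how the element sits in the filtration---and that control is precisely what R\"ulling's decomposition of $[1+a]$ supplies. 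In other words, the lemma you would need to make your induction go through is essentially the input the paper uses at the outset; once you have it, the projection-formula computation is shorter than the filtration bookkeeping.
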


\begin{proof}
	Let $\pi$ be a uniformizer of $K$.
	Write $f_i=u_i\pi^{m_i}$ and $a=u\pi^m$ where $u_i, u\in \mathcal{O}_K^\times$.
	Then we have $m>-p^{n-1}m_0$.
	By \cite[Lemma 3.4]{Rulling-thesis}, we can write
	\begin{align*}
		[1+a]&=[1]+\textstyle\sum_{i=0}^{n-1}V^i([\pi^m]\star c_i)
	\end{align*}
	where $c_i\in W_n(\mathcal{O}_K)$.
	Therefore it suffices to show that
	$$
		[f_0]\star\dlog[f_1]\star\cdots\star\dlog[f_q]\star dV^i([\pi^m]\star c_i)
	$$
	is regular for $i=0,1,\dots,n-1$.
	By the Leibniz rule, it suffices to show that
	\begin{enumerate}
		\item	$[f_0]\star\dlog[f_1]\star\cdots\star\dlog[f_q]\star V^i([\pi^m]\star c_i)$,
		\item	$d[f_0]\star\dlog[f_1]\star\cdots\star\dlog[f_q]\star V^i([\pi^m]\star c_i)$
	\end{enumerate}
	are regular.
	For (1), we have
	\begin{align*}
		&[f_0]\star\dlog[f_1]\star\cdots\star\dlog[f_q]\star V^i([\pi^m]\star c_i)\\
		=&V^i(F^i[f_0]\star\dlog[f_1]\star\cdots\star\dlog[f_q]\star [\pi^m]\star c_i)\\
		=&V^i([u_0^{p^i}\pi^{m+p^im_0}]\star\dlog[f_1]\star\cdots\star\dlog[f_q]\star c_i)
	\end{align*}
	and hence the claim follows from $m>-p^im_0$.
	For (2), we have
	$d[f_0]=[f_0]\star \dlog [f_0]$
	and hence the claim follows from the regularity of (1).
\end{proof}

\begin{lemma}\label{eta_compatibility}
	The morphism $\eta\colon h_0 (W^+_n\otimes \mathbb{G}_m^{+\otimes q})\to W_n\Omega^q$ is compatible with $\star$, $d$, $F$, $V$, and $\lambda$.
\end{lemma}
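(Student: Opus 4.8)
The plan is to reduce every one of the five compatibilities to an explicit identity among $\dlog$-monomials over an algebraically closed field, where it becomes a one-line consequence of the defining relations of the de Rham--Witt complex together with Corollary~\ref{cor:p_typical_comparison} and Lemmas~\ref{Gm_anti_commutative}, \ref{delta_and_Verschiebung}, \ref{delta_Leibniz} and \ref{FdV}.

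First I would note that $\eta$ and all of the operations $\star,d,F,V$ on its source and target are morphisms of presheaves with transfers: on the source $h_0(W^+_\bullet\otimes\mathbb{G}_m^{+\otimes *})$ they are constructed, through the definitions preceding this lemma, from morphisms in $\Cor_S$ and $\mathbb{Z}_{(p)}$-linear combinations of such; on the target they exist because $S$ admits de Rham--Witt transfers (the projection formula proved above being exactly what $\star$ requires). Thus each compatibility is an equality of two morphisms of presheaves with transfers, which may be checked on sections over $X\in\Sm_S^\aff$. Since $W_n\Omega^q$ has global injectivity and all of the data in sight commute with base change and with transfers, I would then repeat the reduction used in the proof of Theorem~\ref{thm:ab-comparison}: pass to the generic point of a connected $X$ and then to its algebraic closure, reducing to $X=\Spec K$ with $K$ algebraically closed. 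Over such a $K$ every finite correspondence into a punctured affine line is a $\mathbb{Z}$-combination of $K$-rational points, so $h_0(W^+_n\otimes\mathbb{G}_m^{+\otimes q})(K)$ is generated by the monomials $[a_0]\otimes[a_1]\otimes\cdots\otimes[a_q]$ with $a_i\in K^\times$; and chasing such a monomial through the quotients that define $W^+_n$ out of $\widehat{\mathbb{W}}^+_{p^{n-1}}$, one finds that $\eta$ sends it to $[a_0]\star\dlog[a_1]\star\cdots\star\dlog[a_q]\in W_n\Omega^q_K$ (with $[a_0]$ the Teichm\"uller lift). In weight zero $\eta$ is the map induced by $[t]\colon\omega_!W^+_n\to W_n$, which sends $[a]$ to $[a]$, hence coincides with $\varphi^{(p)}$ by the same reduction.

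With this description the five verifications are short. Compatibility with $\lambda$ means $\eta\circ\lambda=\id_{W_n}$, and since $\lambda=(\varphi^{(p)})^{-1}$ and the weight-zero part of $\eta$ is $\varphi^{(p)}$, this is Corollary~\ref{cor:p_typical_comparison}. For $\star$: $\eta$ is multiplicative on monomials because $\lambda$ is a ring homomorphism and $\star$ on $W_n\Omega^*_K$ is associative and graded-commutative, the graded sign being exactly the one produced by Lemma~\ref{Gm_anti_commutative}(4). For $F$ one uses that $F\dlog[a]=\dlog[a]$ and $F[a]=[a^p]$ in $W_n\Omega^*$ (the first from the relation $F(d\lambda[a])=\lambda[a^{p-1}]\star d\lambda[a]$), together with the $F$-compatibility of $\lambda$. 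For $V$ one uses $\dlog[a]=F(\dlog[a])$, the relation $V(x\star F(y))=V(x)\star y$, and the $V$-compatibility of $\lambda$. For $d$: on a monomial, lifted to the $\widehat{\mathbb{W}}^+_{p^{n-1}}$-level, the source differential (well defined by Lemma~\ref{delta_and_Verschiebung}) is the diagonal on the first factor and therefore sends $[a_0]\otimes[a_1]\otimes\cdots$ to $[a_0]\otimes[a_0]\otimes[a_1]\otimes\cdots$, whose image under $\eta$ is $[a_0]\star\dlog[a_0]\star\dlog[a_1]\star\cdots=d\lambda[a_0]\star\dlog[a_1]\star\cdots$; on the other hand $d(\eta([a_0]\otimes[a_1]\otimes\cdots))=d(\lambda[a_0]\star\dlog[a_1]\star\cdots)=d\lambda[a_0]\star\dlog[a_1]\star\cdots$ because $d\,\dlog[a]=0$. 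Here the hypothesis $p\neq2$ is used, as elsewhere in the section, to guarantee $d^2=0$ and $d\,\dlog[a]=0$ (equivalently $d\lambda[a]\star d\lambda[a]=0$); the Leibniz rule and the relation $FdV=d$ for the source differential are already recorded in Lemmas~\ref{delta_Leibniz} and \ref{FdV}.

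The main obstacle is bookkeeping rather than a new idea: setting up the reduction to $\dlog$-monomials over an algebraically closed field carefully (which is where global injectivity of $W_n\Omega^q$ and compatibility with transfers are needed, exactly as in Theorem~\ref{thm:ab-comparison}), and tracking an element faithfully through the chain of quotients ($/\mathbb{Z}$, $\otimes\mathbb{Z}_{(p)}$, and the quotient by $\sum_{\ell\neq p}\Im(\ell^{-1}V_\ell F_\ell)$) defining $W^+_n$, so that ``$\eta$ of a symbol'' is genuinely the claimed $\dlog$-monomial. The only genuinely substantive point is the weight-zero identification $\eta|_{q=0}=\varphi^{(p)}$, which reconciles the two a priori different constructions of the comparison map in weight zero; once it and the symbol description are in place, each of $\star,d,F,V,\lambda$ is a one-line check in $W_n\Omega^*_K$.
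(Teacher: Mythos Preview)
Your approach is correct, and for $\star$, $F$, $V$, $\lambda$ it is essentially what the paper does in disguise: the paper simply writes ``the claim for $\star,F,V,\lambda$ follows from Proposition~\ref{prop:ring-comparison}'', which unwinds to exactly your observation that these four operations act only through the $W_n^+$-factor, that the weight-zero piece of $\eta$ is $\varphi^{(p)}$, and that the $\dlog$ map $W_n\otimes\mathbb{G}_m^{\otimes q}\to W_n\Omega^q$ intertwines them (the key identity being $F\dlog[a]=\dlog[a]$ and the projection formula $V(x)\star y=V(x\star Fy)$, as you note). Your reduction to an algebraically closed field is already contained in the proof of Proposition~\ref{prop:ring-comparison}, so you are re-running that argument rather than invoking it.

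The genuine difference is in the treatment of $d$. You evaluate on monomials over an algebraically closed field, using global injectivity of $W_n\Omega^q$ to justify the reduction, and then compute $d\dlog[a]=0$ pointwise. The paper avoids this reduction entirely: since $\eta$ is, by construction, the morphism represented by the section $[t_0]\star\dlog[t_1]\star\cdots\star\dlog[t_q]$ of $W_n\Omega^q$ on $(\mathbb{A}^1_S\setminus\{0\})^{q+1}$, and since $d$ on the source is induced by the scheme map $\delta\colon(t_0,\dots,t_q)\mapsto(t_0,t_0,t_1,\dots,t_q)$, compatibility with $d$ is the single identity $\delta^*\eta = d\eta$ of sections over $(\mathbb{A}^1_S\setminus\{0\})^{q+1}$, i.e.\ $[t]\star\dlog[t]=d[t]$ in $W_n\Omega^1(\mathbb{A}^1_S\setminus\{0\})$. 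This Yoneda-style argument is shorter and needs neither injectivity nor passage to fields; your argument is more uniform across the five operations but pays for that uniformity with the extra reduction step.
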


\begin{proof}
	The claim for $\star,F,V,\lambda$ follows from Proposition \ref{prop:ring-comparison}.
	Let us prove the claim for $d$.
	Define $\delta\colon (\mathbb{A}^1\setminus\{0\})^{q+1}\to (\mathbb{A}^1\setminus\{0\})^{q+2}$ by
	$(t_0,\dots,t_q)\mapsto (t_0,t_0,t_1,\dots,t_q)$.
	Then we have the following commutative diagram:
	$$
		\xymatrix{
			\mathbb{Z}_\tr(\mathbb{A}^1\setminus\{0\})^{\otimes (q+1)}\ar[r]^-\delta\ar@{->>}[d]	&\mathbb{Z}_\tr(\mathbb{A}^1\setminus\{0\})^{\otimes (q+2)}\ar@{->>}[d]\\
			h_0 (W_n^+\otimes \mathbb{G}_m^{+\otimes q})\ar[r]	^-d							&h_0 (W_n^+\otimes \mathbb{G}_m^{+\otimes (q+1)}).
		}
	$$
	Therefore it suffices to show that $\delta^*\eta = d\eta$ holds as a section of $W_n\Omega^{q+1}$ on $(\mathbb{A}^1\setminus\{0\})^{q+1}$.
	This follows from the equality $[t]\star \dlog[t]=d[t]$ in $W_n\Omega^1(\mathbb{A}^1\setminus\{0\})$.
\end{proof}

\begin{lemma}\label{lem:theta_of_eta}
	We have $\theta(\eta) = \id$, where the right hand side denotes the section of $a_\Nis h_0(W_n^+\otimes\mathbb{G}_m^{+\otimes q})$ on $(\mathbb{A}^1\setminus\{0\})^{q+1}$ represented by the identity morphism.
\end{lemma}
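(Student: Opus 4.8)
The plan is to evaluate $\theta$ on the section $\eta=[t_0]\star\dlog[t_1]\star\cdots\star\dlog[t_q]$ of $W_n\Omega^q$ over $(\mathbb{A}^1_S\setminus\{0\})^{q+1}$ (with $t_0,\dots,t_q$ the coordinate functions) by using that $\theta$ is a morphism of Witt complexes, and then to recognize the resulting class in $h_0(W_n^+\otimes\mathbb{G}_m^{+\otimes q})$ as the tautological section. Since $W_n\Omega^q$ has global injectivity (\S\ref{sec:dRWitt}) and $a_\inj a_\Nis h_0(W_n^+\otimes\mathbb{G}_m^{+\otimes q})$ has global injectivity as well — it is $a_\inj a_\Nis$ of a reciprocity presheaf by Lemma \ref{omega_shriek_CI_is_RSC} and Lemma \ref{global_inj} — it suffices to compare the two sections after restriction to the generic points of $(\mathbb{A}^1_S\setminus\{0\})^{q+1}$. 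Working Zariski-locally on $S$ we may assume $S$ affine, so that these generic points are $\Spec K$ for fields $K$ pro-smooth over $S$; writing $\Spec K$ as a cofiltered limit of affine smooth $S$-schemes and using that $\omega_!$, $\Lcube$ and the operations $\star,d,F,V,\lambda$ all commute with the relevant filtered colimits, Theorem \ref{thm:theta} and Corollary \ref{cor:p_typical_comparison} remain valid over $K$: $\theta$ is compatible with $\star,d,F,V,\lambda$ on sections over $K$, and $\lambda=(\varphi^{(p)})^{-1}$ sends $[a]\in W_n(K)$ to the motivic Teichm\"uller class $[a]$ for every $a\in K^\times$.

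Next I rewrite $\eta$ using only the Witt complex operations. For a unit $u$ of a commutative differential graded algebra one has $\dlog u=u^{-1}\,du$, and Teichm\"uller lifts are multiplicative with $\lambda=\id$ on $W_n\Omega^0$; hence in $W_n\Omega^q(K)$
\[
	\eta=\lambda[t_0]\star\big(\lambda[t_1^{-1}]\star d\lambda[t_1]\big)\star\cdots\star\big(\lambda[t_q^{-1}]\star d\lambda[t_q]\big).
\]
Applying $\theta$ and using that it commutes with $\star$, $d$ and $\lambda$, and then invoking $\lambda=(\varphi^{(p)})^{-1}$ together with $\varphi^{(p)}([a])=[a]$ for units (Corollary \ref{cor:p_typical_comparison}), one obtains
\[
	\theta(\eta)=[t_0]\star\big([t_1]^{-1}\star d[t_1]\big)\star\cdots\star\big([t_q]^{-1}\star d[t_q]\big)\in h_0(W_n^+\otimes\mathbb{G}_m^{+\otimes q})(K),
\]
where now $[t_i]$ denotes the motivic Teichm\"uller class, i.e. the class of the morphism $t_i\colon\Spec K\to\mathbb{A}^1_S\setminus\{0\}$ into the $\widehat{\mathbb{W}}^+_{p^{n-1}}$-factor; this class is multiplicative because the multiplication $\star$ on $W_n^+$ is induced by $\mu\colon\mathbb{A}^1_S\times_S\mathbb{A}^1_S\to\mathbb{A}^1_S$, so in particular $[t_i]^{-1}=[t_i^{-1}]$.

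Finally I compute this class directly. By Definition \ref{def:motivic_operations} the differential $d$ on $h_0(W_n^+\otimes\mathbb{G}_m^{+\otimes\ast})$ is induced by $\Delta\colon\mathbb{A}^1_S\setminus\{0\}\to(\mathbb{A}^1_S\setminus\{0\})^2$, $x\mapsto(x,x)$, so $d[t_i]$ is the class of the morphism $(t_i,t_i)\colon\Spec K\to(\mathbb{A}^1_S\setminus\{0\})^2$ with the first coordinate landing in the $W_n^+$-slot and the second in a newly appended $\mathbb{G}_m^+$-slot. Since $\star$ multiplies the $W_n^+$-slots via $\mu$ and merely appends $\mathbb{G}_m^+$-slots — so that no two $\mathbb{G}_m^+$-slots are ever transposed past one another and no sign from the graded commutativity of $\mathbb{G}_m^+$ (Lemma \ref{Gm_anti_commutative}) intervenes — one finds successively that $[t_i]^{-1}\star d[t_i]$ is the class of $(1,t_i)$ and then that $[t_0]\star\big([t_1]^{-1}\star d[t_1]\big)\star\cdots\star\big([t_q]^{-1}\star d[t_q]\big)$ is the class of $(t_0,t_1,\dots,t_q)\colon\Spec K\to(\mathbb{A}^1_S\setminus\{0\})^{q+1}$. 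This is precisely the restriction to $\Spec K$ of the section $\id$, which proves the claim. I expect the last step to be the main obstacle: one has to unwind the definitions of $\star$ and $d$ on $h_0(W_n^+\otimes\mathbb{G}_m^{+\otimes\ast})$ carefully enough — keeping track of which tensor slot each coordinate occupies and checking that no sign arises — to see that the composite of these operations sends this product of Teichm\"uller classes and differentials to the class of the identity morphism.
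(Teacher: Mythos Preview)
Your proof is correct and follows essentially the same approach as the paper: rewrite $\eta$ in terms of the Witt complex operations $\lambda,\star,d$, push through $\theta$ using that it is a morphism of Witt complexes, and then unwind the definitions of $\star$ and $d$ on the motivic side to recognize the identity section. The paper groups the Teichm\"uller factors slightly differently, writing $\eta=[t_0t_1^{-1}\cdots t_q^{-1}]\star d[t_1]\star\cdots\star d[t_q]$, and works directly over $(\mathbb{A}^1_A\setminus\{0\})^{q+1}$ after reducing to $S=\Spec A$; your passage to generic points via global injectivity is correct but unnecessary, since the computation already goes through on the nose over the full scheme.
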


\begin{proof}
	The section $\eta$ can be written as
	$$
		\eta = [t_0t_1^{-1}\dots t_q^{-1}]\star d[t_1]\star\dots\star d[t_q]
	$$
	where $t_0,\dots,t_q$ are the coordinate functions of $(\mathbb{A}^1\setminus\{0\})^{q+1}$.
	Since $\theta$ is a morphism of Witt complexes, $\theta(\eta)$ is given by the same formula in $h_0(W_n^+\otimes \mathbb{G}_m^{+\otimes q})((\mathbb{A}^1\setminus\{0\})^{q+1})$.
	The element $[t_0t_1^{-1}\dots t_q^{-1}]$ is represented by the morphism
	$$
		t_0t_1^{-1}\dots t_q^{-1}\colon (\mathbb{A}^1\setminus\{0\})^{q+1}\to \mathbb{A}^1\setminus\{0\}
	$$
	and the element $d[t_i]$ is represented by the morphism
	$$
		(\mathbb{A}^1\setminus\{0\})^{q+1}\xrightarrow{t_i}\mathbb{A}^1\setminus\{0\}\xrightarrow{\Delta} (\mathbb{A}^1\setminus\{0\})^2.
	$$
	By definition of the multiplication on $h_0(W_n^+\otimes \mathbb{G}_m^{+\otimes *})$, we see that $\theta(\eta)$ is represented by the identity morphism.
\end{proof}

\begin{theorem}\label{thm:rep-WOmega}
Let $k$ be a perfect field of characteristic $p\geq 3$.
Then there is an isomorphism
$$
	\eta\colon a_{\Nis} h_0 (W_n^+ \otimes \mathbb{G}_m^{+\otimes q})\xrightarrow{\sim}W_n\Omega^q
$$
in $\Sh_\Nis(\Cor_k)$.
\end{theorem}

\begin{proof}
	Lemma \ref{lem:theta_of_eta} shows that $\theta\circ \eta=\id$.
	Let $X=\Spec A\in \Sm_k^\aff$.
	Since $W_n\Omega^q(A)$ is initial in the category of Witt complexes over $A$, and $\eta\circ \theta$ gives an endomorphism of $W_n\Omega^q(A)$ as a Witt complex over $A$ by Lemma \ref{eta_compatibility}, we get $\eta\circ \theta=\id$.
\end{proof}

\printbibliography

\end{document}